\newtheorem{thm}{Theorem}
\newtheorem{prop}[thm]{Proposition} 
\newtheorem{la}[thm]{Lemma}
\newtheorem{cor}[thm]{Corollary}
\theoremstyle{definition}
\newtheorem{df}[thm]{Definition}
\newtheorem{notat}[thm]{Notation}
\newtheorem{prob}[thm]{Problem}
\theoremstyle{remark} 
\newtheorem{ex}[thm]{Example} 
\newtheorem{rmk}[thm]{Remark}
\newenvironment{ls}{\begin{itemize}}{\end{itemize}}
\newenvironment{lsnum}{\begin{enumerate}}{\end{enumerate}}
\newenvironment{pf}{\begin{proof}}{\end{proof}}
\newcommand{\ger}[1]{\ensuremath{\mathfrak {#1}}}
\newcommand{\scr}[1]{\ensuremath{\mathcal {#1}}}
\newcommand{\bld}[1]{\ensuremath{\mathbf {#1}}}
\newcommand{\bbb}[1]{\ensuremath{\mathbb {#1}}}
\renewcommand{\phi}{\varphi}
\newcommand{\sq}[1]{\ensuremath{\langle#1\rangle}}
\newcommand{\restr}{\mathop{\upharpoonright}}
\newcommand{\notarrow}{\kern .42em\not\kern -.42em\longrightarrow}
\renewcommand{\th}{\ensuremath{{}^{\text{th}}}}
\newcommand{\sm}{{\text-}\!\!\sum}
\newcommand{\al}{\alpha}
\newcommand{\om}{\omega}
\newcommand{\vp}{\varphi}
\newcommand{\sse}{\subseteq}
\newcommand{\contains}{\supseteq}
\newcommand{\forces}{\Vdash}
\DeclareMathOperator{\ran}{ran}
\newcommand{\re}{\restriction}
\newcommand{\bP}{\mathbb{P}}
\newcommand{\ra}{\rightarrow}
\newcommand{\Lra}{\Longrightarrow}
\newcommand{\Llra}{\Longleftrightarrow}
\newcommand{\emptyse}{0}
\newcommand{\fin}{\mathrm{Fin}}
\newcommand{\Fin}{\mathrm{Fin}}
\newcommand{\A}{{\mathscr{A}}}
\newcommand{\F}{{\mathcal{F}}}
\newcommand{\GG}{{\mathcal{G}}}
\newcommand{\VV}{{\mathcal{V}}}
\newcommand{\Pset}{\mathcal{P}}
\newcommand{\restrict}{\mathord{\upharpoonright}}
\newcommand{\U}{{\mathcal{U}}}
\newcommand{\ZFC}{\mathrm{ZFC}}
\newcommand{\BB}{\mathcal{B}}
\newcommand{\cube}{{\left[\omega\right]}^{\omega}}
\newcommand{\V}{{\mathbf{V}}}
\newcommand{\XX}{{\mathcal{X}}}
\newcommand{\E}{{\mathcal{E}}}
\renewcommand{\c}{\mathfrak{c}}
\newcommand{\lc}{\left|}
\newcommand{\rc}{\right|}
\newcommand{\I}{{\mathcal{I}}}
\newcommand{\J}{{\mathcal{J}}}
\newcommand{\VP}{{\mathbf{V}}^{\bbb{P}}}
\newtheorem{claim}[thm]{Claim}
\theoremstyle{definition}
\newtheorem{defn}[thm]{Definition}
\theoremstyle{remark}
\newcommand{\noprint}[1]{\relax}
\title{The next best thing to a P-point} \author{Andreas Blass}
\address{Department of Mathematics\\
  University of Michigan\\
  Ann Arbor, MI 48109--1043, U.S.A.}  \email{ablass@umich.edu}
\urladdr{\url{http://www.math.lsa.umich.edu/~ablass}} \thanks{Blass
  was partially supported by NSF grant DMS-0653696.}  \author{Natasha
  Dobrinen}
\address{Department of Mathematics\\
  University of Denver \\
  2360 Gaylord St.\\ Denver, CO \ 80208 U.S.A.}
\email{natasha.dobrinen@du.edu}
\urladdr{\url{http://web.cs.du.edu/~ndobrine}} \thanks{Dobrinen was
  partially supported by the Fields Institute and the National Science
  Foundation during the Thematic Program in Forcing and Its
  Applications 2012, and by a Simons Foundation Collaborative Grant}
\author{Dilip Raghavan}
\address{Department of Mathematics\\
  National University of Singapore\\
  Singapore 119076} \email{raghavan@math.nus.edu.sg}
\urladdr{\url{http://www.math.toronto.edu/~raghavan}} \thanks{Raghavan
  was partially supported by a National University of Singapore
  research grant and by the Fields Institute during its Thematic
  Program on Forcing and Its Applications, 2012} 
\begin{document}

\begin{abstract}
We study ultrafilters on $\omega^2$ produced by forcing with the
quotient of $\scr P(\omega^2)$ by the Fubini square of the Fr\'echet
filter on $\omega$.  We show that such an ultrafilter is a weak P-point
but not a P-point and that the only non-principal ultrafilters strictly
below it in the Rudin-Keisler order are a single isomorphism class of
selective ultrafilters.  We further show that it enjoys the strongest
square-bracket partition relations that are possible for a
non-P-point.  We show that it is not basically generated but
that it shares with basically generated ultrafilters the property of
not being at the top of the Tukey ordering.  In fact, it is not
Tukey-above $[\omega_1]^{<\omega}$, and it has only continuum many
ultrafilters Tukey-below it.  A tool in our proofs is the
analysis of similar (but not the same) properties for ultrafilters
obtained as the sum, over a selective ultrafilter, of non-isomorphic
selective ultrafilters.  
\end{abstract}

\maketitle


\section{Introduction} \label{intro} 

Quotients of the form $\Pset(\omega) \slash \I$ where $\I$ is an
analytic ideal on $\omega$ are referred to as analytic quotients.
Analytic quotients have been well studied in the literature (see
\cite{mazur}, \cite{stevogap1}, and \cite{ilijasthesis}).  These
studies have usually focused on the structure of gaps in such
quotients or on lifting isomorphisms between $\Pset(\omega) \slash \I$
and $\Pset(\omega) \slash \J$, topics that are closely related.
$\Pset(\omega) \slash \I$ is a Boolean algebra, and hence is a notion
of forcing.\footnote{Strictly speaking, the notion of forcing is the
  Boolean algebra minus its zero element; such deletions or additions
  of zero elements will be tacitly assumed wherever necessary.}
If forcing with an analytic quotient $\Pset(\omega) \slash \I$ does
not add any new subset of $\omega$, then the generic filter it adds is
in fact an ultrafilter on $\omega$ that is disjoint from $\I$.
Ultrafilters added by analytic quotients have not been as extensively
investigated, except for the most familiar analytic quotient
$\Pset(\omega) \slash \Fin$, which adds a selective ultrafilter.
Recall the following definitions.
\begin{df}
  An ultrafilter \scr U on $\omega$ is \emph{selective} if, for every
  function $f:\omega\to\omega$, there is a set $A\in\scr U$ on which
  $f$ is either one-to-one or constant.  It is a \emph{P-point} if,
  for every $f:\omega\to\omega$, there is $A\in\scr U$ on which $f$ is
  finite-to-one or constant.
\end{df}
Indeed, selective ultrafilters can be completely characterized in
terms of genericity over $\Pset(\omega) \slash \Fin$ -- a well known
theorem of Todorcevic (\cite{carlosstevo}) states that in the presence
of large cardinals an ultrafilter on $\omega$ is selective if and only if it is
$\left( \mathbf{L}(\mathbb{R}), \Pset(\omega) \slash \Fin
\right)$-generic.  
Similar characterizations were recently shown for a large class of
ultrafilters forming a precise hierarchy above selective ultrafilters
(see \cite{Dobrinen/TodorcevicPart1}, \cite{Dobrinen/TodorcevicPart2},
and \cite{Mijares/Nieto}). 

The generic ultrafilter added by $\Pset(\omega) \slash \Fin$ has a
simple Rudin-Keisler type as well as a simple Tukey type.  Let $\F$ be
a filter on a set $X$ and $\GG$ a filter on a set $Y$.  Recall that we
say that $\F$ is \emph{Rudin-Keisler(RK) reducible to} $\GG$ or
\emph{Rudin-Keisler(RK) below} $\GG$, and we write $ \F \; {\leq}_{RK}
\; \GG$, if there is a map $f: Y \rightarrow X$ such that for each $a
\subseteq X$, $a \in \F$ iff ${f}^{-1}(a) \in \GG$.  $\F$ and $\GG$
are \emph{RK equivalent}, written $\scr F\equiv_{RK}\scr G$, if $\F \;
{\leq}_{RK} \; \GG$ and $\GG \; {\leq}_{RK} \; \F$.
If $\scr F$ and $\scr G$ are ultrafilters, then $\scr F\equiv_{RK}\scr
G$ if and only if there is a permutation $f:\omega\to\omega$ such that
$\scr F=\{a\subseteq\omega:f^{-1}(a)\in\scr G\}$.  For this reason,
ultrafilters that are RK equivalent are sometimes said to be \emph{(RK)
  isomorphic}.
It is well-known that selective ultrafilters are
minimal in the Rudin-Keisler ordering, meaning that any ultrafilter
that is RK below a selective ultrafilter is RK equivalent to that
selective ultrafilter.

The Tukey types of selective ultrafilters have also been completely
characterized.  We say that a poset $\langle D, \leq \rangle$ is
\emph{directed} if any two members of $D$ have an upper bound in $D$.
A set $X \subseteq D$ is \emph{unbounded in $D$} if it doesn't have an
upper bound in $D$.  A set $X \subseteq D$ is said to be \emph{cofinal
  in $D$} if $\forall y \in D\, \exists x \in X \left[y \leq x
\right]$.  Given directed sets $D$ and $E$, a map $f: D \rightarrow E$
is called a \emph{Tukey map} if the image of every unbounded subset of
$D$ is unbounded in $E$.  A map $g: E \rightarrow D$ is called a
\emph{convergent map} if the image of every cofinal subset of $E$ is
cofinal in $D$.  It is not difficult to show that there is a Tukey map
$f: D \rightarrow E$ if and only if there is a convergent $g: E
\rightarrow D$.  When this situation obtains, we say that $D$ is
\emph{Tukey reducible} to $E$, and we write $D \; {\leq}_{T} \; E$.
The relation ${\leq}_{T}$ is a quasi order, so it induces an
equivalence relation in the usual way: $D \; {\equiv}_{T} \; E$ if and
only if both $D \; {\leq}_{T} \; E$ and $E \; {\leq}_{T} \; D$ hold.
If $D \; {\equiv}_{T} \; E$, we say that $D$ and $E$ are \emph{Tukey
  equivalent} or have the same \emph{cofinal type}.  It is worth
noting that if $\kappa$ is an infinite cardinal and if $D$ is a
directed set of size $\kappa$, then $D \; {\leq}_{T} \; \langle
{\left[\kappa\right]}^{< \omega}, \subseteq \rangle$.  If $\U$ is any
ultrafilter, then $\langle \U, \supseteq \rangle$ is a directed set.
When ultrafilters are viewed as directed sets in this way, Tukey
reducibility is a coarser quasi order than RK reducibility.  It is
also worth noting that there is always an ultrafilter $\U$ on $\omega$
such that $\U \; {\equiv}_{T} \; \langle {\left[\c\right]}^{< \omega},
\subseteq \rangle$; every ultrafilter on $\omega$ is Tukey below this
$\U$.  In \cite{tukey}, it is shown that selective ultrafilters are
Tukey minimal.  Moreover, if $\U$ is selective and $\VV$ is any
ultrafilter that is Tukey below $\U$, then $\VV \; {\equiv}_{RK} \;
{\U}^{\alpha}$, for some $\alpha < {\omega}_{1}$ (see \cite{tukey} for
the definition of ${\U}^{\alpha}$).  Similar results have been proved
in \cite{Dobrinen/TodorcevicPart1} and \cite{Dobrinen/TodorcevicPart2}
for a large class of rapid p-points forming a hierarchy of
ultrafilters Tukey above selective ultrafilters.

On the other hand, the situation is not as clear for ultrafilters
added by other analytic quotients; neither their Rudin-Keisler type
nor their Tukey type has been well-studied.  In this paper we
investigate the generic ultrafilter added by a specific Borel
quotient.  In order to describe it, we introduce some preliminary
notation.
\begin{df} \label{df:misc} For $p \subseteq {\omega}^{2}$ and $n \in
  \omega$, let $p(n) = \{m \in \omega: \langle n, m \rangle \in p\}$.
  For $x \in {}^{\omega}{\left( \Pset(\omega) \right)}$ and $a
  \subseteq \omega$, $x \restrict a = \{\langle n, m \rangle \in
  {\omega}^{2}: n \in a \ \text{and} \ m \in x(n)\}$.  
According to these conventions, we notationally identify a set
$p\subseteq\omega^2$ with the sequence of its sections $p(n)$;
conversely, we shall sometimes notationally identify a sequence  $x \in
{}^{\omega}{\left( \Pset(\omega) \right)}$ and the corresponding set
$x\restr\omega$.
  ${\pi}_{1}$ is the projection of ${\omega}^{2}$ to the first
  co-ordinate and ${\pi}_{2}$ is projection to the second.  More
  formally, for $\langle n, m \rangle \in {\omega}^{2}$,
  ${\pi}_{1}(\langle n, m \rangle) = n$ and ${\pi}_{2}(\langle n, m
  \rangle) = m$.
 \end{df}
 \begin{df} \label{df:filtersum} Let $\U$ and $\scr V_n$ for $ n \in
   \omega$ be filters on $\omega$.
We define the \emph{$\U$-indexed
     sum of the ${\VV}_{n}$'s} as
 \begin{align*}
   \U \sm_n
   {\VV}_{n}=\{X\subseteq\omega^2:\{n\in\omega:X(n)\in{\VV}_{n}\}
   \in\U\}.
 \end{align*}
In the case when all the ultrafilters $\scr V_n$ are the same \scr V,
we write $\scr U\otimes\scr V$ for $\scr U\sm_n\scr V_n$.  If,
furthermore, $\scr V=\scr U$, then we abbreviate $\scr U\otimes\scr U$
as $\scr U^{\otimes2}$.

 For a set $A$, the \emph{Fr\'echet ideal on $A$} is $\{B \subseteq A:
 B \ \text{is finite}\}$ and the \emph{Fr\'echet filter on $A$} is
 $\{B \subseteq A: A - B \ \text{is finite}\}$.  $\F$ denotes the
 \emph{Fr\'echet filter} on $\omega$.

 For a filter $\GG$ on a set $X$, ${\GG}^{\ast}$ is the \emph{dual
   ideal to $\GG$} -- that is ${\GG}^{\ast} = \{X - a: a \in \GG\}$.
 ${\GG}^{+} = \Pset(X) - {\GG}^{\ast}$.
\end{df}
The ideal $\I = {\left( {\F}^{\otimes2} \right)}^{\ast}$ is a
${F}_{\sigma\delta\sigma}$ ideal, and it is easy to show that it is
not ${G}_{\delta\sigma\delta}$.  We will abuse notation and write the
quotient $\Pset({\omega}^{2}) \slash \I$ as $\Pset({\omega}^{2}) /
{\F}^{\otimes2}$.  Forcing with the Boolean algebra
$\Pset({\omega}^{2}) / {\F}^{\otimes2}$ is equivalent to forcing with
${\left( {\F}^{\otimes2} \right)}^{+}$ ordered by inclusion.  This
quotient does not add any new reals (see Lemma \ref{la:noreals}), and
hence adds an ultrafilter on ${\omega}^{2}$.  It is not hard to see
that this ultrafilter is not a P-point; indeed, the first projection
$\pi_1$ is neither finite-to-one nor constant on any set in this
ultrafilter (or even on any set in $(\scr F^{\otimes2})^+$).

In Section \ref{sec:gen1}, we study some combinatorial properties of
the generic ultrafilter added by $\Pset({\omega}^{2}) /
{\F}^{\otimes2}$.  We determine the extent to which the generic
ultrafilter possesses a certain partition property which has been
investigated by Galvin and Blass (\cite{mod-view}) before.  An
ultrafilter \scr W on a countable set $S$ is said to be \emph{$(n,
  h)$-weakly Ramsey}, where $n$ and $h$ are natural numbers, if for
every partition of ${\left[S\right]}^{n}$ into finitely many pieces,
there is $W \in \scr W$ such that ${\left[W\right]}^{n}$ intersects at
most $h$ pieces of the partition.  For $n \geq 2$, the property of
being $(n, 1)$-weakly Ramsey is equivalent to being selective.
Moreover, for each $n \geq 2$, there is a largest natural number $T(n)$ such
that, provably in ZFC, if an ultrafilter is $(n, T(n) - 1)$-weakly Ramsey, then it is a
P-point.
Theorem \ref{wk-part-gen} shows that the generic ultrafilter added by
$\Pset({\omega}^{2}) / {\F}^{\otimes2}$ is $(n, T(n))$-weakly Ramsey.
Thus the generic ultrafilter satisfies the strongest partition
property which a non-P-point is able to satisfy.  Furthermore, the
location of the generic ultrafilter in the Rudin-Keisler ordering is
fully determined.
It is shown that all non-principal ultrafilters
that are strictly RK below the generic ultrafilter $\GG$ are RK
equivalent to ${\pi}_{1}(\GG)$.  We also show in Section
\ref{sec:gen1} that the generic ultrafilter is a weak P-point.

In Sections \ref{sec:gen2} and \ref{sec:nicemaps} we prove
canonization theorems for monotone maps from the generic ultrafilter
to $\Pset(\omega)$.  Let $D$ and $E$ be directed sets.  A map $f: E
\rightarrow D$ is called \emph{monotone} if
\begin{align*}
  \forall {e}_{0}, {e}_{1} \in E \,\left[{e}_{0} \leq {e}_{1} \implies
    f({e}_{0}) \leq f({e}_{1})\right].
\end{align*} 
$f$ is said to be \emph{cofinal in $D$} if $\forall d \in D \exists e
\in E \left[d \leq f(e)\right]$.  It is clear that if $f$ is monotone
and cofinal in $D$, then $f$ is convergent.  It can be checked that if
$\U$ is an ultrafilter and $D$ is any directed set such that $\U \;
{\leq}_{T} \; D$, then there is a map from $D$ to $\U$ which is
monotone and cofinal in $\U$.  Therefore, understanding monotone maps
from the generic ultrafilter to $\Pset(\omega)$ is necessary to
analyze its Tukey type.

The first canonization of monotone maps from an ultrafilter to
$\Pset(\omega)$ was done for the basic ultrafilters by Dobrinen and
Todorcevic in \cite{dt}.  The crucial notion of a basic poset was
identified by Solecki and Todorcevic in \cite{basic}, who showed the
importance of this concept for the Tukey theory of definable ideals.
Dobrinen and Todorcevic~\cite{dt} proved that within the category of
ultrafilters on $\omega$, the basic posets are precisely the P-points.
Their canonization result, showing that  monotone maps on P-points are
continuous on some filter base,  allowed them to prove that 
there are only continuum many ultrafilters that are Tukey below any
fixed P-point.  They also introduced a weakened version of the
property of being basic, which they called being basically generated.
\begin{df} \label{df:base} Let $\U$ be an ultrafilter.  A set $\BB
  \subseteq \U$ is said to be a \emph{base} for $\U$ if $\forall a \in
  \U\, \exists b \in \BB\,\left[b \subseteq a\right]$.
\end{df}
\begin{df} \label{df:basicallygenerated} Let $\U$ be an ultrafilter on
  a countable set $X$.  We say that $\U$ is \emph{basically generated}
  if there is a base $\BB \subseteq \U$ with the property that for
  every $\langle {b}_{n}: n \in \omega \rangle \subseteq \BB$ and $b
  \in \BB$, if $\langle {b}_{n}: n \in \omega \rangle$ converges to
  $b$, then there exists $X \in \cube$ such that ${\bigcap}_{n \in
    X}{{b}_{n}} \in \U$.  Here convergence is with respect to the
  natural topology on $\Pset(X)$ induced by identifying $\Pset(X)$
  with the product space ${2}^{X}$, with $2 = \{0, 1\}$ having the
  discrete topology.
\end{df}
It is not hard to see that if $\U$ is basically generated, then
${\left[{\omega}_{1}\right]}^{< \omega} \; {\not\leq}_{T} \; \U$.
Dobrinen and Todorcevic~\cite{dt} pointed out that all the
ultrafilters obtained  by closing the class of P-points under the
operation of taking sums as in Definition \ref{df:filtersum} are
basically generated by a base that is also closed under finite
intersections.

In \cite{tukey} a canonization theorem was proved for monotone maps to
$\Pset(\omega)$ from any ultrafilter that is basically generated by a
base that is closed under finite intersections.  This canonization
result was used in \cite{tukey} to prove that there are only continuum
many ultrafilters that are Tukey below any fixed ultrafilter that is
basically generated by a base that is closed under finite
intersections.  The question of whether there are any ultrafilters
that are not above ${\left[ {\omega}_{1} \right]}^{< \omega}$ and also
not basically generated was left open in both \cite{dt} and
\cite{tukey}.

The canonization results in Sections \ref{sec:gen2} and
\ref{sec:nicemaps} imply that the generic ultrafilter added by
$\Pset({\omega}^{2}) / {\F}^{\otimes2}$ has only continuum many
ultrafilters Tukey below it.  In particular it is not of the maximal
possible Tukey type, ${\left[ \c \right]}^{< \omega}$.  This is
strengthened in Section \ref{sec:gen2} to prove that the generic
ultrafilter is not Tukey above ${\left[{\omega}_{1}\right]}^{<
  \omega}$.  The canonization obtained in Theorem \ref{thm:nicemaps}
is used in Theorem \ref{thm:canonical} to show that any Tukey
reduction from the generic ultrafilter $\GG$ to an arbitrary
ultrafilter $\VV$ can be replaced with a \emph{Rudin-Keisler}
reduction from an associated filter $\GG(P)$ to $\VV$.  This is an
exact analogue of Theorem 17 of \cite{tukey}, which was established
there for all ultrafilters that are basically generated by a base that
is closed under finite intersections.  These results show that the
generic ultrafilter added by $\Pset({\omega}^{2}) / {\F}^{\otimes2}$
has much in common with basically generated ultrafilters.  However, we
show in Section \ref{sec:notbasicallygenerated} that it fails to be
basically generated.  Thus this generic ultrafilter provides the first
known example of an ultrafilter that is not basically generated and is
not of the maximal possible Tukey type.
\begin{prob}
 Investigate the Rudin-Keisler and Tukey types of ultrafilters added
 by forcing with other analytic quotients.
\end{prob}

\section{Sums of Selective Ultrafilters} \label{sum}

Although the primary topic of this paper is the study of generic
ultrafilters obtained by forcing with $\scr P(\omega^2)/\scr
F^{\otimes2}$, this study will make much use of certain other
ultrafilters on $\omega^2$, namely sums of non-isomorphic selective
ultrafilters indexed by another selective ultrafilter.  The present
section is devoted to the study of such sums.  

The results in Sections~\ref{sum} and \ref{sec:gen1} are due to
Blass. Most of them were presented in a lecture at the Fields
Institute in September, 2012, but Theorem~\ref{wpp} was obtained later
in 2012.

\subsection{Definition and Basic Facts}

Throughout this section, $\scr U$ and $\scr V_n$ for $n\in\omega$ are
pairwise non-isomorphic selective ultrafilters on $\omega$.  Our
primary subject here will be the $\scr U$-indexed sum of the $\scr
V_n$'s, defined above as 
\[
\scr U\sm_n\scr V_n=\{X\subseteq\omega^2:\{n\in\omega:X(n)\in\scr
V_n\} \in\scr U\}.
\]
Recall that $X(n)$ denotes the $n\th$ vertical section of $X$,
$\{r:\sq{n,r}\in X\}$.  So a set $X$ is in $\scr U\sm_n\scr V_n$ if
and only if its vertical sections $X(n)$ are in the corresponding
ultrafilters $\scr V_n$ for \scr U-almost all $n$.  

\begin{rmk}
  The sum $\scr U\sm_n\scr V_n$ would be unchanged if we replaced the
  ultrafilters $\scr V_n$ by arbitrary other ultrafilters $\scr V'_n$
  for a set of $n$'s that is not in \scr U.  Thus, what we say in this
  section about pairwise non-isomorphic selective ultrafilters would
  remain true under the weaker assumption that \scr U is selective and
  that \scr U-almost all of the $\scr V_n$ are selective and not
  isomorphic to each other (or to \scr U).
\end{rmk}

The following lemma collects some elementary facts about sums of
ultrafilters.
The proofs are omitted because they amount to just
inspection of the definition.
\begin{la}      \label{pi1}
  \begin{lsnum}
\item $\scr U\sm_n\scr V_n$ is an ultrafilter on $\omega^2$.
 \item  $\scr U\sm_n\scr V_n$ contains the sets
  \[
\{\sq{x,y}\in\omega^2:n<x\text{ and }f(x)<y\}
\]
 for all $n\in\omega$
  and all $f:\omega\to\omega$. 
\item The projection $\pi_1:\omega^2\to\omega$ to the first factor
  sends $\scr U\sm_n\scr V_n$ to \scr U.
\item The projection $\pi_1$ is neither finite-to-one nor constant on
  any set in $\scr U\sm_n\scr V_n$.  Thus $\scr U\sm_n\scr V_n$ is not
  a P-point.
  \end{lsnum}
\end{la}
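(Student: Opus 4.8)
The plan is to verify each of the four clauses by directly unwinding Definition~\ref{df:filtersum}, using only that $\scr U$ and the $\scr V_n$ are non-principal ultrafilters; selectivity plays no role beyond non-principality.

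For clause~(1) I would first record the pointwise identities $(X\cap Y)(n)=X(n)\cap Y(n)$ and $(\omega^2\setminus X)(n)=\omega\setminus X(n)$, together with the monotonicity $X\subseteq Y\implies X(n)\subseteq Y(n)$. Upward closure of $\scr U\sm_n\scr V_n$ then follows because $X\mapsto\{n:X(n)\in\scr V_n\}$ is monotone and $\scr U$ is upward closed; closure under finite intersection follows because each $\scr V_n$ and $\scr U$ are closed under finite intersection; and $\emptyset\notin\scr U\sm_n\scr V_n$ because $\emptyset(n)=\emptyset\notin\scr V_n$ makes the relevant index set empty. For the ultrafilter dichotomy I would fix $X\subseteq\omega^2$ and note that, since each $\scr V_n$ is an ultrafilter, the sets $\{n:X(n)\in\scr V_n\}$ and $\{n:(\omega^2\setminus X)(n)\in\scr V_n\}$ partition $\omega$; because $\scr U$ is an ultrafilter exactly one of them lies in $\scr U$, i.e. exactly one of $X,\omega^2\setminus X$ lies in $\scr U\sm_n\scr V_n$.

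For clause~(2), fix $n_0\in\omega$ and $f:\omega\to\omega$ and put $A=\{\sq{x,y}\in\omega^2:n_0<x\text{ and }f(x)<y\}$. Then $A(x)=\{y\in\omega:y>f(x)\}$ when $x>n_0$ and $A(x)=\emptyset$ otherwise; in particular $A(x)$ is cofinite, hence lies in $\scr V_x$, for every $x>n_0$. Thus $\{x:A(x)\in\scr V_x\}\supseteq\{x\in\omega:x>n_0\}$, a cofinite set, which lies in the non-principal ultrafilter $\scr U$; hence $A\in\scr U\sm_n\scr V_n$. Clause~(3) is the computation $\pi_1^{-1}(a)=a\times\omega$, whose $n$-th section is $\omega$ if $n\in a$ and $\emptyset$ if not; since $\omega\in\scr V_n$ and $\emptyset\notin\scr V_n$ this gives $\{n:(a\times\omega)(n)\in\scr V_n\}=a$, so $\pi_1^{-1}(a)\in\scr U\sm_n\scr V_n\iff a\in\scr U$, which is precisely the assertion that $\pi_1$ carries the sum to $\scr U$.

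For clause~(4), take any $X\in\scr U\sm_n\scr V_n$ and set $B=\{n:X(n)\in\scr V_n\}\in\scr U$. Since $\scr U$ is non-principal, $B$ is infinite, and since each $\scr V_n$ is non-principal, $X(n)$ is infinite for every $n\in B$. Hence the range of $\pi_1\restrict X$ contains the infinite set $B$, so $\pi_1$ is not constant on $X$; and the fiber $(\pi_1\restrict X)^{-1}(n)=\{n\}\times X(n)$ is infinite for each $n\in B$, so $\pi_1$ is not finite-to-one on $X$. Transporting $\pi_1$ to a function $\omega\to\omega$ through a bijection $\omega\cong\omega^2$ and comparing with the definition of a P-point yields that $\scr U\sm_n\scr V_n$ is not a P-point. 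None of this is a genuine obstacle: the only points deserving a moment's care are that \emph{both} layers of ultrafilters are needed for the dichotomy in clause~(1), and that it is the non-principality of $\scr U$ and of the $\scr V_n$ --- not their selectivity --- that drives clauses (2) and (4).
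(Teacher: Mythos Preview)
Your proof is correct and matches the paper's approach: the paper omits the proof entirely, remarking that it ``amount[s] to just inspection of the definition,'' and immediately afterward notes that the lemma ``used only that \scr U and the $\scr V_n$'s are non-principal ultrafilters''---precisely the observation you make at the end. Your clause-by-clause unwinding is exactly the intended verification.
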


Part (2) of the lemma implies in particular that $\scr U\sm_n\scr V_n$
contains the above-diagonal set $\{\sq{x,y}:x<y\}$, which we can identify
with the set $[\omega]^2$ of two-element subsets of $\omega$. 

The preceding lemma used only that \scr U and the $\scr V_n$'s are
non-principal ultrafilters.  The next lemma, which describes how
$\pi_2$ acts, uses our assumptions of selectivity and non-isomorphism.
It requires somewhat more work, so we give the proof even though it
has long been known.

\begin{la}      \label{pi2}
  There is a set in $\scr U\sm_n\scr V_n$ on which the projection
  $\pi_2:\omega^2\to\omega$ to the second factor is one-to-one.  Thus,
  $\scr U\sm_n\scr V_n$ is isomorphic to its $\pi_2$-image, which is
  the limit, with respect to \scr U, of the sequence \sq{\scr
    V_n:n\in\omega}. 
\end{la}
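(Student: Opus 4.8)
The plan is to build, by a diagonalization against the selective ultrafilters involved, a set $X \in \scr U\sm_n\scr V_n$ such that distinct vertical sections $X(m)$ and $X(n)$ are disjoint and, moreover, each $X(n)$ is chosen from $\scr V_n$ in such a way that $\pi_2 \restr X$ is injective. Concretely, I would first use that $\scr U$ is selective (hence rapid, or at least that we may thin out) together with part (2) of Lemma \ref{pi1} to pass to a set $A \in \scr U$ along which the $\scr V_n$ grow fast; then recursively pick for each $n \in A$ a set $Y_n \in \scr V_n$ so that $\min(Y_n)$ exceeds everything used so far. The nontrivial point is to get genuine pairwise disjointness of the sections simultaneously for $\scr U$-almost all $n$, not merely finite-to-one behavior of $\pi_2$.

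The clean way to arrange this is the standard fact that pairwise non-isomorphic selective (indeed, merely pairwise non-$\equiv_{RK}$) ultrafilters can be \emph{simultaneously separated}: there is a partition $\langle B_n : n \in \omega\rangle$ of $\omega$ with $B_n \in \scr V_n$ for every $n$. I would prove this by a back-and-forth/diagonal construction: enumerate all pairs $(n,k)$ and, since $\scr V_n \ne \scr V_k$ for $n \ne k$, fix a set $S_{n,k}$ with $S_{n,k} \in \scr V_n$ and $\omega \setminus S_{n,k} \in \scr V_k$; then for each $n$ let $B'_n = \bigcap_{k<n} S_{n,k} \cap \bigcap_{k<n}(\omega\setminus S_{k,n})$, which lies in $\scr V_n$ because $\scr V_n$ is a filter, and observe $B'_n \cap B'_k$ is disjoint from one of the defining sets, hence the $B'_n$ are already pairwise disjoint for indices large enough; discarding a $\scr V_n$-small finite remainder and absorbing leftover points into, say, $B_0$, gives an honest partition $\langle B_n : n\in\omega\rangle$ of $\omega$ with $B_n \in \scr V_n$. (This is exactly the classical argument behind the fact that a sum of pairwise non-isomorphic ultrafilters is isomorphic to its $\pi_2$-limit; I expect this separation step — getting the disjointness to hold for \emph{all} $n$ rather than almost all — to be the main obstacle, and it is handled purely by the filter property plus a harmless finite modification.)

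Given such a partition, set $X = \bigcup_{n\in\omega} (\{n\} \times B_n) = \{\langle n,m\rangle : m \in B_n\}$. Then $X(n) = B_n \in \scr V_n$ for every $n$, so $\{n : X(n)\in\scr V_n\} = \omega \in \scr U$, whence $X \in \scr U\sm_n\scr V_n$. Since the $B_n$ are pairwise disjoint, $\pi_2 \restr X$ is one-to-one: if $\langle n,m\rangle, \langle n',m\rangle \in X$ then $m \in B_n \cap B_{n'}$, forcing $n = n'$. This proves the first assertion.

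For the second assertion, restrict attention to sets below $X$ (legitimate, as $X \in \scr U\sm_n\scr V_n$) and let $\phi = \pi_2 \restr X$, a bijection from $X$ onto its image $Z = \bigcup_n B_n = \omega$. By the definition of the Rudin–Keisler order applied to the bijection $\phi$, $\scr U\sm_n\scr V_n$ is isomorphic to its pushforward $\phi_*(\scr U\sm_n\scr V_n)$, i.e.\ to $\{ a \subseteq \omega : \phi^{-1}(a) \in \scr U\sm_n\scr V_n\}$. It remains to identify this pushforward with $\lim_{\scr U} \scr V_n$, the set of $a \subseteq \omega$ such that $\{n : a \in \scr V_n\} \in \scr U$. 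But for $a \subseteq \omega$ we have $\phi^{-1}(a) = \{\langle n,m\rangle \in X : m \in a\}$, whose $n$-th section is $B_n \cap a$; and $\phi^{-1}(a) \in \scr U\sm_n\scr V_n$ iff $\{n : B_n \cap a \in \scr V_n\} \in \scr U$ iff $\{n : a \in \scr V_n\} \in \scr U$, the last equivalence because $B_n \in \scr V_n$ (so $B_n \cap a \in \scr V_n \Leftrightarrow a \in \scr V_n$). Hence $\phi_*(\scr U\sm_n\scr V_n) = \lim_{\scr U}\langle \scr V_n : n\in\omega\rangle$, completing the proof. $\qed$
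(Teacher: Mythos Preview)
Your overall strategy coincides with the paper's: reduce to finding pairwise disjoint sets $B_n\in\scr V_n$ and then take $X=\bigcup_n\{n\}\times B_n$. The computation at the end, identifying $\pi_2(\scr U\sm_n\scr V_n)$ with the $\scr U$-limit of the $\scr V_n$, is correct and in fact more detailed than the paper, which dismisses this as a routine verification.

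The gap is in your separation step. With $B'_n=\bigcap_{k<n}S_{n,k}\cap\bigcap_{k<n}(\omega\setminus S_{k,n})$ you do get $B'_n\in\scr V_n$, but the sets are \emph{not} pairwise disjoint in general. For $m<n$ you have $B'_n\subseteq S_{n,m}\cap(\omega\setminus S_{m,n})$, but the definition of $B'_m$ involves only indices $k<m$ and says nothing about $S_{n,m}$ or $S_{m,n}$; there is simply no constraint forcing $B'_m\cap B'_n=\varnothing$, not even eventually or modulo a finite set. The phrase ``pairwise disjoint for indices large enough'' has no justification, and the suggested finite clean-up cannot repair an infinite overlap. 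In fact, distinctness (or even pairwise non-isomorphism) of the $\scr V_n$ alone is not enough to separate countably many ultrafilters simultaneously: if $\scr V_0$ lies in the closure of $\{\scr V_n:n\geq1\}$ in $\beta\omega$, every $B_0\in\scr V_0$ belongs to infinitely many $\scr V_n$, so no choice of $B_n\in\scr V_n$ disjoint from $B_0$ is possible for those $n$.

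What the paper does, and what you are missing, is to invoke the P-point property of each $\scr V_n$. First pick $C_{n,m}\in\scr V_n\setminus\scr V_m$ for every $m\neq n$; then, because $\scr V_n$ is a P-point, choose a single $B_n\in\scr V_n$ almost included in every $C_{n,m}$. The crucial consequence is that $B_n\notin\scr V_m$ for \emph{all} $m\neq n$ (not just $m<n$), so $\omega\setminus B_m\in\scr V_n$ whenever $m\neq n$. Now the finite intersection $A_n=B_n\cap\bigcap_{m<n}(\omega\setminus B_m)$ lies in $\scr V_n$, and for $m<n$ one has $A_m\subseteq B_m$ while $A_n\subseteq\omega\setminus B_m$, giving genuine pairwise disjointness. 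Your finite-intersection attempt can be salvaged by inserting this P-point step; without it the argument does not go through.
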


\begin{pf}
We prove only the first statement, because the second, identifying the
image under $\pi_2$ with the limit in $\beta\omega$, is a
straightforward verification.

The first assertion will follow if we find pairwise disjoint sets
$A_n\subseteq\omega$ with each $A_n\in\scr V_n$, for then $\{\sq{n,y}:
n\in\omega\text{ and }y\in A_n\}$ is as required.  We obtain such sets
$A_n$ in a sequence of three steps.

For each $m\neq n$ in $\omega$, the ultrafilters $\scr V_m$ and $\scr
V_n$ are not isomorphic, so in particular they are distinct.  Thus,
there is a set $C_{n,m}\in\scr V_n$ that is not in $\scr V_m$.  

Because $\scr V_n$ is a P-point, it contains a set $B_n$ that is
almost included in $C_{n,m}$ for all $m\in\omega-\{n\}$.  Choose such
a $B_n$ for each $n$.  Thus, each $\scr V_n$ contains the $B_n$ with
the same subscript but the complements $\omega-B_m$ of all the other
$B_m$'s.  

Let 
\[
A_n=B_n\cap\bigcap_{m<n}(\omega-B_m).
\]
Then $A_n$ is the intersection of finitely many sets from $\scr V_n$,
so it is in $\scr V_n$.  For any $m<n$, $A_m$ and $A_n$ are clearly
disjoint, so the sets $A_n$ are as required.
\end{pf}

An amplification of this argument gives the following complete
classification, modulo $\scr U\sm_n\scr V_n$, of all functions
$f:\omega^2\to\omega$.

\begin{prop}
  If $f:\omega^2\to\omega$, then there is a set $X\in\scr U\sm_n\scr
  V_n$ such that $f\restr X$ is one of the following.
  \begin{ls}
    \item a constant function
\item $\pi_1$ followed by a one-to-one function
\item a one-to-one function
  \end{ls}
\end{prop}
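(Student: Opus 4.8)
The plan is to reduce the general case to the two lemmas just proved by splitting according to how $f$ behaves on the vertical sections. Given $f:\omega^2\to\omega$, for each $n$ consider the restriction $f\restr(\{n\}\times\omega)$, which we may regard as a function $f_n:\omega\to\omega$ via $f_n(r)=f(\sq{n,r})$. Since each $\scr V_n$ is selective, there is a set $A_n\in\scr V_n$ on which $f_n$ is either constant or one-to-one. By the remark after the statement of the section (or simply by passing to a $\scr U$-large set of $n$'s), partition $\omega$ into the set $N_c$ of those $n$ for which $f_n$ can be taken constant on $A_n$ and the set $N_i$ of those $n$ for which $f_n$ is one-to-one on $A_n$; exactly one of $N_c$, $N_i$ lies in $\scr U$, and we handle the two cases separately. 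In either case we will have shrunk to a set $X_0=\bigcup_n\{n\}\times A_n$, which lies in $\scr U\sm_n\scr V_n$ since its vertical sections are the $A_n\in\scr V_n$.

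First I would dispose of the case $N_c\in\scr U$. Discarding the $n\notin N_c$, we have a function $n\mapsto c_n$, where $c_n$ is the constant value of $f$ on $\{n\}\times A_n$; that is, $f$ factors through $\pi_1$ on $X_0$, namely $f(\sq{n,r})=c_n$ for $\sq{n,r}\in X_0$ with $n\in N_c$. Now apply selectivity of $\scr U$ to the function $n\mapsto c_n$: there is $B\in\scr U$ on which it is constant or one-to-one. If constant, then $f$ is constant on $X_0\cap(B\times\omega)\in\scr U\sm_n\scr V_n$; if one-to-one, then on that same set $f$ is $\pi_1$ followed by the one-to-one map $n\mapsto c_n$ on $B$. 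So in this branch we land in the first or second alternative.

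Next, the case $N_i\in\scr U$, where on each $\{n\}\times A_n$ (for $n\in N_i$) the map $f$ is injective. This is the case requiring the real work, analogous to Lemma~\ref{pi2}: one wants a single set in $\scr U\sm_n\scr V_n$ on which $f$ is globally one-to-one, which means arranging that the images $f[\{n\}\times A'_n]$ are pairwise disjoint for a $\scr V_n$-large shrinking $A'_n\subseteq A_n$. I expect this to be the main obstacle. The strategy is to mimic the three-step argument of Lemma~\ref{pi2}: push the problem over to the $\pi_2$-images. Concretely, since $f\restr(\{n\}\times A_n)$ is injective, it has an inverse, and the function $f$ induces on each $\{n\}\times A_n$ a bijection onto a set $R_n=f[\{n\}\times A_n]$; let $\scr W_n$ be the image of $\scr V_n$ under this bijection, a selective ultrafilter (isomorphic to $\scr V_n$). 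The $\scr W_n$ need not be pairwise non-isomorphic in general, but the point is different: two distinct nonprincipal ultrafilters still contain sets witnessing their distinctness, and more importantly, we only need to separate them when their generators would otherwise collide. Following Lemma~\ref{pi2}: for $m\neq n$ pick $C_{n,m}\in\scr W_n\setminus\scr W_m$ (possible as soon as $\scr W_n\neq\scr W_m$; when $\scr W_n=\scr W_m$ one argues instead that the two copies of $f$ already have ranges that can be thinned apart, or one absorbs this into the bookkeeping), use the P-point property of $\scr W_n$ to get $B_n\in\scr W_n$ almost contained in every $C_{n,m}$, set $A'_n$-images $=B_n\cap\bigcap_{m<n}(\omega-B_m)$, pull these back through $f$ to get $A'_n\in\scr V_n$, and check the images are pairwise disjoint. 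Then $X=\bigcup_{n\in N_i}\{n\}\times A'_n\in\scr U\sm_n\scr V_n$ and $f\restr X$ is one-to-one, giving the third alternative.

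The delicate point I would flag is precisely the possibility that $\scr W_n=\scr W_m$ for distinct $m,n$ (equivalently, that $f$ on the two sections is conjugate via a bijection carrying one ultrafilter to the other), since then the clean separation-of-generators device of Lemma~\ref{pi2} stalls. One clean fix is to not separate the ultrafilters but the ranges: work with the sets $R_n=f[\{n\}\times A_n]\subseteq\omega$ directly, recursively thinning so that $f[\{n\}\times A'_n]\cap f[\{m\}\times A'_m]=\emptyset$ for $m<n$; to do this at stage $n$ one needs a $\scr V_n$-large $A'_n\subseteq A_n$ whose $f$-image misses the finitely many already-fixed finite-or-cofinite obstructions, which is automatic if those obstructions are not in $\scr W_n$, and if some $f[\{m\}\times A'_m]$ \emph{is} in $\scr W_n$ one uses that $\scr W_n$ is nonprincipal to split it and push the conflict to a later stage — but since there are infinitely many stages and at each stage only finitely many predecessors, a bookkeeping/priority argument handles it, exactly as in the standard proof that a countable family of pairwise nonisomorphic selective ultrafilters (here after possibly merging equal ones, using that $\scr U$ is nonprincipal to thin $N_i$) admits pairwise disjoint members. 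I would present the $N_i$ case in this form, citing Lemma~\ref{pi2} as the template and the remark about replacing the $\scr V_n$ on a non-$\scr U$ set to absorb the merging.
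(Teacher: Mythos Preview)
Your overall architecture matches the paper's proof exactly: split into sections, use selectivity of each $\scr V_n$ to get constant-or-injective on $A_n$, use $\scr U$ to pick one behavior, and in the constant case factor through $\pi_1$ and apply selectivity of $\scr U$. That part is fine.

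The gap is in the injective case. You write that ``the $\scr W_n$ need not be pairwise non-isomorphic in general,'' and then launch into an ad hoc separation scheme. But under the section's standing hypotheses they \emph{are} pairwise non-isomorphic: since $f_n$ is one-to-one on $A_n$, the pushforward $\scr W_n=f_n(\scr V_n)$ is isomorphic to $\scr V_n$, and the $\scr V_n$ were assumed pairwise non-isomorphic. Hence the $\scr W_n$ are pairwise non-isomorphic (in particular distinct), and the three-step argument of Lemma~\ref{pi2} applies verbatim to produce pairwise disjoint $Z_n\in\scr W_n$. This is exactly what the paper does, in one line.

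Your proposed workaround for the phantom case $\scr W_n=\scr W_m$ is not only unnecessary but would not succeed if that case actually occurred: two members of the same ultrafilter always meet in a member of that ultrafilter, so you could never get $Z_n\cap Z_m=\emptyset$ with both in $\scr W_n=\scr W_m$, and the ``push the conflict to a later stage'' idea has no place to terminate. The priority/bookkeeping sketch does not close this. The clean resolution is the isomorphism observation above; once you make it, the injective case is a direct invocation of Lemma~\ref{pi2}.
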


\begin{pf}
Consider first, for each $n\in\omega$, the $n\th$ section of $f$, that
is, the function $f_n:\omega\to\omega:y\to f(n,y)$.  Since $\scr V_n$
is selective, $f_n$ is constant or one-to-one on some set $A_n\in\scr
V_n$.  Furthermore, as \scr U is an ultrafilter, it contains a set $B$
such that either $f_n$ is constant on $A_n$ for all $n\in B$ or $f_n$
is one-to-one on $A_n$ for all $n\in B$.

We treat first the case where $f_n$ is constant on $A_n$ for all $n\in
B$.  Then, on the set 
\[
Y=\{\sq{n,y}:n\in B\text{ and }y\in A_n\},
\]
which is in $\scr U\sm_n\scr V_n$, our function $f$ factors through
$\pi_1$; $f(x,y)=g(x)=g(\pi_1(x,y))$, where $g(n)$ is defined
as the constant value taken by $f_n$ on $A_n$.  

As \scr U is selective, $g$ is either constant or one-to-one on a set
$C\in\scr U$.  If $g$ is constant on $C$ then $f$ is constant on
$X=Y\cap{\pi_1}^{-1}(C)$, and we have the first alternative in the
proposition.   If, on the other hand, $g$ is one-to-one on $C$, then
the same $X$ gives us the second alternative in the proposition.

It remains to treat the case where $f_n$ is one-to-one on $A_n$ for
all $n\in B$.  In what follows, $n$ is intended to range only over
$B$.  Then $f_n(\scr V_n)$ is an ultrafilter isomorphic to $\scr V_n$,
so, as $n$ varies over $B$, these are pairwise non-isomorphic
selective ultrafilters.  

Arguing as in the proof of Lemma~\ref{pi2}, we find pairwise disjoint
sets $Z_n\in f_n(\scr V_n)$ for all $n\in B$.  We claim that $f$ is
one-to-one on
\[
X=\{\sq{n,y}:n\in B,\ y\in A_n,\ \text{and }f_n(y)\in Z_n\}.
\]
Indeed, if we had $\sq{n,y},\sq{n',y'}\in X$ and $f(n,y)=f(n',y')$,
i.e., $f_n(y)=f_{n'}(y')$, then this would be an element of $Z_n\cap
Z_{n'}$, so disjointness requires $n=n'$.  Furthermore, as $f_n$ is
one-to-one on $A_n$, we would have $y=y'$.  This completes the proof
of the claim that $f$ is one-to-one on $X$.  Also, since \scr U
contains $B$ and since $\scr V_n$ contains both $A_n$ and
${f_n}^{-1}(Z_n)$ for all $n\in B$, we have $X\in\scr U\sm_n\scr V_n$,
and so we have the third alternative in the proposition.
\end{pf}

\begin{cor}     \label{sum-Q}
  $\scr U\sm_n\scr V_n$ is a Q-point.
\end{cor}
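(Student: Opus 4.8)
Recall that a Q-point is an ultrafilter $\scr W$ on a countable set such that for every partition of that set into finite pieces there is a member of $\scr W$ meeting each piece in at most one point. The plan is to reduce this directly to the canonization of functions just proved. Given a partition of $\omega^2$ into finite sets $\langle P_i : i \in \omega\rangle$, define $f:\omega^2\to\omega$ by letting $f(x,y)$ be the index $i$ of the piece containing $\langle x,y\rangle$. A set $X \in \scr U\sm_n\scr V_n$ on which $f$ is one-to-one automatically meets each $P_i$ in at most one point, so it suffices to find such an $X$; by the Proposition, $f$ is one of: constant on some $X\in\scr U\sm_n\scr V_n$; $\pi_1$ followed by a one-to-one function on some such $X$; or one-to-one on some such $X$.

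The third alternative is exactly what we want, so I must rule out the first two, or rather show that they cannot occur for this particular $f$ — in fact it is enough to observe that in those cases $f$ is \emph{finite-to-one} on $\omega^2$ anyway, hence one can shrink further. Indeed, since each $P_i$ is finite, $f$ is finite-to-one on all of $\omega^2$; so in the second alternative, $\pi_1$ restricted to $X$ would be finite-to-one on $X$, contradicting Lemma~\ref{pi1}(4); and in the first alternative $f$ constant on an infinite set $X$ would make some $P_i$ infinite, a contradiction. Therefore only the third alternative is possible, giving $X\in\scr U\sm_n\scr V_n$ on which $f$, and hence the selector into the pieces $P_i$, is one-to-one. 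This $X$ witnesses the Q-point property for the given partition.

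Since the partition was arbitrary, $\scr U\sm_n\scr V_n$ is a Q-point. The only thing to be careful about is the degenerate case where some pieces $P_i$ are empty or where the partition is into finitely many finite pieces (so $\omega^2$ would be finite) — but $\omega^2$ is infinite, so cofinitely many indices $i$ have $P_i\neq\emptyset$ and we may harmlessly reindex; alternatively one notes that $f$ being finite-to-one on the infinite set $\omega^2$ already forces the third case directly. I do not expect any real obstacle here: the Proposition does all the work, and the argument is just the standard observation that a finite-to-one coloring canonized on an ultrafilter that is not a P-point must be canonized to an injection.
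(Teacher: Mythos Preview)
Your proof is correct and takes essentially the same approach as the paper: apply the Proposition to the finite-to-one index function and use Lemma~\ref{pi1}(4) to eliminate the first two alternatives. The paper's version is terser, starting directly from an arbitrary finite-to-one $f$ rather than from a partition into finite pieces, but the content is identical.
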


\begin{pf}
  If $f:\omega^2\to\omega$ is finite-to-one, then the first two
  alternatives in the proposition are impossible (in view of part~(4)
  of Lemma~\ref{pi1}), and the only remaining alternative is that $f$
  is one-to-one on a set in $\scr U\sm_n\scr V_n$.
\end{pf}

\begin{cor}     \label{sum-RK}
  The only non-principal ultrafilters strictly below $\scr U\sm_n\scr
  V_n$ in the Rudin-Keisler ordering are the isomorphic copies of \scr
  U.
\end{cor}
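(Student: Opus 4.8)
The plan is to read the corollary off the preceding Proposition, which classifies every $f:\omega^2\to\omega$ modulo $\scr U\sm_n\scr V_n$ into three types. Recall that a non-principal ultrafilter $\scr W$ satisfies $\scr W\leq_{RK}\scr U\sm_n\scr V_n$ precisely when $\scr W=f(\scr U\sm_n\scr V_n)=\{a\subseteq\omega:f^{-1}(a)\in\scr U\sm_n\scr V_n\}$ for some $f:\omega^2\to\omega$. So I would fix such an $f$, invoke the Proposition to obtain $X\in\scr U\sm_n\scr V_n$ on which $f\restr X$ is constant, or $\pi_1$ followed by a one-to-one function, or one-to-one, and then handle the three cases.

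The constant case is vacuous: if $f$ takes value $c$ on $X$, then $\{c\}\in\scr W$, contradicting non-principality. In the one-to-one case, $f$ is injective on a member of $\scr U\sm_n\scr V_n$, so $\scr W\equiv_{RK}\scr U\sm_n\scr V_n$; hence $\scr W$ is \emph{not} strictly below, and this case is excluded for the ultrafilters we wish to characterize. In the remaining case $f\restr X=g\circ(\pi_1\restr X)$ for some one-to-one $g$. Here I would first note that $\pi_1(X)\in\scr U$: the set $\{n:X(n)\in\scr V_n\}$ belongs to $\scr U$, and for each such $n$ the section $X(n)$ is nonempty since $\scr V_n$ is non-principal, so $\pi_1(X)\supseteq\{n:X(n)\in\scr V_n\}\in\scr U$; moreover $g$ is one-to-one on $\pi_1(X)$. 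Using Lemma~\ref{pi1}(3), $\scr W=f(\scr U\sm_n\scr V_n)=g(\pi_1(\scr U\sm_n\scr V_n))=g(\scr U)$, and since $g$ is injective on a set in $\scr U$ this gives $\scr W\equiv_{RK}\scr U$.

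Putting the cases together, every non-principal $\scr W$ with $\scr W\leq_{RK}\scr U\sm_n\scr V_n$ is RK-equivalent either to $\scr U\sm_n\scr V_n$ itself or to $\scr U$, so the only ones strictly below are the isomorphic copies of $\scr U$. To finish, I would record that $\scr U$ genuinely lies strictly below: $\pi_1$ witnesses $\scr U\leq_{RK}\scr U\sm_n\scr V_n$ by Lemma~\ref{pi1}(3), and the two cannot be RK-equivalent, because $\scr U$ is selective hence a P-point while $\scr U\sm_n\scr V_n$ is not (Lemma~\ref{pi1}(4)), and being a P-point is an RK invariant. No step here is an obstacle; the only points that call for a line of care are the observation that $\pi_1$ sends $X$ onto a set in $\scr U$ and the bookkeeping that ``strictly below'' removes exactly the one-to-one alternative.
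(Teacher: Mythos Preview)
Your proof is correct and is exactly the argument the paper has in mind: the corollary is stated without proof in the paper, as an immediate consequence of the preceding three-way classification Proposition, and you have unpacked precisely that deduction. Your added verification that $\scr U$ is genuinely strictly below (using Lemma~\ref{pi1}(3),(4) and the RK-invariance of being a P-point) is a welcome bit of completeness that the paper leaves implicit.
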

\subsection{Weak Partition Properties and
  P-points}     \label{sq-bracket-P} 

The goal of this subsection and the next is to show that $\scr
U\sm_n\scr V_n$ satisfies the strongest ``square-bracket'' partition
properties that are possible for a non-P-point.  In the present
subsection, we explain square-bracket partition relations for
ultrafilters, and we show that some of these relations require the
ultrafilter to be a P-point.  In the next subsection, we shall show
that the strongest relations not covered by this result are satisfied
by $\scr U\sm_n\scr V_n$.

\begin{df}
  Let $n$ and $h$ be natural numbers, let $S$ be a countable set (for
  our purposes $S$ is usually $\omega$ or $\omega^2$), and let \scr W
  be an ultrafilter on $S$.  Then \scr W is $(n,h)$-\emph{weakly
    Ramsey} if, for every partition of $[S]^n$ into finitely many
  pieces, there is a set $W\in\scr W$ such that $[W]^n$ meets at
  most $h$ of the pieces.
\end{df}

\begin{rmk}
It would make no difference in this definition if, instead of
partitions into an arbitrary finite number of pieces, we referred only
to partitions into $h+1$ pieces, the first non-trivial case.  The
remaining cases would then follow by a routine induction on the number
of pieces.

What we have defined as $(n,h)$-weakly Ramsey is often expressed in
the partition calculus notation as
\[
S\to[\scr W]^n_{h+1},
\]
meaning that, if $[S]^n$ is partitioned into $h+1$ pieces then \scr W
contains a set $W$ such that $[W]^n$ misses a piece.  The square
brackets around \scr W in this notation are used to indicate that we
have only weak homogeneity, missing one piece.  Round brackets
conventionally denote full homogeneity, namely meeting only one
piece.  
\end{rmk}

When considering $(n,h)$-weak Ramseyness, we always
assume $n\geq2$ and $h\geq1$ to avoid trivialities.

Clearly, if we increase $h$ while keeping $n$ fixed, the property of
$(n,h)$-weak Ramseyness becomes weaker.

As mentioned in Section~\ref{intro}, for each $n\geq2$, the property
of being $(n,1)$-weakly Ramsey is equivalent to selectivity.  We
intend to show next that, as we increase $h$ to values greater than 2,
$(n,h)$-weak Ramseyness continues to imply that \scr W is a P-point,
until $h$ reaches a certain critical value $T(n)$, which we shall
compute.

To show that, for certain $n$ and $h$, all $(n,h)$-weakly Ramsey
ultrafilters are P-points, we shall consider ultrafilters \scr W that
are not P-points, and we shall describe $h+1$ different types of
$n$-element sets that must occur inside each $W\in\scr W$.  It will be
convenient to describe these types first in an abstract, formal way,
and only afterward to connect them with $n$-element sets.  

The following definition describes the abstract types.  

\begin{df}
  An $n$-type is a list of $2n$ variables, namely $x_i$ and $y_i$ for
  $1\leq i\leq n$, with either $<$ or $=$ between each consecutive
  pair in the list, such that 
  \begin{ls}
    \item the $y_i$'s occur in the list in increasing order of their
      subscripts,
\item equality signs can occur only between two $x_i$'s, and 
\item each $x_i$ precedes the $y_i$ that has the same subscript.
  \end{ls}
Two such lists are considered the same if they differ only by 
permuting $x_i$'s that are connected by equality signs.
\end{df}

\begin{ex}
  A fairly typical 7-type is 
\[
x_3=x_1<x_5<y_1<x_2=x_7<y_2<y_3<x_4<y_4<x_6<y_5<y_6<y_7.
\]
Interchanging $x_3$ with $x_1$ or interchanging $x_2$ with $x_7$ or
both would result in a new representation of the same 7-type.  Any
other rearrangements would result in a different 7-type (or in a list
that fails to be a 7-type).
\end{ex}

\begin{rmk}     \label{perorder}
  An equivalent definition of $n$-type is as a linear pre-ordering of
  the set of $2n$ $x_i$'s and $y_i$'s subject to the requirements that
  its restriction to the $y_i$'s is the strict ordering according to
  subscripts, that any equivalence class with more than one element
  must consist entirely of $x_i$'s, and that each $x_i$ strictly
  precedes the corresponding $y_i$.  
\end{rmk}

The number of $n$-types is the critical number $T(n)$ mentioned above,
the border between those $(n,h)$-weak Ramsey properties that imply
P-point and those that do not.  We shall later describe a more
effective means to compute $T(n)$, but first we develop the connection
between types and Ramsey properties of ultrafilters.

\begin{df}
  Let $n\geq2$, let $\tau$ be an $n$-type, let $f:\omega\to\omega$,
  and let $\bld a=\{a_1<a_2<\dots<a_n\}$ be an $n$-element subset of
  $\omega$ with its elements listed in increasing order.  We say that
  \bld a \emph{realizes} $\tau$ with respect to $f$ and that $\tau$ is
  the $f$-\emph{type of} \bld a if the equations and inequalities in
  $\tau$ become true when every $y_i$ is interpreted as $a_i$ and
  every $x_i$ is interpreted as $f(a_i)$.
\end{df}

\begin{rmk}     \label{preorder}
  Because of the restrictions in the definition of $n$-types, there
  can be $n$-element sets \bld a that have no $f$-type.  This could
  happen if $a_i=f(a_j)$ for some $i$ and $j$, because $n$-types
  cannot say that $y_i=x_j$.  It could also happen if $f(a_i)\geq a_i$
  for some $i$, because $x_i$ must strictly precede $y_i$ in any
  $n$-type.  
\end{rmk}

The following proposition is, as far as we know, due to Galvin but
never published.  A brief indication of the proof is given, along with
an explicit statement of the result for $n=24$ attributed to Galvin,
on page~85 of \cite{mod-view}.  Since that brief indication used
ultrapowers, we take this opportunity to give a complete and purely
combinatorial proof.

\begin{prop}    \label{galvin}
  Let \scr W be a non-principal ultrafilter on $\omega$ that is not a
  P-point, and let $f:\omega\to\omega$ be a function that is neither
  finite-to-one nor constant on any set in \scr W.  For any natural
  number $n$, every set $W\in\scr W$ includes $n$-element subsets
  realizing with respect to $f$ all $n$-types.
\end{prop}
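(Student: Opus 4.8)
The plan is to argue by induction on $n$, realizing more and more of the $y_i$'s and $x_i$'s one step at a time while staying inside a fixed $W \in \scr W$. The key structural fact we exploit is that, since $f$ is not finite-to-one on any set in $\scr W$, for every $W' \in \scr W$ and every $k \in \omega$ there is a value $v$ such that $f^{-1}(v) \cap W'$ has at least $k$ elements (indeed infinitely many, on a set still in $\scr W$ if we wish), and since $f$ is not constant on any set in $\scr W$, no single fiber is large in $\scr W$. These two features are exactly what lets us arrange, in a typical $n$-type, both blocks of equal $x_i$'s (we find many $a_j$'s in a common fiber of $f$) and the strict inequalities among $x_i$'s and among $y_i$'s.

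Fix $W \in \scr W$ and an $n$-type $\tau$; I want to produce $a_1 < \dots < a_n$ in $W$ realizing $\tau$ with respect to $f$. Read $\tau$ as a linear pre-ordering of the $2n$ symbols (Remark~\ref{perorder}), and process the symbols in this pre-order from smallest to largest, maintaining a finite partial assignment of actual natural numbers to the symbols seen so far, all lying in $W$, respecting the order and equalities of $\tau$, and — crucially — with the property that the ``current tail'' of $W$ above everything assigned so far is still in $\scr W$. When the next symbol to assign is some $y_i$, I simply need a point of $W$ larger than everything assigned so far; since the relevant tail of $W$ is in $\scr W$, it is infinite, so such a point exists, and I pick it large enough that it also exceeds the value $f$ would need at any not-yet-assigned $x_j$ (I do not yet know those values, but since each such $x_j$ precedes its $y_j$ which comes strictly later, I only need to leave room). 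When the next symbol is an $x_i$ (or a whole equality-block of $x_i$'s), I need to choose $a_i$ — an element of $W$, larger than all previously chosen $a$'s — whose $f$-value equals the already-assigned value of this block if the block was started earlier, or else is a fresh value strictly between the appropriate previously-placed symbols. Here is where non-finite-to-one is used: among the (infinitely many, in $\scr W$) candidates for $a_i$ in the current tail of $W$, the function $f$ takes some value on arbitrarily large finite fibers, and by non-constancy we can steer $f(a_i)$ into whatever open interval $\tau$ demands and, when we return to the same $x$-block later, find another $a_j$ in the same fiber.

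The main obstacle — and the place requiring the most care — is the bookkeeping that keeps the induction hypothesis ``the tail of $W$ above the current assignment is still in $\scr W$'' intact while simultaneously forcing $f$-values into prescribed intervals: when I want $f(a_i)$ to land strictly between two values $u < u'$ already assigned to earlier $x$-symbols, I must know that $\{a \in W : u < f(a) < u'\}$ is nonempty (in fact in $\scr W^+$), and this can fail for a bad choice of $u, u'$. The fix is to do the assignment of $x$-values greedily and globally rather than locally: before placing any $y$'s, first use non-finite-to-one repeatedly to pick, inside $W$, a finite set of elements realizing the $x$-part of $\tau$ — i.e.\ with the right pattern of equalities and strict inequalities among their $f$-values — which is possible because at each stage the set of ``new'' $f$-values available on $W$ is infinite (non-constancy) while each fiber meeting $W$ can be taken large (non-finite-to-one), so equalities and fresh strict values are both achievable; then, treating those chosen elements' indices as fixed, interleave the remaining elements (those $a_i$ with $x_i$ appearing, together with all the $y$-constraints) by the infinite-tail argument above, shrinking $W$ only finitely and hence never leaving $\scr W$. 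Organizing this two-phase selection — $x$-values first, then a compatible increasing enumeration realizing the full pre-order — is the crux; once it is set up, verifying that the resulting $\{a_1 < \dots < a_n\}$ satisfies every equation and inequality of $\tau$ under the interpretation $y_i \mapsto a_i$, $x_i \mapsto f(a_i)$ is a direct check against the definition of $n$-type.
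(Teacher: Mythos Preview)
Your left-to-right processing of the symbols of $\tau$ is the same skeleton the paper uses, but the argument has a real gap at the crucial step, and the ``obstacle'' you identify and then try to repair is not the actual difficulty.

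The issue is your treatment of the $y_i$ step. You write that for $y_i$ you ``simply need a point of $W$ larger than everything assigned so far.'' That is not enough: the value $a_i$ you assign to $y_i$ must also satisfy $f(a_i)=v$, where $v$ is the value already assigned to $x_i$. So at the $y_i$ step you need an element of $W\cap f^{-1}(\{v\})$ larger than all previously chosen values, which forces $W\cap f^{-1}(\{v\})$ to be infinite. Consequently, at the $x_i$ step you cannot choose just any fresh large value; you must choose it from
\[
B=\{v\in\omega: W\cap f^{-1}(\{v\})\text{ is infinite}\}.
\]
The key lemma you are missing is that $B$ is infinite: $f^{-1}(B)\in\scr W$ because $f$ is finite-to-one on $W\setminus f^{-1}(B)$, and then $B$ cannot be finite because $f$ would be constant on a set in $\scr W$. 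Once you have this, the left-to-right assignment is straightforward: at an $x$-block pick any element of $B$ larger than all previous values; at $y_i$ pick any element of $W\cap f^{-1}(\{v\})$ larger than all previous values. No ultrafilter invariant (``tail of $W$ still in $\scr W$'') is needed beyond establishing that $B$ is infinite.

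Your ``between'' obstacle never arises: processing left to right, every newly assigned value need only be larger than all previous ones, never strictly between two of them. And your two-phase fix is circular: once you have chosen the $a_i$'s to realize the $x$-pattern, the $y_i$-values are $a_i$ themselves, so there is nothing left to ``interleave''; you would have to have chosen the $a_i$'s already satisfying the full pre-order, which is the original problem.
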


\begin{pf}
Fix \scr W, $W$, $f$, and $n$ as in the proposition, and fix an $n$-type
$\tau$.  Let 
\[
B=\{v\in\omega: W\cap f^{-1}(\{v\}) \text{ is infinite}\}
\]
and note that $f^{-1}(B)\in\scr W$; indeed, $f$ is finite-to-one on
the intersection of $W$ with the complement of $f^{-1}(B)$, so this
complement cannot be in \scr W.  Note also that $B$ is infinite, for
otherwise $f$ would take only finitely many values on $W\cap
f^{-1}(B)$ and would therefore be constant on a set in \scr W.

Now we shall produce a set $\bld a\subseteq W$ realizing the given
$n$-type $\tau$.  We go through the list $\tau$ of variables $x_i$ and
$y_i$ in the order given by $\tau$, assigning a value to each variable
in turn.  At each step, we proceed as follows, assuming we have just
reached a certain variable and have assigned values to the earlier
variables in $\tau$.
\begin{ls}
\item If we have reached $x_i$ and there is an equality symbol immediately
  before it in $\tau$, say $x_j=x_i$,  then we assign to $x_i$ the
  same value that was already assigned to $x_j$.
\item If we have reached $x_i$ and there is no equality symbol
  immediately before it (either because there is $<$ there or because
  $x_i$ is first in the list $\tau$), then we assign to $x_i$ a value
  in $B$ that is larger than all of the (finitely many) values already
  assigned to other variables.
\item If we have reached $y_i$, then $x_i$ has already been assigned a
  value $v$; we assign to $y_i$ a value $a_i$ in $W\cap f^{-1}(\{v\})$ that is
  larger than all of the values already assigned to other variables.
\end{ls}
In the second and third cases, where a new, large value must be chosen, we
use the fact that $B$ and $W\cap f^{-1}(\{v\})$ (for $v\in B$) are
infinite, so sufficiently large values are available.  The values
$a_i$ assigned to the $y_i$'s are all in $W$, and they are in
increasing order.  They were chosen so that their images $f(a_i)$ are
the values assigned to the corresponding $x_i$'s.  Using these
observations and the fact that, whenever a new value was chosen, it
was larger than all previously chosen values, it is easy to see that
$\bld a=\{a_1<a_2<\dots<a_n\}$ realizes $\tau$, as required.
\end{pf}

\begin{df}
Let $T(n)$ denote the number of $n$-types.
\end{df}

\begin{cor}     \label{weak-to-P}
  Every $(n,T(n)-1)$-Ramsey ultrafilter is a P-point.
\end{cor}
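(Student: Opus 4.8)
The plan is to prove the contrapositive: if $\scr W$ is an ultrafilter on $\omega$ that is not a P-point, then $\scr W$ fails to be $(n,T(n)-1)$-weakly Ramsey. The whole argument is an application of Proposition~\ref{galvin}, together with a bookkeeping point about how many pieces the relevant partition has.

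First I would observe that a non-P-point ultrafilter is automatically non-principal: a principal ultrafilter $\{A:x_0\in A\}$ has every $f$ constant on the set $\{x_0\}$, hence is trivially a P-point. So Proposition~\ref{galvin} applies to $\scr W$. By the definition of P-point, since $\scr W$ is not a P-point there is a function $f:\omega\to\omega$ that is neither finite-to-one nor constant on any set in $\scr W$; fix one such $f$.

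Next I would use $f$ to partition $[\omega]^n$. To each $\bld a\in[\omega]^n$ that has an $f$-type, assign the color equal to that $f$-type; this uses $T(n)$ colors, one per $n$-type. As noted in Remark~\ref{preorder}, there can be $n$-element sets with no $f$-type (those with $a_i=f(a_j)$ for some $i,j$, or with $f(a_i)\ge a_i$ for some $i$), so I collect all of those into one extra color class. This yields a partition of $[\omega]^n$ into exactly $T(n)+1$ pieces. Now apply Proposition~\ref{galvin}: for every $W\in\scr W$, the set $[W]^n$ contains, for \emph{every} $n$-type $\tau$, an $n$-element subset realizing $\tau$ with respect to $f$; hence $[W]^n$ meets all $T(n)$ of the ``type'' pieces, and therefore meets at least $T(n)$ pieces of the partition. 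Thus no $W\in\scr W$ has $[W]^n$ meeting at most $T(n)-1$ pieces, so $\scr W$ is not $(n,T(n)-1)$-weakly Ramsey, completing the contrapositive.

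There is no real obstacle in this argument once Proposition~\ref{galvin} is in hand; the only point that needs care is including the extra color class for $n$-element sets that have no $f$-type, so that the partition has $T(n)+1$ pieces rather than $T(n)$. Without that extra piece the partition would have only $T(n)$ pieces, and the conclusion ``$[W]^n$ meets all $T(n)$ pieces'' would be vacuous.
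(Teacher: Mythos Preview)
Your proof is correct and follows essentially the same route as the paper: take $f$ witnessing non-P-pointness, color $n$-sets by their $f$-type, and invoke Proposition~\ref{galvin} to see that every $W\in\scr W$ hits all $T(n)$ type-colors. The only cosmetic difference is that the paper throws the untyped $n$-sets into one of the existing $T(n)$ pieces rather than creating a $(T(n)+1)$st piece.

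One small correction to your closing remark: your claim that without the extra piece ``the conclusion `$[W]^n$ meets all $T(n)$ pieces' would be vacuous'' is not right. A partition into exactly $T(n)$ pieces (with untyped sets absorbed into one of them) works perfectly well---showing that every $W$ meets all $T(n)$ pieces is precisely the contradiction to $(n,T(n)-1)$-weak Ramseyness, and is not vacuous. The only genuine issue with omitting the extra piece is that the untyped sets must go \emph{somewhere} for the coloring to be a partition of all of $[\omega]^n$; once you put them anywhere, the argument goes through with $T(n)$ pieces.
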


\begin{pf}
  Suppose \scr W were a counterexample.  Let $f:\omega\to\omega$
  witness that \scr W is not a P-point.  Partition the $n$-element
  subsets of $\omega$ into $T(n)$ pieces $C_\tau$, one for each
  $n$-type $\tau$, by putting into $C_\tau$ all those $n$-element sets
  that realize $\tau$ with respect to $f$.  Sets that realize no type
  with respect to $f$ can be thrown into any of the pieces.  By the
  proposition, every set in \scr W has $n$-element subsets in all of
  the pieces.  So this partition is a counterexample to
  $(n,T(n)-1)$-weak Ramseyness.
\end{pf}

In the next subsection, we shall show that this corollary is optimal;
a non-P-point can be $(n,T(n))$-weakly Ramsey.  It is therefore of
some interest to understand $T(n)$, and we devote the rest of this
subsection to this finite combinatorial topic.

There is a simple recurrence relation, not for $T(n)$ but for a
closely related and more informative two-variable function $T(n,k)$
defined as the number of $n$-types $\tau$ that have exactly $k$
equivalence classes of $x_i$'s.  Here two $x_i$'s are called
equivalent\footnote{This is the same as the equivalence relation
  arising from the pre-order in Remark~\ref{perorder}.} if, between
them in $\tau$, there are only $=$ signs, not $<$ signs.  (It follows,
by the definition of types, that the only variables occurring between
these two $x$'s are other $x$'s, all in the same equivalence class.)
In general, $k$ can be as small as 1 (in just one $n$-type, namely
$x_1=\dots=x_n<y_1<\dots<y_n$) and as large as $n$ (if there are no
$=$ signs in $\tau$).  Clearly,
\[
T(n)=\sum_{k=1}^nT(n,k).
\]
We have the recursion relation
\[
T(n,k)=kT(n-1,k)+(n+k-1)T(n-1,k-1)
\]
for $n\geq k\geq1$.  To see this, consider an arbitrary $n$-type
$\tau$ with $k$ equivalence classes of $x$'s.  Let $\sigma$ be the
induced $n-1$-type.  That is, obtain $\sigma$ by deleting $x_n$ and
$y_n$ from $\tau$ and combining the $=$ and $<$ signs around $x_n$ in
the obvious way.  (If both are $=$, use $=$, and otherwise use $<$.
Note that no combining is needed around $y_n$ since it is at the end
of the list $\tau$.  Also, $x_n$ could be at the beginning of $\tau$,
in which case it too would need no combining.)  The number of
equivalence classes in $\sigma$ is either $k$ or $k-1$, depending on
whether $x_n$ was, in $\tau$, equivalent to another $x_i$ or not.
Each of the $T(n-1,k)$ possible $\sigma$'s of the first sort arises
from $k$ possible $\tau$'s, because $x_n$ could have been in any of
the $k$ equivalence classes. This accounts for the first term on the
right side of our recurrence.  Each of the $T(n-1,k-1)$ possible
$\sigma$'s of the second sort arises from $n+k-1$ possible $\tau$'s,
because $x_n$ can be put into any of the intervals determined by the
$n-1$ $y$'s and $k-1$ equivalence classes of $x$'s in $\sigma$.  Here
the possible intervals include the degenerate ``intervals'' at the
left and right ends of $\sigma$, so the number of intervals is
$n+k-1$.  This accounts for the second term on the right side and thus
completes the proof of our recurrence relation.

There are other combinatorial interpretations of $T(n)$.  For example,
it is the number of rooted trees with $n+1$ labeled leaves, subject to
the requirement that every non-leaf vertex must have at least two
children, i.e., there is genuine branching at each internal node.  The
number of internal nodes can be as small as 1 (if all the leaves are
children of the root) and as large as $n$ (if all branching is
binary).  $T(n,k)$ counts these trees according to the number $k$ of
internal nodes.

For more information, tables of values, and references, see
\cite{oeis}, where $T(n)$ is (up to a shift of the indexing)
sequence~A000311 and $T(n,k)$ is (rearranged into a single sequence)
A134991. 

\subsection{Weak Partition Properties and Sums of Selective
  Ultrafilters}         

The goal of this subsection is to prove that ultrafilters of the form
$\scr U\sm_n\scr V_n$ (still subject to this section's convention that
\scr U and the $\scr V_n$ are pairwise non-isomorphic selective
ultrafilters) have the strongest weak-Ramsey properties that
Corollary~\ref{weak-to-P} permits for a non-P-point.  

\begin{thm}     \label{T-wk}
  $\scr U\sm_n\scr V_n$ is $(n,T(n))$-weakly Ramsey.
\end{thm}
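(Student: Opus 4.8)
The plan is to prove the theorem by a fusion-style recursion that, given a finite partition of $[\omega^2]^n$, produces a set $X \in \scr U \sm_n \scr V_n$ such that $[X]^n$ meets at most $T(n)$ pieces, where the $T(n)$ surviving pieces correspond exactly to the $n$-types of Definition~(n-type) — read now with respect to the first projection $\pi_1$ rather than an arbitrary function $f$. The point is that, by Lemma~\ref{pi1}(2), every set in $\scr U \sm_n \scr V_n$ contains a set of the above-diagonal form $\{\sq{x,y}: n < x$ and $g(x) < y\}$; on such a set the pair $\sq{x,y}$ behaves, with respect to the linear order of $\omega^2$ that we use to list $n$-element subsets, exactly like a pair $(f(a), a)$ in Proposition~\ref{galvin} with $f = \pi_1$. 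So Proposition~\ref{galvin} (applied to \scr U, which is not a P-point for $\pi_1$ — wait, \scr U is selective; rather the relevant non-P-pointness is that of $\scr U\sm_n\scr V_n$ via Lemma~\ref{pi1}(4)) already forces all $T(n)$ types to appear; the content of the theorem is the matching upper bound, namely that we can shrink to a set on which \emph{only} the type of an $n$-tuple, together with its $\pi_1$-classes' internal behaviour, determines which piece it lies in.

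The key steps, in order, are as follows. First, I would fix a finite colouring $c$ of $[\omega^2]^n$ and a base set $X_0 \in \scr U\sm_n\scr V_n$ of the above-diagonal form, so that $X_0(x) \in \scr V_x$ for \scr U-almost all $x$ and $X_0(x) \subseteq (g(x),\omega)$. Second, I would classify an $n$-element subset $\{\sq{x_1,y_1},\dots,\sq{x_n,y_n}\}$ by the $\pi_1$-pattern: which of the $x_i$ coincide and how the distinct values interleave with the $y_j$'s — this is exactly an $n$-type $\tau$ once we note $x_i < y_i$ automatically on an above-diagonal set. Third — the heart of the argument — for a fixed type $\tau$ I would use selectivity of \scr U and of the $\scr V_n$ repeatedly to homogenize the colour: first run a Galvin–Prikry / selective-ultrafilter diagonalization \emph{on the \scr U side} over the distinct first coordinates, reducing to a set $C \in \scr U$ on which the colour of a $\tau$-tuple depends only on the data of the second coordinates within each fixed block of equal first coordinates; then for each such block run the corresponding homogenization \emph{on the $\scr V_n$ side} (here the $\scr V_n$ are selective, and when several $y_i$ sit over the same $x=n$, we are colouring tuples from $[\omega]^{k}$ for various $k$ and can apply the ordinary Ramsey property of the selective ultrafilter $\scr V_n$). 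Iterating over the finitely many types and taking a diagonal intersection (legitimate since \scr U is a P-point, and using the sum structure to assemble the $\scr V_n$-pieces) yields $X \subseteq X_0$ in $\scr U\sm_n\scr V_n$ on which $c$ restricted to $[X]^n$ factors through the type map, hence takes at most $T(n)$ values.

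I expect the main obstacle to be bookkeeping the interaction between the two "levels" of selectivity when an $n$-type puts several $y_i$'s over a single equivalence class of $x_i$'s that is itself pinned to one value $n$: there we must Ramsey-homogenize inside a single $\scr V_n$ for subsets of $[\omega]^k$ with $k$ up to $n$, \emph{simultaneously} for \scr U-almost all $n$, and then fuse these choices into one set of $\scr U\sm_n\scr V_n$ while also respecting the earlier homogenization done on the \scr U side for the distinct-first-coordinate part of $\tau$. Making the recursion converge — i.e. performing countably many shrinkings of each $\scr V_n$ and one countable shrinking of \scr U without losing membership in the sum — is where the P-point/selectivity hypotheses are used in an essential way, and it is the step whose details I would be most careful to get right; everything else is the translation of Proposition~\ref{galvin}'s type analysis into the two-coordinate setting.
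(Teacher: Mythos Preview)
Your overall plan matches the paper's: show separately that (a) some set in $\scr U\sm_n\scr V_n$ has every $n$-element subset realizing a type, and (b) for each fixed type $\tau$ there is a set $Y_\tau\in\scr U\sm_n\scr V_n$ on which all $\tau$-tuples receive the same colour; then intersect the finitely many $Y_\tau$'s with the set from (a). The type classification via $\pi_1$ is also exactly what the paper does.

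The gap is in your step (b). A clean decomposition into ``first homogenize on the $\scr U$ side over the distinct first coordinates, then on each $\scr V_n$ side over the second coordinates'' does not handle types in which $x$-variables and $y$-variables interleave. Already for $n=2$ and the type $\tau:\ x_1<y_1<x_2<y_2$, a $\tau$-pair is $\{(p_1,q_1),(p_2,q_2)\}$ with $p_1<q_1<p_2<q_2$: the first coordinate $p_2$ is constrained to lie \emph{above} the second coordinate $q_1$, so one cannot first freeze the $\scr U$-data $(p_1,p_2)$ and then independently homogenize over $q_1\in\scr V_{p_1}$ --- the admissible $q_1$'s form only a finite set once $p_2$ is fixed, and conversely the relevant $\scr V_{p_2}$ changes with each choice of $p_2>q_1$. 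Your ``main obstacle'' paragraph focuses on several $y_i$'s sitting over a common $x$-class (e.g.\ $x_1=x_2<y_1<y_2$), but that case is comparatively easy: it is ordinary Ramsey inside a single selective $\scr V_n$. The genuine difficulty is the interleaving across distinct $x$-classes, which your two-pass scheme does not address.

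The paper does not attempt a direct two-level recursion. Instead it invokes a black-box theorem (Proposition~\ref{analytic}, which is Theorem~7 of \cite{sel-hom}): given selective ultrafilters $\scr W_s$ indexed by finite sets $s\subseteq\omega$, pairwise equal or non-isomorphic, and an analytic $\scr X\subseteq[\omega]^\omega$, there exist sets $Z(\scr W)\in\scr W$ such that $\scr X$ contains all or none of the sequences $z_0<z_1<\dots$ with $z_\alpha\in Z(\scr W_{\{z_0,\dots,z_{\alpha-1}\}})$. For a fixed type $\tau$ with $k$ $x$-classes, the paper encodes each $\tau$-tuple as an increasing sequence of length $n+k$ listing the values at the positions of $\tau$ in order, and sets $\scr W_s=\scr U$ when position $|s|$ in $\tau$ is an $x$-class and $\scr W_s=\scr V_v$ when it is $y_i$ and $v$ is the value already assigned to $x_i$. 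The homogeneity conclusion then unwinds directly to the desired $Y_\tau$. The essential point is that the homogenization proceeds along the \emph{linear order of $\tau$}, alternating between $\scr U$ and the various $\scr V_v$ as the type dictates, rather than in two separated passes. If you want to avoid citing \cite{sel-hom}, you would have to reproduce that position-by-position argument; it is substantially more intricate than what you outline.
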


\begin{pf}
  We shall need to refer to the types of $n$-element subsets of the
  set $\omega^2$ underlying our ultrafilter $\scr U\sm_n\scr V_n$.
  Types were defined above for $n$-element subsets of $\omega$, not
  $\omega^2$.  In principle, this is no problem, because we know, from
  Lemma~\ref{pi2}, that $\pi_2$ is an isomorphism from $\scr
  U\sm_n\scr V_n$ to an ultrafilter \scr W on $\omega$, so we can just
  transfer, via this isomorphism, any concepts and facts that we need.
  In practice, though, the need to repeatedly apply this isomorphism
  makes statements unpleasantly complicated, so we begin by
  reformulating the relevant facts about types in a way that lets us
  use $\scr U\sm_n\scr V_n$ directly.

  Our discussion of types above was relative to a fixed function $f$
  witnessing that the ultrafilter is not a P-point.  We know, from
  Lemma~\ref{pi1}, that $\pi_1$ is such a function for $\scr
  U\sm_n\scr V_n$; therefore $f=\pi_1\circ{\pi_2}^{-1}$ is such a
  function for $\scr W=\pi_2(\scr U\sm_n\scr V_n)$.  Notice, in this
  connection, that although $\pi_2$ is not globally one-to-one, it has
  a one-to-one restriction to a suitable set in $\scr U\sm_n\scr V_n$
  (Lemma~\ref{pi2}); enlarging this set slightly if necessary, we get
  a set $G\in\scr U\sm_n\scr V_n$ on which $\pi_2$ is bijective.
  By ${\pi_2}^{-1}:\omega\to G$ we mean the inverse of this bijection.

Now any $n$-element subset \bld a of $G$ corresponds to an $n$-element
subset $\bld b=\pi_2(\bld a)$ of $\omega$.  With
$f=\pi_1\circ{\pi_2}^{-1}$ as above, the $f$-type of \bld b will be
called simply the \emph{type} of \bld a.  Unraveling the definitions,
we find that this type can be described as follows, directly in terms
of \bld a.  Enumerate the elements of \bld a as
\[
\sq{p_1,q_1},\sq{p_2,q_2},\dots,\sq{p_n,q_n}
\]
in order of increasing second components, so $q_1<q_2<\dots<q_n$.
Then \bld a realizes the $n$-type $\tau$ if and only if all the
equations and inequalities in $\tau$ become true when the variables
$x_i$ are interpreted as $p_i$ and the $y_i$ are interpreted as $q_i$.

Let us use, for points in $\omega^2$, the familiar terminology
``$x$-coordinate'' and ``$y$-coordinate'' for the first and second
components.  Then, the $x$ and $y$ variables in a type represent the
$x$ and $y$ coordinates of the points of a set realizing the type.

Transferring Proposition~\ref{galvin} and its corollaries via the
isomorphism $\pi_2$, we learn that every set in $\scr U\sm_n\scr V_n$
contains $n$-element subsets realizing all $T(n)$ of the $n$-types.
The proof of the theorem will consist of showing that this is all one
can say in the direction of existence of many different kinds of
$n$-element subsets in all sets from $\scr U\sm_n\scr V_n$.
 
Consider an arbitrary partition $\Pi$ of the set $[\omega^2]^n$ of 
$n$-element subsets of $\omega^2$ into a finite number $z$ of pieces.
We shall show that there is a set $W\in\scr U\sm_n\scr V_n$ such that
\begin{ls}
  \item every $n$-element subset of $W$ realizes an $n$-type, and
\item any two subsets realizing the same $n$-type are in the same
  piece of the partition $\Pi$.
\end{ls}
This will clearly suffice to prove the theorem.  

Furthermore, we can treat the various types separately.  That is, it
suffices to find 
\begin{ls}
  \item a set $X\in\scr U\sm_n\scr V_n$ such that all $n$-element
    subsets of $X$ realize $n$-types and
\item for each $n$-type $\tau$, a set $Y_\tau\in\scr U\sm_n\scr V_n$
  such that all $n$-element subsets of $Y_\tau$ that realize $\tau$
  lie in the same piece of the partition $\Pi$.
\end{ls}
Indeed, the intersection of $X$ and all $T(n)$ of the sets $Y_\tau$'s
will then be a set $W$ as required above.

Let us first produce the required $X$ all of whose $n$-element subsets
realize types.  Inspecting the descriptions of types and realization,
we find that $X$ needs to have the following properties, in addition
to being in $\scr U\sm_n\scr V_n$.
\begin{lsnum}
  \item No two distinct elements of $X$ have the same $y$-coordinate
    (so that the $y_i$ can all be properly ordered in the type).
\item Each element of $X$ has its $x$-coordinate smaller than its
  $y$-coordinate (so that each $x_i$ can precede $y_i$ in the type).
\item No $x$-coordinate of a point in $X$ equals the $y$-coordinate of
  another point in $X$ (so that the type has no equalities between any
  $x_i$ and $y_j$).  
\end{lsnum}
Furthermore, these three requirements can be treated independently; if
we find a different $X\in\scr U\sm_n\scr V_n$ for each one, we can
just intersect those three $X$'s to complete the job.  For the first
two requirements, we already have the necessary $X$'s.  The first says
that $\pi_2$ is one-to-one on $X$, so Lemma~\ref{pi2} gives what we
need.  The second is satisfied by $\{\sq{x,y}\in\omega^2:x<y\}$ which
we noted, right after Lemma~\ref{pi1}, is in $\scr U\sm_n\scr V_n$.
We therefore concentrate on the third requirement.

For this requirement, it suffices to find a set $B\in\scr U$ that is
in none of the $\scr V_n$, for then $B\times(\omega-B)$ serves as the
required $X$.  Since \scr U is distinct from all the $\scr V_n$, we
can find, for each $n$, a set $B_n\in\scr U-\scr V_n$.  Then, since
\scr U is a P-point, we can find a single $B\in\scr U$ almost included
in each $B_n$.  Since each $\scr V_n$ is non-principal and doesn't
contain $B_n$, it cannot contain $B$ either.

This completes the proof that $\scr U\sm_n\scr V_n$ contains a set $X$
all of whose $n$-element subsets represent types.  It remains to
consider an arbitrary type $\tau$ and find a set $Y_\tau\in\scr
U\sm_n\scr V_n$ all of whose $n$-element subsets of type $\tau$ are in
the same piece of our partition $\Pi$.

Fix, therefore, a particular $\tau$ for the remainder of the proof.
It suffices to find an appropriate set $Y_\tau$ in the case that $\Pi$
is a partition into only two pieces.  The general case where $\Pi$ has
any finite number $z$ of pieces then follows.  Just consider all the
2-piece partitions coarser than $\Pi$, find an appropriate $Y$ for
each of these, and intersect these finitely many $Y$'s.  So from now
on, in addition to working with  a fixed $\tau$, we work with a fixed
partition $\Pi=\{R,[\omega^2]^n-R\}$ of $[\omega^2]^n$ into two pieces.  Our
goal is to find a set $Y\in\scr U\sm_n\scr V_n$ such that either all
$n$-element subsets of $Y$ realizing $\tau$ are in $R$ or none of them
are.  

Fortunately, a stronger result than this was already proved as
Theorem~7 in \cite[page~236]{sel-hom}.  We shall quote that result and
then indicate how it implies what we need here.

\begin{prop}[\cite{sel-hom}, Theorem~7] \label{analytic}
  Let there be given selective ultrafilters $\scr W_s$ on $\omega$ for
  all finite subsets $s$ of $\omega$.  Assume that, for all
  $s,t\in[\omega]^{<\omega}$, the ultrafilters $\scr W_s$ and $\scr
  W_t$ are either equal or not isomorphic.  Let there also be given an
  analytic subset \scr X of the set $[\omega]^\omega$ of infinite
  subsets of $\omega$.  Then there is a function $Z$ assigning, to
  each ultrafilter \scr W that occurs among the $\scr W_s$'s, some
  element $Z(\scr W)\in\scr W$ such that \scr X contains all or none
  of the infinite subsets $\{z_0<z_1<z_2<\dots\}$ that satisfy $z_n\in
  Z(\scr W_{\{z_0,\dots,z_{n-1}\}})$ for all $n\in\omega$.
\end{prop}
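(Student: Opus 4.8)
The statement is a tree-indexed version of Mathias's theorem that analytic subsets of $[\omega]^\omega$ are Ramsey with respect to a selective ultrafilter, and the plan is to prove it by the Galvin--Prikry method adapted to a branching family of selective ultrafilters. Since $[\omega]^{<\omega}$ is countable, the collection $\scr D$ of ultrafilters that actually occur among the $\scr W_s$ is countable. I would work with \emph{conditions} $(s,Z)$, where $s\in[\omega]^{<\omega}$ is a stem and $Z$ assigns to each $\scr W\in\scr D$ a set $Z(\scr W)\in\scr W$ with $\max s<\min Z(\scr W_s)$, ordered by end-extension of the stem (into the guiding sets) together with coordinatewise shrinking of $Z$; to such a condition attach $[s,Z]$, the set of all $X=\{z_0<z_1<\cdots\}$ extending $s$ with $z_n\in Z(\scr W_{\{z_0,\dots,z_{n-1}\}})$ for all $n\ge|s|$. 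This is exactly a guided Mathias forcing, and the proposition asserts that it has \emph{pure decision} for analytic sets: the trivial condition has an extension with empty stem whose branch set lies inside $\scr X$ or misses $\scr X$, and the required $Z$ is the second coordinate of that extension.

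First I would reduce to the case that $\scr X$ is metrically open. The standard route is to verify that the sets $\scr Y\subseteq[\omega]^\omega$ that are \emph{$\vec{\scr W}$-Ramsey null} (every condition has an extension $(s,Z)$ with $[s,Z]\cap\scr Y=\varnothing$) form a $\sigma$-ideal closed under the Souslin operation; since every analytic set is built from open sets by the Souslin operation, and open sets will be shown to be $\vec{\scr W}$-Ramsey, this handles all analytic $\scr X$. (One may instead unfold: realize $\scr X$ as the projection of a closed subset of $[\omega]^\omega\times\omega^\omega$ and rerun the open-case argument in the product, building a branch through the tree of that closed set alongside the branch through $[\omega]^{<\omega}$; the combinatorics is the same.)

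For $\scr X$ open I would run combinatorial forcing. Call $(s,Z)$ \emph{accepting} if $[s,Z]\subseteq\scr X$, \emph{rejecting} if no same-stem extension of it is accepting, and \emph{deciding} if it is accepting or rejecting; every condition trivially has a same-stem deciding extension. The crux is the lemma: if $(s,Z)$ rejects, then there is a same-stem $Z'\le Z$ such that $(s\cup\{m\},Z')$ rejects for every $m\in Z'(\scr W_s)$. Granting this, a fusion over all stems --- each processed after its proper initial segments --- yields a single $Z^\infty$ below a given starting condition so that, if the trivial condition rejects, then $(s,Z^\infty)$ rejects for every stem $s$ on the branch set $[\varnothing,Z^\infty]$; openness of $\scr X$ (every member of $\scr X$ has a finite initial segment already forcing membership) then gives $[\varnothing,Z^\infty]\cap\scr X=\varnothing$. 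Hence either a same-stem extension of the trivial condition is accepting, giving a $Z$ with $[\varnothing,Z]\subseteq\scr X$, or $Z^\infty$ works, and the proposition follows.

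Proving the lemma is where the hypotheses on the ultrafilters are genuinely used, and it is the step I expect to be the main obstacle. For each $m\in Z(\scr W_s)$ above $\max s$, ask whether $(s\cup\{m\},Z)$ rejects. If the set of ``no'' answers lay in $\scr W_s$, one could choose for each such $m$ an accepting $(s\cup\{m\},Z_m)\le(s\cup\{m\},Z)$ and \emph{amalgamate} the $Z_m$ into a single $Z^*\le Z$ with $[s\cup\{m\},Z^*]\subseteq\scr X$ for $\scr W_s$-almost every $m$, whence $[s,Z^*]\subseteq\scr X$, contradicting that $(s,Z)$ rejects; so the ``yes'' answers lie in $\scr W_s$ and become the new $Z'(\scr W_s)$. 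The amalgamation asks, for every $\scr W\in\scr D$, for a single $Z^*(\scr W)\in\scr W$ refining all the $Z_m(\scr W)$ with thresholds controlled tightly enough that membership in the branch sets propagates. This is where selectivity of $\scr W_s$ is essential --- a suitably diagonal intersection of sets of $\scr W_s$, indexed by an $\scr W_s$-large set, is again in $\scr W_s$ --- where the other members of $\scr D$, being themselves selective and hence P-points, contribute pseudo-intersections, and where the pairwise non-isomorphism enters much as in the proof of Lemma~\ref{pi2}: for distinct $\scr W,\scr W'\in\scr D$ one has sets witnessing their distinctness, and this is what lets the amalgamation keep each coordinate inside its own ultrafilter while routing the branches through the correct sections. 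Carrying out this amalgamation uniformly throughout the fusion, so that the single $Z^\infty$ simultaneously witnesses rejection at every stem, is the technical heart; everything else is the familiar Galvin--Prikry/Mathias bookkeeping.
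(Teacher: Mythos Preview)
The paper does not prove this proposition at all: it is quoted verbatim as Theorem~7 of \cite{sel-hom} and then applied as a black box in the proof of Theorem~\ref{T-wk}. So there is no ``paper's own proof'' to compare against.

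Your sketch is the right shape and is essentially the argument of \cite{sel-hom}: a guided Mathias forcing with conditions $(s,Z)$ where $Z$ assigns sets to the (countably many) ultrafilters appearing, combinatorial forcing for open sets via accept/reject, a one-step rejection-propagation lemma, fusion over stems, and then passage from open to analytic by closure of the completely-Ramsey-null ideal under the Souslin operation (or an unfolding trick). You have correctly located the only nontrivial point: the amalgamation of the $Z_m$'s into a single $Z^*$ in the rejection lemma. What makes this go through is precisely that the index set of the amalgamation lives in $\scr W_s$, so the $\scr W_s$-coordinate is handled by a genuine selective diagonal intersection, while every other coordinate $\scr W\neq\scr W_s$ needs only a P-point pseudo-intersection of countably many sets. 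The ``equal or not isomorphic'' hypothesis is used not quite as you describe (via Lemma~\ref{pi2}-style disjointness) but rather to guarantee that the assignment $Z$ is well-defined on \emph{ultrafilters} rather than on indices $s$: when the same $\scr W$ occurs as $\scr W_s$ and $\scr W_t$, the fusion must return the same set, and it is the non-isomorphism of distinct ultrafilters that prevents any conflict between coordinates during the diagonalization. Your outline is correct; what remains is careful bookkeeping rather than a missing idea.
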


It is important here that $Z$ assigns sets $Z(\scr W)$ to ultrafilters
\scr W, not to their occurrences in the system \sq{\scr
  W_s:s\in[\omega]^{<\omega}}.  That is, if the same ultrafilter \scr W
  occurs as $\scr W_s$ for several sets $s$, then the same $Z(\scr W)$
  is used for all these occurrences of \scr W.

  There is an essentially unique reasonable way to regard our
  partition $\Pi$ of $[\omega^2]^n$, restricted to sets of type $\tau$,
  as a clopen (and therefore analytic) partition of $[\omega]^\omega$
  and to choose ultrafilters $\scr W_s$ among our \scr U and $\scr
  V_n$'s, so that Proposition~\ref{analytic} completes the proof of
  Theorem~\ref{T-wk}.  We spell this out explicitly in what follows.

For our fixed type $\tau$, we say that a variable $x_i$ or $y_i$ is in
\emph{position} $\alpha$ if it is preceded in $\tau$ by exactly
$\alpha$ occurrences of $<$.  The values of $\alpha$ that can occur
here range from 0 to $n+k-1$, where $k$ is the number of equivalence
classes of $x_i$'s as in our discussion of the recurrence for $T(n,k)$
in subsection~\ref{sq-bracket-P}.  Notice that the variables in any
particular position are either an equivalence class of $x$'s or a
single $y$.  

Given any finite or infinite increasing sequence $\vec z$ of natural
numbers, say $z_0<z_1<\dots<z_l\,(<\dots)$, we associate to
it an assignment of values to the variables in some initial segment of
$\tau$ by giving the value $z_\alpha$ to the variable(s) in position
$\alpha$.  If the sequence $\vec z$ is longer than $n+k$ then the
terms from $z_{n+k}$ on have no effect on this assignment.  If
$\vec z$ is shorter than $n+k$ then not all of the variables in $\tau$
receive values.  

For an infinite increasing sequence $\vec z$ (and indeed also for
finite $\vec z$ of length $\geq n+k$), we obtain in this way values,
say $p_i$, for all the $x_i$'s and values, say $q_i$, for all the
$y_i$'s ($1\leq i\leq n$), and we combine these values to form an
$n$-element subset of $\omega^2$:
\[
\bld a(\vec z)=\{\sq{p_1,q_1},\sq{p_2,q_2},\dots,\sq{p_n,q_n}\}.
\]
Note that the pairs \sq{p_i,q_i} occur in this list in order of
increasing $y$-coordinates (because $\tau$ is a type and $\vec z$ is
increasing) and that $\bld a(\vec z)$ has type $\tau$.  We define \scr
X to consist of those infinite subsets of $\omega$ whose increasing
enumeration $\vec z$ has $\bld a(\vec z)$ in the piece $R$ of our
partition $\Pi$.  Thus, membership of a set $A$ in \scr X depends only
on the first $n+k$ elements of $A$.  In particular, \scr X is clopen,
and therefore certainly analytic, in $[\omega]^\omega$.

To finish preparing an application of Proposition~\ref{analytic}, we
define ultrafilters $\scr W_s$ for all finite $s\subseteq\omega$ as
follows.  Let $\vec z$ be the increasing enumeration of $s$, and use
it as above to assign values to the variables in the positions 0
through $\min\{|s|,n+k\}-1$ in $\tau$.
\begin{lsnum}
\item If $|s|<n+k$ and the variables in position $|s|$ (the first
  variables not assigned values) are an equivalence class of $x_i$'s,
  then let $\scr W_s=\scr U$.
\item If $|s|<n+k$ and the variable in position $|s|$ is $y_i$, then
  $x_i$, occurring earlier in $\tau$, has been assigned a value $v$.
  Let $\scr W_s=\scr V_v$.
\item If $|s|\geq n+k$ then set $\scr W_s=\scr U$.  (This case is
  unimportant; we could use any selective ultrafilters here as long as
  we satisfy the ``equal or not isomorphic'' requirement in
  Proposition~\ref{analytic}.) 
\end{lsnum}

Since our \scr X is analytic and our $\scr W_s$ are selective
ultrafilters every two of which are either equal or not isomorphic,
Proposition~\ref{analytic} applies and provides us with sets $Z(\scr
U)$ and $Z(\scr V_r)$ for all $r\in\omega$ such that $\scr X$ contains
all or none of the infinite sets whose increasing enumerations $\vec
z=\sq{z_0<z_1<\dots}$ have $z_\alpha\in Z(\scr
W_{\{z_0,\dots,z_{\alpha-1}\}})$ for all $\alpha$.  It remains to
untangle the definitions and see what this homogeneity property
actually means.  Since membership of an infinite set in \scr X depends
only on that set's first $n+k$ members, the homogeneity property is
really not about infinite sets but about $(n+k)$-element sets and
their increasing enumerations $\vec z$.  We look separately at the
assumption and the conclusion in the homogeneity statement of
Proposition~\ref{analytic}.

  The assumption is that $z_\alpha\in Z(\scr
  W_{\{z_0,\dots,z_{\alpha-1}\}})$, which means that, when we use $\vec
    z$ to give values to the variables in $\tau$, all the values given
    to the $x_i$'s are in $Z(\scr U)$ and the value given to any $y_i$
    is in $Z(\scr V_v)$, where $v$ is the value given to the
    corresponding $x_i$.  This means exactly that $\bld a(\vec z)$ has
    all its elements in the set
\[
Y=\{\sq{p,q}:p\in Z(\scr U) \text{ and } q\in Z(\scr V_p)\}.
\]
Conversely, any $n$-element subset of type $\tau$ in this $Y$, listed
in increasing order of $y$-coordinates, is $\bld a(\vec z)$ for an
enumeration $\vec z$ satisfying the assumption in
Proposition~\ref{analytic}.

Because each $Z(\scr W)$ is in the corresponding ultrafilter \scr W,
we have $Y\in\scr U\sm_r\scr V_r$.  

The conclusion in the homogeneity statement is that all or none of the
sets satisfying the hypothesis are in \scr X.  This means, in view of
our choice of \scr X, that all or none of the associated $\bld a(\vec
z)$ are in $R$.

Collecting all this information, we have that $R$ contains all or none
of the $n$-element subsets of type $\tau$ in $Y$.  Thus, $Y$ is as
required to complete the proof of Theorem~\ref{T-wk}.
\end{pf}

\begin{cor}     \label{non-uf}
Let $X\in(\scr F^{\otimes2})^+$, and let $[X]^n$ be partitioned into
finitely many pieces.  Then there is $Y\subseteq X$ such that
$Y\in(\scr F^{\otimes2})^+$ and 
\begin{ls}
  \item every $n$-element subset of $Y$ realizes an $n$-type and
\item for each $n$-type $\tau$, all the $n$-element subsets of $Y$
  that realize it are in the same piece of the given partition.
\end{ls}
In particular, $[Y]^n$ meets at most $T(n)$ pieces of the given
partition.
\end{cor}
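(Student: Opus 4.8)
The plan is to reduce this to Theorem~\ref{T-wk} by an absoluteness argument; the point is that Theorem~\ref{T-wk} speaks about sums $\scr U\sm_n\scr V_n$ of selective ultrafilters, whose existence is not a theorem of $\ZFC$, whereas the corollary is to be proved outright. First I would record two trivialities. The ideal $\scr F^{\otimes2}$ has $\left(\scr F^{\otimes2}\right)^+$ equal to the collection of $Z\subseteq\omega^2$ for which $\{n:Z(n)\text{ is infinite}\}$ is infinite. And for any non-principal ultrafilters $\scr U$ and $\scr V_n$ one has $\scr U\sm_n\scr V_n\subseteq\left(\scr F^{\otimes2}\right)^+$, since if $B:=\{n:Z(n)\in\scr V_n\}\in\scr U$ then $B$ is infinite and each $Z(n)$ with $n\in B$ is infinite, so $\{n:Z(n)\text{ infinite}\}\supseteq B$ is infinite.

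Now fix $X\in\left(\scr F^{\otimes2}\right)^+$ and a finite coloring $c$ of $[X]^n$. Coding subsets of $\omega^2$ as reals, the conclusion of the corollary is the statement $S(X,c)$: there exists $Y\subseteq X$ with $Y\in\left(\scr F^{\otimes2}\right)^+$ such that every $n$-element subset of $Y$ realizes an $n$-type (in the purely combinatorial sense used in the proof of Theorem~\ref{T-wk}: enumerate the set as $\sq{p_1,q_1},\dots,\sq{p_n,q_n}$ with $q_1<\dots<q_n$ and check the equations and inequalities of the type under $x_i\mapsto p_i$, $y_i\mapsto q_i$) and, for each $n$-type $\tau$, the $n$-element subsets of $Y$ realizing $\tau$ are $c$-monochromatic. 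The part of $S(X,c)$ after the leading ``$\exists Y$'' is arithmetic in $Y,X,c$ --- membership in $\left(\scr F^{\otimes2}\right)^+$, realizing a given (finite) type, and having a given $c$-color are all arithmetic, and the quantifier over the finitely many $n$-types is bounded --- so $S(X,c)$ is $\Sigma^1_1$, hence $\Sigma^1_2$, with real parameters $X$ and $c$. By Shoenfield absoluteness it therefore suffices to verify $S(X,c)$ in the inner model $M=L[X\oplus c]$, which satisfies $\ZFC+\mathrm{CH}$ and in which $X$, $c$, and all the arithmetic notions above mean exactly what they mean in $\V$.

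Working inside $M$: since $\mathrm{CH}$ holds, a standard recursion of length $\omega_1$ (diagonalizing against all functions $\omega\to\omega$ for selectivity and against all permutations of $\omega$ for non-isomorphism, starting the relevant towers from the prescribed sets) produces pairwise non-isomorphic selective ultrafilters $\scr U$ and $\scr V_n$ for $n\in\omega$ with $A:=\{n:X(n)\text{ infinite}\}\in\scr U$ and $X(n)\in\scr V_n$ for every $n\in A$. Then $A\subseteq\{n:X(n)\in\scr V_n\}\in\scr U$, so $X\in\scr U\sm_n\scr V_n$. Extend $c$ arbitrarily to a finite partition $\Pi$ of all of $[\omega^2]^n$. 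The proof of Theorem~\ref{T-wk}, carried out in $M$, produces not merely that $\scr U\sm_n\scr V_n$ is $(n,T(n))$-weakly Ramsey but, more precisely, a set $W\in\scr U\sm_n\scr V_n$ such that every $n$-element subset of $W$ realizes an $n$-type and any two $n$-element subsets of $W$ realizing the same type lie in the same piece of $\Pi$. Put $Y=W\cap X$. Then $Y\subseteq X$; $Y\in\scr U\sm_n\scr V_n\subseteq\left(\scr F^{\otimes2}\right)^+$; every $n$-element subset of $Y$ realizes a type; and the $n$-element subsets of $Y$ realizing any fixed type all lie in one piece. Since there are $T(n)$ types, $[Y]^n$ meets at most $T(n)$ pieces. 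Thus $S(X,c)$ holds in $M$, hence in $\V$.

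The only non-routine step is the appeal to selective ultrafilters, which is exactly what the descent to $L[X\oplus c]$ together with Shoenfield absoluteness is designed to circumvent; the care required is just to confirm that $S(X,c)$ has the right ($\Sigma^1_2$) logical complexity and that ``positive'', ``realizes a type'', and membership in a given sum are all arithmetic in the relevant parameters, so that they transfer between $M$ and $\V$. Equivalently, one could pass to a generic extension of $\V$ by a $\sigma$-closed forcing that makes $\mathrm{CH}$ hold; such a forcing adds no reals, so it leaves $X$, $c$, and $\left(\scr F^{\otimes2}\right)^+$ untouched, and the same construction then produces the desired $Y$ in the extension, whence $S(X,c)$ reflects back to $\V$.
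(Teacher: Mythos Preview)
Your proof is correct and follows essentially the same approach as the paper: under CH, build pairwise non-isomorphic selective ultrafilters containing the relevant sets, apply the detailed conclusion of Theorem~\ref{T-wk}, intersect with $X$, and then eliminate CH via Shoenfield absoluteness (or, alternatively, a $\sigma$-closed forcing adding no reals). Your passage to $L[X\oplus c]$ rather than $L$ is in fact slightly more careful than the paper's phrasing, since it keeps the parameters $X$ and $c$ in the inner model.
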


\begin{pf}
  Suppose for a moment that the continuum hypothesis holds, so that
  there are $2^{\ger c}$ non-isomorphic selective ultrafilters
  containing any given infinite subset of $\omega$.  Let
  $B=\{n\in\omega:X(n)\text{ is infinite}\}$.  The assumption that
  $X\in(\scr F^{\otimes2})^+$ means that $B$ is infinite.  Choose
  non-isomorphic selective ultrafilters \scr U and $\scr V_n$ for all
  $n\in\omega$ so that $B\in\scr U$ and $X(n)\in\scr V_n$ for all
  $n\in B$.  Then $X\in\scr U\sm_n\scr V_n$.  Extend the given
  partition arbitrarily to all of $[\omega^2]^n$ and invoke the proof
  of Theorem~\ref{T-wk} to get a set $Y\in\scr U\sm_n\scr V_n$ in
  which every $n$-element subset realizes a type and different subsets
  realizing the same type are in the same piece of our partition.
  Intersecting $Y$ with $X$, we get a set, still in $\scr U\sm_n\scr
  V_n$ and thus in $(\scr F^{\otimes2})^+$, with the required
  homogeneity property for the given partition of $[X]^n$.  This
  completes the proof under the assumption of the continuum
  hypothesis.

There are (at least) two ways to show that the result remains true in
the absence of the continuum hypothesis.  One way is to pass to a forcing
extension that has no new reals but satisfies the continuum
hypothesis.  The corollary holds in the extension, but it is only
about reals, so it must have held in the ground model.  The other way
is note that the  corollary is a $\Pi^1_2$ statement, so, by
Shoenfield's absoluteness theorem, it is absolute between the whole
universe $V$ and the constructible sub-universe $L$.  Since $L$
satisfies the continuum hypothesis, the corollary holds there and
thus also holds in $V$.
\end{pf}

\section{Ultrafilters Generically Extending $\scr F\otimes\scr F$}  
\label{sec:gen1}

We turn now to the main subject of this paper, ultrafilters on
$\omega^2$ produced by forcing with $\bbb P=(\scr F^{\otimes 2})^+$
partially ordered by inclusion.  Recall that we write $\scr F$ for the
Fr\'echet filter, the set of cofinite subsets of $\omega$.  Its Fubini
square $\scr F^{\otimes2}$ is the filter on $\omega^2$ consisting of
sets $X$ such that the sections $X(n)$ are cofinite for cofinitely
many $n$.  A convenient basis for $\scr F^{\otimes2}$ is the family of
``wedges''
\[
\{\sq{x,y}\in\omega^2:x>n\text{ and }y>f(x)\}.
\]
where $n$ ranges over natural numbers and $f$ ranges over functions
$\omega\to\omega$.

Our forcing conditions in \bbb P are the sets of
positive measure with respect to $\scr F^{\otimes2}$; these are the
sets $X$ whose sections $X(n)$ are infinite for infinitely many $n$.

The partially ordered set \bbb P is not separative.  Its separative
quotient is obtained by identifying any two conditions that have the
same intersection with a set in $\scr F^{\otimes2}$.  So the
separative quotient is the Boolean algebra
$\scr P(\omega^2)/\scr F^{\otimes2}$.

We call a forcing condition $X\in\bbb P$ \emph{standard} if every
nonempty section $X(n)$ is infinite.  Every condition can be shrunk to
a standard one by simply deleting its finite sections; the shrunk
condition is equivalent, in the separative quotient, to the original
condition.  We can therefore assume, without loss of generality, that
we always work with standard conditions. 

\begin{la} \label{la:noreals}
  The separative quotient of \bbb P is countably closed.  In
  particular, forcing with \bbb P adds no new reals.
\end{la}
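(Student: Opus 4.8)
The plan is to verify countable closure directly and then read off the ``no new reals'' conclusion. Throughout I will use the concrete description of the relevant orderings: writing $\I=(\scr F^{\otimes2})^{\ast}$ for the dual ideal, a set belongs to $\I$ iff all but finitely many of its sections are finite, a set belongs to $\bbb P=(\scr F^{\otimes2})^{+}$ iff infinitely many of its sections are infinite, and in the separative quotient one has $[Y]\leq[X]$ iff $Y\setminus X\in\I$. I will also pass freely to standard conditions, since deleting a set all of whose sections are finite removes only a member of $\I$ and hence does not change the class in the separative quotient; note too that deleting a member of $\I$ from a positive set leaves a positive set.

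So let $[X_0]\geq[X_1]\geq\cdots$ be a descending sequence in the separative quotient, and fix standard representatives $X_n$. The first step is to replace the $X_n$ by standard conditions $X_n'$ with $X_0'\supseteq X_1'\supseteq\cdots$ (genuine inclusion now, not merely in the quotient) and $[X_n']=[X_n]$. This is a routine induction: given $X_n'$, from $[X_{n+1}]\leq[X_n']$ we get $X_{n+1}\setminus X_n'\in\I$, so $X_{n+1}\cap X_n'$ still represents $[X_{n+1}]$ and is positive; deleting its finite sections then produces a standard $X_{n+1}'\subseteq X_n'$ with $[X_{n+1}']=[X_{n+1}]$.

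The heart of the argument is a diagonalization producing a common lower bound. Let $A_n=\{k:X_n'(k)\neq\emptyset\}$; each $A_n$ is infinite because $X_n'$ is positive and standard, and the $A_n$ are $\subseteq$-decreasing. Pick $k_0<k_1<\cdots$ with $k_n\in A_n$, and set $X=\bigcup_{n\in\omega}\bigl(\{k_n\}\times X_n'(k_n)\bigr)$. Then $X$ is standard, and it lies in $\bbb P$ since its nonempty sections are exactly the $X(k_n)=X_n'(k_n)$, of which there are infinitely many, each infinite. For each fixed $m$: if $n\geq m$ then $X(k_n)=X_n'(k_n)\subseteq X_m'(k_n)$ because $\langle X_j'\rangle$ is decreasing, so $(X\setminus X_m')(k_n)=\emptyset$; the only other possibly nonempty sections of $X\setminus X_m'$ lie among the finitely many $k_0,\dots,k_{m-1}$. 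Hence $X\setminus X_m'\in\I$, i.e.\ $[X]\leq[X_m']=[X_m]$, so $[X]$ is a lower bound for the given sequence and the separative quotient is countably closed.

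For the final assertion, recall that forcing with $\bbb P$ is equivalent to forcing with its separative quotient, and a countably closed notion of forcing adds no new reals. I do not expect any genuine obstacle; the only points needing attention are the bookkeeping in the reduction step --- checking that each intersection remains positive and that deleting finite sections preserves the inclusions --- and, before that, the routine verification that $[Y]\leq[X]$ in the separative quotient is equivalent to $Y\setminus X\in\I$, which underlies the concrete description used throughout.
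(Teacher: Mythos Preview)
Your proof is correct and follows essentially the same route as the paper: replace the descending sequence by one with genuine inclusions, then diagonalize by choosing one column $k_n$ from each $X_n'$ and letting the lower bound be $\bigcup_n\{k_n\}\times X_n'(k_n)$. You are in fact somewhat more careful than the paper about the bookkeeping (standardizing at each step and spelling out why $[Y]\le[X]$ amounts to $Y\setminus X\in\I$), but the core idea is identical.
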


\begin{pf}
  The second assertion of the lemma is a well-known consequence of the
  first, so we just verify the first.  Given a decreasing
  $\omega$-sequence in $\scr P(\omega^2)/\scr F^{\otimes2}-\{0\}$, we
  can choose representatives in \bbb P, say $A_0\supseteq
  A_1\supseteq\dots$, where we have arranged actual inclusion (rather
  than inclusion modulo $\scr F^{\otimes2}$) by intersecting each set
  in the sequence with all its predecessors.  Each $A_n$, being in
  \bbb P, has infinitely many infinite sections, so we can choose
  numbers $x_0<x_1<\dots$ such that $A_n(x_n)$ is infinite for every
  $n$.  Let $B=\{\sq{x_n,y}:n\in \omega\text{ and }y\in A_n\}$.  Then
  $B\in\bbb P$ and $B$ is included, modulo $\scr F^{\otimes2}$, in
  each $A_n$.  (In fact, it's included modulo $\scr
  F\otimes\{\omega\}$.)
\end{pf}

In what follows, we work with a ground model $V$ and its forcing
extensions.  Depending on the reader's preferences, $V$ can be the
whole universe of sets, in which case its forcing extensions are
Boolean valued models; or $V$ can be a countable transitive model of
ZFC, in which case forcing extensions are also countable transitive
models; or $V$ can be an arbitrary model of ZFC, in which case its
forcing extensions are again merely models of ZFC.  What we do below
is valid in any of these contexts.  Unless the contrary is specified,
``generic'' means generic over $V$.

\begin{cor} \label{cor:genericult}
Any generic filter $\scr G\subseteq\bbb P$ is, in the forcing
extension $V[\scr G]$, an ultrafilter on $\omega^2$.
\end{cor}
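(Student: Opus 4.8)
The plan is to use Lemma~\ref{la:noreals} together with the standard theory of forcing with a separative poset. Since $\bbb P$ and its separative quotient $\scr P(\omega^2)/\scr F^{\otimes2}$ yield the same forcing extensions (a generic $\scr G\subseteq\bbb P$ and the induced generic ultrafilter on the Boolean algebra determine each other), it suffices to observe that in $V[\scr G]$ the set $\scr G$ has the ultrafilter properties. First I would note that $\scr G$ is a filter base on $\omega^2$: it is closed under finite intersection because $\bbb P$ is closed under finite intersection and any two conditions in $\scr G$ are compatible (being in a filter), and it contains no finite set since every condition in $\bbb P$ has infinitely many infinite sections. So the upward closure of $\scr G$ is a proper filter on $\omega^2$; I will simply call this filter $\scr G$ as well.

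The key step is the dichotomy: for every $a\subseteq\omega^2$ in $V[\scr G]$, either $a\in\scr G$ or $\omega^2\setminus a\in\scr G$. By Lemma~\ref{la:noreals}, forcing with $\bbb P$ adds no new reals, so every subset $a$ of $\omega^2$ in $V[\scr G]$ already lies in $V$. Now fix such an $a\in V$ and consider the set
\[
D_a=\{X\in\bbb P: X\subseteq a \text{ or } X\subseteq\omega^2\setminus a\}.
\]
I claim $D_a$ is dense in $\bbb P$. Given any condition $Y\in\bbb P$, $Y$ has infinitely many infinite sections; for each such section $Y(n)$, at least one of $Y(n)\cap a(n)$ or $Y(n)\setminus a(n)$ is infinite. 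Hence either infinitely many of the sections $Y(n)\cap a(n)$ are infinite, or infinitely many of the sections $Y(n)\setminus a(n)$ are infinite (or both). In the first case $Y\cap a\in\bbb P$ and $Y\cap a\subseteq a$; in the second $Y\setminus a\in\bbb P$ and $Y\setminus a\subseteq\omega^2\setminus a$. Either way we have found an extension of $Y$ in $D_a$, so $D_a$ is dense. Since $D_a\in V$ and $\scr G$ is generic, $\scr G\cap D_a\neq\emptyset$, so there is $X\in\scr G$ with $X\subseteq a$ or $X\subseteq\omega^2\setminus a$; by upward closure this gives $a\in\scr G$ or $\omega^2\setminus a\in\scr G$, as desired.

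Combining these, $\scr G$ is a proper filter on $\omega^2$ deciding every subset, i.e. an ultrafilter, in $V[\scr G]$. I do not anticipate a serious obstacle: the only point needing care is the reduction to subsets $a\in V$, which is exactly what Lemma~\ref{la:noreals} provides, and the verification that $D_a$ is dense, which reduces to the elementary pigeonhole observation on sections given above. (One should also remark, for definiteness, that ``ultrafilter in $V[\scr G]$'' is the intended reading; there is no claim that $\scr G$ remains an ultrafilter in any larger model, and none is needed.)
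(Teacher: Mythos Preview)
Your proposal is correct and follows essentially the same approach as the paper: use Lemma~\ref{la:noreals} to reduce to subsets $a\in V$, then observe that the set $D_a=\{X\in\bbb P:X\subseteq a\text{ or }X\subseteq\omega^2\setminus a\}$ is dense and in $V$, so genericity gives the dichotomy. You simply add more detail than the paper (the explicit pigeonhole argument for density and the filter verification), though your remark that ``$\bbb P$ is closed under finite intersection'' is not literally true---what you need, and what holds, is that any two \emph{compatible} conditions have their intersection in $\bbb P$.
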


\begin{pf}
Since every subset $X$ of $\omega$ in $V[\scr G]$ is in $V$, it
suffices to notice that, for each such $X$, the set $\{A\in\bbb
P:A\subseteq X\text{ or }A\subseteq\omega-X\}$ is in $V$ and dense in
\bbb P, so it meets \scr G.
\end{pf}

In view of the corollary, we have written \scr G for the generic
filter, rather than the more customary $G$, since we have been using
script letters for ultrafilters.

The rest of this section is devoted to establishing the basic
combinatorial properties of \bbb P-generic ultrafilters \scr G.  Many
of these properties are the same as what we established in the
preceding section for selective-indexed sums of non-isomorphic
selective ultrafilters.  In particular, \scr G is not a P-point, and
it satisfies the strongest weak-Ramsey property compatible with not
being a P-point.  But \scr G also differs in an essential way from the
sums considered earlier, in that it is not a limit of countably many
other non-principal ultrafilters.  

\subsection{Basic Properties}

In this subsection, we collect some of the basic facts about \bbb
P-generic ultrafilters \scr G on $\omega^2$.  From now on, \scr G will
always denote such an ultrafilter and $\dot{\scr G}$ its canonical
name in the forcing language.

\begin{la}      \label{force-in-G}
For $A\in\bbb P$ and $X\subseteq\omega^2$, we have $A\forces
X\in\dot{\scr G}$ if and only if $A\subseteq X$ modulo $\scr
F^{\otimes2}$.  The ultrafilter \scr G is an extension of $\scr
F^{\otimes2}$. 
\end{la}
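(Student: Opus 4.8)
The plan is to reduce both assertions to the standard behaviour of a generic filter over a separative poset, using only facts already in hand: that $\bbb P$ adds no reals (Lemma~\ref{la:noreals}), so that every $X\subseteq\omega^2$ under consideration is a genuine ground-model set, and that the generic filter is an ultrafilter on $\omega^2$ (Corollary~\ref{cor:genericult}). The one arithmetical translation I will keep in mind is that, for $A\subseteq\omega^2$, membership $A\in\bbb P=(\scr F^{\otimes2})^+$ says exactly that $A$ has infinitely many infinite sections, while ``$A\subseteq X$ modulo $\scr F^{\otimes2}$'' says $A\setminus X\in(\scr F^{\otimes2})^{\ast}$, i.e. $A\setminus X\notin\bbb P$. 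I will also use repeatedly that whenever $A,B\in\scr G$ one has $A\cap B\in\scr G$: since $\scr G$ is a filter on the poset there is a common lower bound $C\in\bbb P$ with $C\subseteq A\cap B$, so $A\cap B\in\bbb P$, and then upward closure of $\scr G$ inside $\bbb P$ gives $A\cap B\in\scr G$. (Equivalently, one may quote that $\bbb P$ and its separative quotient $\scr P(\omega^2)/\scr F^{\otimes2}$ yield the same forcing, together with the textbook fact that a condition forces $\check q\in\dot{\scr G}$ iff it lies below $q$ in the separative quotient.)

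For the equivalence, I would first prove the ``if'' direction. Suppose $A\subseteq X$ modulo $\scr F^{\otimes2}$, i.e. $A\setminus X\notin\bbb P$, and let $\scr G$ be generic with $A\in\scr G$; I claim $X\in\scr G$. If not, then $\omega^2\setminus X\in\scr G$ by ultrafilterness, whence $A\cap(\omega^2\setminus X)=A\setminus X$ lies in $\scr G\subseteq\bbb P$, contradicting $A\setminus X\notin\bbb P$. Hence $A\forces X\in\dot{\scr G}$. The ``only if'' direction then follows by contraposition: if $A\not\subseteq X$ modulo $\scr F^{\otimes2}$, then $A\setminus X\in\bbb P$, so $A\setminus X$ is a condition with $A\setminus X\subseteq A$; since $A\setminus X\subseteq\omega^2\setminus X$, the ``if'' direction applied to $\omega^2\setminus X$ gives $A\setminus X\forces(\omega^2\setminus X)\in\dot{\scr G}$, i.e. $A\setminus X\forces X\notin\dot{\scr G}$. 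Thus a condition below $A$ forces the negation, so $A$ does not force $X\in\dot{\scr G}$.

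For the second sentence, I would apply the equivalence just proved with $A$ the maximum condition $\omega^2$: for $X\in\scr F^{\otimes2}$ we have $\omega^2\setminus X\in(\scr F^{\otimes2})^{\ast}$, i.e. $\omega^2\setminus X\notin\bbb P$, which is precisely ``$\omega^2\subseteq X$ modulo $\scr F^{\otimes2}$'', so $\omega^2\forces X\in\dot{\scr G}$ and hence $X\in\scr G$ in every extension; therefore $\scr F^{\otimes2}\subseteq\scr G$. (Alternatively, and without invoking the equivalence, one checks directly that $\{A\in\bbb P:A\subseteq X\}$ is dense below any $B\in\bbb P$, because cofinitely many sections of $X$ are cofinite and hence $B\cap X\in\bbb P$; genericity puts a subset of $X$ into $\scr G$, and upward closure puts $X$ itself in.) There is no serious obstacle here; the only point requiring a moment's care is the closure-under-intersection step for $\scr G$, which is exactly the place where one uses that $\scr G$ is downward directed as a filter on $\bbb P$ and that any lower bound witnesses $A\cap B\in(\scr F^{\otimes2})^+$.
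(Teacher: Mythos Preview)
Your proof is correct and arrives at the same conclusion as the paper, though by a slightly more hands-on route. The paper dispatches the first assertion in one line by quoting the general fact that, after passing to the separative quotient, a condition $p$ forces $\check q\in\dot{\scr G}$ if and only if $p\leq q$; you instead prove both implications directly from the ultrafilter property of $\scr G$ and the observation $\scr G\subseteq\bbb P$, which amounts to unpacking that general fact in this specific case (and you do mention the abstract route parenthetically). For the second assertion the paper argues by duality---$\scr G\subseteq(\scr F^{\otimes2})^+$ plus ultrafilterness gives $\scr F^{\otimes2}\subseteq\scr G$---whereas you instantiate the first part at $A=\omega^2$; both are fine. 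Your version is more self-contained; the paper's is shorter.
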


\begin{pf}
The first assertion of the lemma is a consequence of the well-known
facts that forcing is unchanged when a notion of forcing is replaced
by its separative quotient and that, for a separative notion of
forcing, $p$ forces $q$ to belong to the canonical generic filter if
and only if $p\leq q$.  

It follows from the first assertion that $\scr G \subseteq(\scr
F^{\otimes2})^+$.  Since \scr G is an ultrafilter, this is equivalent
to $\scr G\supseteq\scr F^{\otimes 2}$.
\end{pf}

\begin{cor} \label{cor:notp}
The first projection $\pi_1:\omega\to\omega$ is not finite-to-one or
constant on any set in \scr G.  Thus, \scr G is not a P-point.
\end{cor}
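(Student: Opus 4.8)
The plan is to deduce the statement directly from Lemma~\ref{force-in-G}, which gives $\scr G\subseteq(\scr F^{\otimes2})^+$. Hence it suffices to check that $\pi_1$ is neither finite-to-one nor constant on \emph{any} set $X\in(\scr F^{\otimes2})^+$; the corollary then follows because every member of $\scr G$ is such an $X$.

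For this I would just unwind the definition of $(\scr F^{\otimes2})^+$: a set $X\subseteq\omega^2$ lies in it precisely when infinitely many of its sections $X(n)$ are infinite. Fixing one such section $X(n_0)$, the fiber $\pi_1^{-1}(\{n_0\})\cap X=\{n_0\}\times X(n_0)$ is infinite, so $\pi_1$ is not finite-to-one on $X$; and since infinitely many sections $X(n)$ are nonempty, $\pi_1(X)$ is infinite, so $\pi_1$ is not constant on $X$. This is nothing more than inspection of the definitions, so I expect no real obstacle; the only thing to keep in mind is that, since forcing with $\bbb P$ adds no new reals, each member of $\scr G$ is coded by a set in $V$ and the property of being $\scr F^{\otimes2}$-positive is absolute, so the argument applies verbatim to the sets in $\scr G$.

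Finally, to obtain the last sentence, I would observe that the function $\pi_1:\omega^2\to\omega$ witnesses the failure of the P-point property for $\scr G$: by what was just shown, there is no set in $\scr G$ on which $\pi_1$ is finite-to-one or constant.
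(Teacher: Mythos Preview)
Your proof is correct and follows exactly the paper's approach: the paper's proof is the single line ``$\pi_1$ is not finite-to-one or constant on any set in $(\scr F^{\otimes2})^+$,'' and you have simply unpacked this by invoking Lemma~\ref{force-in-G} for the inclusion $\scr G\subseteq(\scr F^{\otimes2})^+$ and spelling out why positivity forces infinite fibers and infinite projection. The absoluteness remark is unnecessary (Lemma~\ref{force-in-G} already gives the inclusion directly), but it does no harm.
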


\begin{pf}
  $\pi_1$ is not finite-to-one or constant on any set in $(\scr
  F^{\otimes2})^+$.
\end{pf}

\begin{prop}    \label{p1-gen}
The image $\scr U=\pi_1(\scr G)$ of \scr G under the first projection
is $V$-generic for $\scr P(\omega)/\scr F$.  In particular, it is a
selective ultrafilter on  $\omega$.
\end{prop}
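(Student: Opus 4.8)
The plan is to verify the two standard conditions for genericity of $\scr U = \pi_1(\scr G)$ over $\scr P(\omega)/\scr F$: that $\scr U$ meets every dense set of the forcing $(\scr F)^+$ that lies in $V$, and then to invoke Todorcevic's (or the classical) characterization — or more simply, the standard fact that a $V$-generic filter for $\scr P(\omega)/\scr F$ is a selective ultrafilter — to get the ``in particular.'' Actually the cleanest route avoids even the density formulation: I would show directly that for every $V$-partition of $\omega$ (equivalently, every $f:\omega\to\omega$ in $V$, every maximal antichain, etc.) the generic image $\scr U$ behaves correctly, by pushing forward along $\pi_1$ the corresponding density argument in $\bbb P$.

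Concretely, first I would recall that $\scr U = \{a\subseteq\omega : \pi_1^{-1}(a)\in\scr G\}$, which is an ultrafilter on $\omega$ since $\scr G$ is one on $\omega^2$ and $\pi_1^{-1}$ respects Boolean operations; it extends $\scr F$ because $\scr G$ extends $\scr F^{\otimes2}$ (Lemma~\ref{force-in-G}) and $\pi_1^{-1}$ of a cofinite set is in $\scr F^{\otimes2}$. The heart of the matter is genericity: given a maximal antichain (or just a dense open set) $D\in V$ in $\scr P(\omega)/\scr F = (\scr F)^+/\!\!\approx$, I want to see that $\scr U$ meets $D$. The key observation is that the map $a\mapsto \pi_1^{-1}(a) = a\times\omega$ sends positive sets to positive sets and, crucially, sends a dense subset of $(\scr F)^+$ to a \emph{predense} subset of $\bbb P=(\scr F^{\otimes2})^+$: indeed, if $A\in\bbb P$ is standard, then $\{n : A(n)\text{ infinite}\}$ is an infinite, hence $\scr F$-positive, set in $V$, so it has a lower bound $b$ in $D$, and then $\pi_1^{-1}(b)$ is compatible with $A$ in $\bbb P$ (their intersection is $\{\sq{n,m}\in A : n\in b\}$, still positive). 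Hence $\{\pi_1^{-1}(b) : b\in D\}$ is predense in $\bbb P$, so the generic $\scr G$ meets it; unravelling, $\scr U$ meets $D$.

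The main obstacle is purely bookkeeping: making the predensity argument above precise modulo $\scr F^{\otimes2}$ versus actual inclusion, and matching ``dense subsets of the separative quotient'' with ``dense subsets of $(\scr F)^+$'' — but this is exactly the sort of routine verification the paper has been doing for $\bbb P$ itself. Once genericity is established, the ``in particular'' is immediate: it is a classical fact (due essentially to Mathias, and explicitly recalled in the introduction via the Todorcevic characterization) that any $V$-generic ultrafilter for $\scr P(\omega)/\scr F$ is selective, since for any $f:\omega\to\omega$ in $V$ the set of conditions on which $f$ is one-to-one or constant is dense in $(\scr F)^+$. I would close by remarking that this also re-proves, via $\pi_1$, that $\scr G$ restricted along $\pi_1$ is as well-behaved as possible, consistent with Lemma~\ref{pi1}(3) in the sum case.
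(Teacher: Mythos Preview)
Your proposal is correct and follows essentially the same route as the paper. The paper packages the argument as showing that $c:X\mapsto\pi_1^{-1}(X)$ is a complete embedding of $[\omega]^\omega$ into $\bbb P$ (i.e., maximal antichains go to maximal antichains), while you phrase it equivalently as ``dense sets go to predense sets''; the core computation---pass from $A\in\bbb P$ to $B=\{n:A(n)\text{ infinite}\}$, pick $b\subseteq B$ in the given dense set, and observe $A\cap\pi_1^{-1}(b)\in\bbb P$---is identical in both.
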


\begin{pf}
  Consider the function $c:[\omega]^\omega\to\bbb P$ that sends each
  infinite $X\subseteq\omega$ to the cylinder over it in $\omega^2$,
  i.e., $c(X)={\pi_1}^{-1}(X)$.  We claim that this function is a
  complete embedding of the forcing notion
  $([\omega]^\omega,\subseteq)$ into \bbb P.  In the first place, $c$
  clearly preserves the ordering relation $\subseteq$.  It also
  preserves incompatibility; if $X$ and $Y$ are incompatible in
  $[\omega]^\omega$, i.e., if their intersection is finite, then the
  intersection of $c(X)$ and $c(Y)$ is not in $(\scr F^{\otimes2})^+$,
  so $c(X)$ and $c(Y)$ are also incompatible.  Finally, if \scr A is a
  maximal antichain in $[\omega]^\omega$, then the antichain
  $\{c(A):A\in\scr A\}$ is maximal in \bbb P.  To see this, consider
  an arbitrary $X\in\bbb P$.  We intend to show that it is compatible
  with $c(A)$ for some $A\in\scr A$.  Let $B=\{n\in\omega:X(n)\text{
    is infinite}\}$.  Then $B$ is infinite.  By maximality of \scr A,
  find $A\in\scr A$ such that $A\cap B$ is infinite.  Then $X\cap
  c(A\cap B)$ is in \bbb P and is an extension of both $X$ and $c(A)$,
  as required.  This completes the proof that $c$ is a complete
  embedding.

By a well-known general fact about complete embeddings, it follows
that $c^{-1}(\scr G)$ is a $V$-generic subset of $[\omega]^\omega$.
But $c^{-1}(\scr G)$ is exactly $\pi_1(\scr G)$, so the first assertion
of the lemma is proved.  The second follows, as it it well-known that
the generic object adjoined by forcing with $[\omega]^\omega$ is a
selective ultrafilter on $\omega$.
\end{pf}

\subsection{Partition Property and Consequences}

\begin{thm}     \label{wk-part-gen}
  The generic ultrafilter \scr G is $(n,T(n))$-weakly Ramsey for every
  $n$.  In more detail, for every partition $\Pi$ of $[\omega^2]^n$
  into finitely many pieces, there is a set $H\in\scr G$ such that
  \begin{ls}
    \item every $n$-element subset of $H$ realizes an $n$-type, and
\item for each $n$-type $\tau$, all the $n$-element subsets of $H$
  that realize $\tau$ are in the same piece of $\Pi$.
  \end{ls}
\end{thm}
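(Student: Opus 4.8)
The plan is to reduce the statement about the generic ultrafilter $\scr G$ to the already-established Corollary~\ref{non-uf}, which is a purely combinatorial (ZFC) statement about conditions in $(\scr F^{\otimes2})^+$. Fix a partition $\Pi$ of $[\omega^2]^n$ into finitely many pieces; note that $\Pi\in V$, since by Lemma~\ref{la:noreals} forcing with $\bbb P$ adds no new reals, so it suffices to find $H\in\scr G$ with the stated homogeneity. Because $\scr G$ meets every dense subset of $\bbb P$ lying in $V$, it is enough to show that the set
\[
D=\{Y\in\bbb P: \text{every } n\text{-element subset of } Y \text{ realizes a type, and different } n\text{-subsets of } Y
\]
realizing the same $n$-type lie in the same piece of $\Pi$\} is dense in $\bbb P$. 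But that is exactly what Corollary~\ref{non-uf} provides: given any $X\in\bbb P=(\scr F^{\otimes2})^+$, applying Corollary~\ref{non-uf} to $X$ and the partition $\Pi\restr[X]^n$ yields $Y\subseteq X$ with $Y\in(\scr F^{\otimes2})^+$ enjoying precisely these two properties. Hence $D$ is dense, $\scr G\cap D\neq\emptyset$, and any $H\in\scr G\cap D$ witnesses the theorem.

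The remaining points to check are routine. First, I would note explicitly that the two bullet conditions in the theorem are the same two bullet conditions appearing in Corollary~\ref{non-uf}, so no translation is needed. Second, the final clause of the theorem, that $[H]^n$ meets at most $T(n)$ pieces of $\Pi$, is immediate: every $n$-element subset of $H$ realizes one of the $T(n)$ many $n$-types, and all subsets realizing a given type fall in a single piece, so the number of pieces met is at most the number of types. Third, one should recall from Lemma~\ref{force-in-G} that $\scr G$ extends $\scr F^{\otimes2}$ and is an ultrafilter on $\omega^2$, so $H\in\scr G$ is a legitimate witness for $(n,T(n))$-weak Ramseyness; but strictly speaking this is already encoded in $H\in\scr G\subseteq\bbb P$.

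I do not expect any genuine obstacle here: all the real work was done in Corollary~\ref{non-uf} (which in turn rested on Theorem~\ref{T-wk} and the Galvin--Prikry-style Proposition~\ref{analytic} from \cite{sel-hom}, together with a Shoenfield absoluteness argument to remove CH). The only mild subtlety worth a sentence is the genericity/density packaging: one must observe that the partition $\Pi$, being (coded by) a real, lies in the ground model, so that the dense set $D$ is definable in $V$ and genuinely meets $\scr G$. Once that is said, the proof is a one-line appeal to density plus Corollary~\ref{non-uf}.
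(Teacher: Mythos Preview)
Your proposal is correct and follows essentially the same approach as the paper's own proof: both reduce immediately to Corollary~\ref{non-uf} via a density argument, noting that the partition lies in the ground model because the forcing adds no reals. Your write-up is in fact more detailed than the paper's two-sentence proof, but the logical content is identical.
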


\begin{pf}
 The first assertion of the the theorem follows from the second
 because there are only $T(n)$ $n$-types.  The second assertion, in
 turn, follows immediately from Corollary~\ref{non-uf}, which says
 that conditions $H$ with the desired properties are dense in \bbb P,
 so the generic \scr G must  contain such an $H$.
\end{pf}

Note that the $T(n)$ in the theorem is optimal, because of
Corollary~\ref{weak-to-P} and the fact that \scr G is not a P-point.  

The next two corollaries could be proved by direct density arguments,
but it seems worthwhile to point out how they follow from the
partition properties in Theorem~\ref{wk-part-gen}.

\begin{cor}
  The second projection $\pi_2$ is one-to-one on a set in \scr G.
\end{cor}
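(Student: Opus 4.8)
The plan is to deduce this immediately from Theorem~\ref{wk-part-gen} in the case $n=2$. First I would invoke that theorem for the trivial partition of $[\omega^2]^2$ into a single piece (any finite partition works equally well), obtaining a set $H\in\scr G$ every $2$-element subset of which realizes a $2$-type.

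The second, and only substantive, step is to observe that a two-element subset of $\omega^2$ can realize a $2$-type only if its two points $\sq{p_1,q_1}$ and $\sq{p_2,q_2}$ can be enumerated so that $q_1<q_2$. This is built into the way realization of a type by a subset of $\omega^2$ was set up in the proof of Theorem~\ref{T-wk}, where the points of a realizing set are listed in strictly increasing order of their second coordinates; equivalently, in any $2$-type the variables $y_1$ and $y_2$ are separated by a $<$ sign, since equality signs are permitted only between the $x_i$'s. Consequently, no two distinct points of $H$ can share a second coordinate: if $\sq{p,q}$ and $\sq{p',q'}$ were both in $H$ with $q=q'$ and $p\neq p'$, then $\{\sq{p,q},\sq{p',q'}\}$ would realize no $2$-type, contradicting the choice of $H$. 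Hence $\pi_2$ is one-to-one on $H$, and $H\in\scr G$, which is exactly the assertion.

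I expect essentially no obstacle here; the argument is just unwinding definitions. As the remarks preceding the corollary indicate, one could alternatively give a direct density argument --- given a standard condition $X$, thin its infinitely many infinite sections down to pairwise disjoint infinite sets --- but that requires a small diagonalization (one cannot naively pick disjoint infinite subsets of the sections one section at a time, since the family of sections need not admit a section-by-section disjoint refinement), whereas the appeal to Theorem~\ref{wk-part-gen} avoids this. The one point meriting a moment's care in the argument above is the observation that realizing a $2$-type forces the two $y$-coordinates to be distinct.
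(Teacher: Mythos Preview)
Your proof is correct and follows exactly the paper's approach: invoke Theorem~\ref{wk-part-gen} with $n=2$ to obtain $H\in\scr G$ all of whose $2$-element subsets realize $2$-types, and then note that no $2$-type can have $y_1=y_2$ since equality signs occur only between $x_i$'s. Your side remark on the alternative density argument is also accurate, though not needed for the corollary itself.
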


\begin{pf}
  By Theorem~\ref{wk-part-gen}, let $H\in\scr G$ be a set all of whose
  2-element subsets realize types.  Then $\pi_2$ is one-to-one on $H$
  because no 2-type can have $y_1=y_2$.
\end{pf}

\begin{cor}
  If $f:\omega^2\to\omega$, then there is a set $X\in\scr G$ such that
  $f\restr X$ is one of the following.
  \begin{ls}
    \item a constant function
\item $\pi_1$ followed by a one-to-one function
\item a one-to-one function
  \end{ls}
\end{cor}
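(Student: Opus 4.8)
The plan is to deduce this corollary from the partition property of Theorem~\ref{wk-part-gen} applied with $n=2$, in the same spirit as the proof of the preceding corollary. First I would partition $[\omega^2]^2$ into the two pieces $R=\{e:f\text{ is constant on }e\}$ and its complement, and invoke Theorem~\ref{wk-part-gen} to obtain a set $H\in\scr G$, which I may take to be standard, such that every $2$-element subset of $H$ realizes one of the four $2$-types and, for each $2$-type $\tau$, either $f$ is constant on every pair of type $\tau$ inside $H$, or $f$ takes two distinct values on every such pair. The only $2$-type with $x_1=x_2$ is $x_1=x_2<y_1<y_2$, and a pair of $H$ realizes it exactly when its two points lie in a common column $\{c\}\times\omega$; the remaining three $2$-types are realized precisely by the pairs of $H$ whose two points have distinct first coordinates. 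Thus homogeneity for the type $x_1=x_2<y_1<y_2$ says that $f$ is \emph{either constant on each column of $H$ or injective on each column of $H$}, and the whole proof splits into these two cases.

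In the first case, $f$ factors through $\pi_1$ on $H$: there is $g:\pi_1(H)\to\omega$ with $f(\langle c,q\rangle)=g(c)$ for every $\langle c,q\rangle\in H$. By Proposition~\ref{p1-gen} the ultrafilter $\scr U=\pi_1(\scr G)$ is selective and $\pi_1(H)\in\scr U$, so (extending $g$ arbitrarily to $\omega$ and intersecting with $\pi_1(H)$) there is $C\in\scr U$, $C\subseteq\pi_1(H)$, on which $g$ is one-to-one or constant. Since $C\in\pi_1(\scr G)$ we have $\pi_1^{-1}(C)\in\scr G$, hence $X:=H\cap\pi_1^{-1}(C)\in\scr G$; on $X$ we have $f=g\circ\pi_1$ with $g$ restricted to $\pi_1(X)\subseteq C$, which gives the first alternative (if $g$ is constant) or the second (if $g$ is one-to-one).

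In the second case, $f$ is injective on each column of $H$, and I claim $f$ is in fact injective on all of $H$, which yields the third alternative with $X=H$. Suppose not, and choose distinct $u,v\in H$ with $f(u)=f(v)$, say with $u$ having the smaller second coordinate. The pair $\{u,v\}$ realizes some $2$-type, and it cannot be $x_1=x_2<y_1<y_2$, since on such pairs $f$ takes distinct values; hence it realizes one of the three $2$-types with $x_1\neq x_2$, and on every pair of $H$ of that same type $f$ collides. The column of $v$ is infinite because $H$ is standard, so I can pick $v'$ in that column, still in $H$, with second coordinate larger than those of both $u$ and $v$. The key observation is that in each of the three $2$-types with $x_1\neq x_2$ the variable $y_2$ is the maximum of $x_1,x_2,y_1,y_2$, so raising the second coordinate of the larger point (that with the larger second coordinate) of a pair of such a type leaves its type unchanged; consequently $\{u,v'\}$ realizes the same $2$-type as $\{u,v\}$, whence $f(u)=f(v')$. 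On the other hand $\{v,v'\}$ lies in a single column of $H$, so $f(v)\neq f(v')$. This contradicts $f(u)=f(v)=f(v')$ and proves the claim.

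The step I expect to require the most care is this second case: one has to notice that a hypothetical cross-column collision of $f$ inside $H$ can be ``pushed up'' within one column without escaping the fixed $2$-type, which is exactly what converts column-injectivity into global injectivity. Everything else is bookkeeping with the four $2$-types together with the elementary facts that $\pi_1(\scr G)=\scr U$ is selective and that $\scr G$ is an ultrafilter extending $\scr F^{\otimes2}$ (so one may shrink $H$ to a standard condition while staying in $\scr G$).
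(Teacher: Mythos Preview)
Your argument is correct and follows the paper's approach almost exactly: the same two-piece partition of $[\omega^2]^2$, the same invocation of Theorem~\ref{wk-part-gen}, the same split according to whether the type $x_1=x_2<y_1<y_2$ lands in the ``equal'' or ``unequal'' piece, and the same use of selectivity of $\scr U=\pi_1(\scr G)$ in the first case.

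The one point of departure is in the second case. The paper obtains the contradiction by passing to a $3$-type: to each of the three $2$-types with $x_1\neq x_2$ it associates a $3$-type with $x_2=x_3$, appeals to the fact (inherited from Proposition~\ref{galvin}) that every $n$-type is realized in $H$, and then reads off from a realizing triple both a cross-column collision and a same-column collision. You instead construct the third point by hand, choosing $v'$ above $v$ in the same column of the standard set $H$ and observing that, since $y_2$ is maximal in each of the three relevant $2$-types, the pair $\{u,v'\}$ keeps the same $2$-type as $\{u,v\}$. This is a mild but genuine simplification: it stays entirely at the level of $2$-types and avoids invoking the ``all types are realized'' fact, at the cost of the extra step of shrinking $H$ to a standard condition (which is harmless since $\scr G\supseteq\scr F^{\otimes2}$).
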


\begin{pf}
  Partition the set $[\omega^2]^2$ into two pieces by putting
  $\{\sq{p_1,q_1},\sq{p_2,q_2}\}$ into the first piece if
  $f(p_1,q_1)=f(p_2,q_2)$ and into the second piece if $f(p_1,q_1)\neq
  f(p_2,q_2)$.  Let $H\in\scr G$ be as in Theorem~\ref{wk-part-gen}
  for $n=2$ and this partition.  

  Consider first those 2-element subsets of $H$ that realize the type
  $x_1=x_2<y_1<y_2$, i.e., 2-element subsets of columns.  If these all
  lie in the first piece of our partition, then the restriction
  $f\restr H$ is constant in each column, so it is $g\circ\pi_1$ for
  some $g:\omega\to\omega$.  Furthermore, as $\scr U=\pi_1(\scr G)$ is
  selective, by Proposition~\ref{p1-gen}, $g$ is constant or
  one-to-one on a set $A\in\scr U$.  Then the restriction of $f$ to
  $H\cap{\pi_1}^{-1}(A)$ satisfies the first or second conclusion of
  the corollary.

So we may assume from now on that the 2-element subsets of $H$
realizing $x_1=x_2<y_1<y_2$ are in the second piece of our partition.
That is, $f\restr H$ is one-to-one in each column.  We shall complete
the proof by showing that $f$ is one-to-one on all of $H$.  Suppose,
toward a contradiction, that $f$ took the same value at two elements
of $H$, necessarily in different columns.  Then the set of those two
elements realizes one of the three 2-types
\begin{align*}
  x_2&<x_1<y_1<y_2\\
x_1&<x_2<y_1<y_2\\
x_1&<y_1<x_2<y_2,
\end{align*}
because these are the only 2-types that don't have $x_1=x_2$.  By the
homogeneity of $H$, all 2-element sets $\{\sq{p_1,q_1},\sq{p_2,q_2}\}$
realizing that 2-type have $f(p_1,q_1)=f(p_2,q_2)$.  We can associate
to each of the three relevant 2-types a 3-type in which $x_2=x_3$,
namely
\begin{align*}
  x_2&=x_3<x_1<y_1<y_2<y_3\\
x_1&<x_2=x_3<y_1<y_2<y_3\\
x_1&<y_1<x_2=x_3<y_2<y_3,
\end{align*}
respectively.  This 3-type (like all types) is realized in $H$, say by
$\{\sq{p_1,q_1},\sq{p_2,q_2},\sq{p_3,q_3}\}$.  Then both of the pairs
$\{\sq{p_1,q_1},\sq{p_2,q_2}\}$ and $\{\sq{p_1,q_1},\sq{p_3,q_3}\}$
realize the 2-type that guarantees $f(p_1,q_1)=f(p_2,q_2)$ and
$f(p_1,q_1)=f(p_3,q_3)$.  Therefore $f(p_2,q_2)=f(p_3,q_3)$.  But
$p_2=p_3$, so this contradicts the fact that $f$ is one-to-one on
columns in $H$.
\end{pf}

\begin{cor}
  The generic ultrafilter \scr G is a Q-point.  The only non-principal
  ultrafilters strictly below it in the Rudin-Keisler order are the
  isomorphic copies of $\scr U=\pi_1(\scr G)$.
\end{cor}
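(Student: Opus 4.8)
The plan is to read off both assertions from the canonization of functions $f:\omega^2\to\omega$ in the corollary just above, exactly as Corollaries~\ref{sum-Q} and~\ref{sum-RK} were read off from the analogous Proposition about the sums $\scr U\sm_n\scr V_n$. The driving observation is that, modulo \scr G, an arbitrary $f:\omega^2\to\omega$ has one of only three shapes --- constant, of the form $g\circ\pi_1$ with $g$ one-to-one on the relevant domain, or one-to-one --- while the first two shapes are severely restricted by Corollary~\ref{cor:notp}, which says $\pi_1$ is neither finite-to-one nor constant on any set in \scr G.

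For the Q-point assertion I would fix a finite-to-one $f:\omega^2\to\omega$ and apply the canonization corollary to obtain $X\in\scr G$ on which $f$ has one of the three shapes. The constant shape is impossible because a constant function on the infinite set $X$ is not finite-to-one. The shape $g\circ\pi_1$ with $g$ injective is impossible too, since then $\pi_1$ itself would be finite-to-one on $X\in\scr G$, contradicting Corollary~\ref{cor:notp}. Hence $f$ is one-to-one on some $X\in\scr G$, which is precisely the Q-point property.

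For the Rudin--Keisler assertion I would let $\VV$ be a non-principal ultrafilter with $\VV\leq_{RK}\scr G$; realizing $\VV$ on $\omega$ without loss of generality, I fix $f:\omega^2\to\omega$ with $\VV=f(\scr G)$ and pass to a set $X\in\scr G$ on which $f$ has one of the three shapes. If $f\restr X$ is constant then $\VV$ is principal, a contradiction. If $f\restr X$ is one-to-one then $f$ carries \scr G isomorphically onto $\VV$, so $\VV\equiv_{RK}\scr G$ and $\VV$ fails to be \emph{strictly} below \scr G. Otherwise $f\restr X=g\circ\pi_1$ with $g$ one-to-one on $\pi_1(X)$; since $X\subseteq{\pi_1}^{-1}(\pi_1(X))$ and $X\in\scr G$ we get $\pi_1(X)\in\scr U$, so $g$ restricted to $\pi_1(X)$ is an isomorphism and $\VV=f(\scr G)=g(\pi_1(\scr G))=g(\scr U)\equiv_{RK}\scr U$. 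Thus every non-principal ultrafilter strictly RK below \scr G is an isomorphic copy of \scr U; conversely \scr U really is strictly below \scr G, since $\pi_1$ gives $\scr U\leq_{RK}\scr G$ while $\scr U\not\equiv_{RK}\scr G$ --- being a P-point is an isomorphism invariant, \scr U is selective by Proposition~\ref{p1-gen}, and \scr G is not a P-point by Corollary~\ref{cor:notp}.

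I do not anticipate a genuine obstacle here: every ingredient is already in place. The only points needing a little care are keeping straight which sets lie in \scr G and which in \scr U when transporting along $\pi_1$, and noticing that it is precisely the word ``strictly'' that disposes of the one-to-one case in the Rudin--Keisler argument.
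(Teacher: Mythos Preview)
Your proposal is correct and follows exactly the approach the paper indicates: the paper's own proof consists of a single sentence referring back to the proofs of Corollaries~\ref{sum-Q} and~\ref{sum-RK}, and what you have written is precisely a spelled-out version of those arguments transported to the generic setting via the preceding canonization corollary and Corollary~\ref{cor:notp}. Your added remark that \scr U is \emph{strictly} below \scr G (using that \scr U is selective while \scr G is not a P-point) is a welcome clarification that the paper leaves implicit.
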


\begin{pf}
  This follows from the preceding corollary by the same proofs as for
  Corollaries~\ref{sum-Q} and \ref{sum-RK} above.
\end{pf}

\subsection{Weak P-point}

The results proved so far about the \bbb P-generic ultrafilter \scr G
mirror the properties of selective-indexed sums of selective
ultrafilters proved in Section~\ref{sum}.  Nevertheless, there is an
important difference between \scr G and these sums; in particular,
\scr G is not such a sum.  

Notice that any sum $\scr U\sm_n\scr V_n$ is, in the Stone-\v Cech
compactification $\beta(\omega^2)$, the limit with respect to \scr U
of copies in columns of $\omega^2$ of the ultrafilters $\scr V_n$,
namely the images $i_n(\scr V_n)$ where
$i_n:\omega\to\omega^2:y\mapsto\sq{n,y}$.  In contrast, \scr G is not
such a limit; indeed, we shall show in this subsection that it is not
a limit point of any countable set of other, non-principal
ultrafilters.

We begin by recalling some standard terminology and results.  

\begin{df}
  A non-principal ultrafilter \scr W on a countable set $S$ is a
  \emph{weak P-point} if, for any countably many non-principal
  ultrafilters $\scr X_n\neq\scr W$ on $S$, there is a set $A\in\scr
  W$ such that $A\notin\scr X_n$ for all $n$.
\end{df}

In topological terms, this means that \scr W is not in the closure in
$\beta(S)$ of a countable set of other non-principal ultrafilters.

The terminology ``weak P-point'' is justified by the observation that
any P-point \scr W is also a weak P-point.  Indeed, given countably
many $\scr X_n$ as in the definition, we have for each $n$, since
$\scr W\neq\scr X_n$, some $A_n\in\scr W-\scr X_n$.  As \scr W is a
P-point, it contains a set $A$ almost included in each $A_n$, and this
$A$ is clearly not in any of the $\scr X_n$'s.
Unlike P-points, weak P-points can be proved to exist in $\ZFC$; see
\cite{kunen}.

\begin{thm}     \label{wpp}
  The generic ultrafilter \scr G is a weak P-point.
\end{thm}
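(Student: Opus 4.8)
The plan is to work with a name $\dot{\scr X}_n$ for each of the countably many hypothetical non-principal ultrafilters $\scr X_n \neq \scr G$, together with a name for a witness that $\scr X_n \neq \scr G$, and then to build, by a countably-closed fusion argument inside $\bbb P$, a condition $H$ forcing $H \notin \dot{\scr X}_n$ for every $n$. Since $\bbb P$ adds no reals (Lemma~\ref{la:noreals}), each $\scr X_n$ is already an ultrafilter on $\omega^2$ in the ground model, but it need not be in $V$ as a set; still, its membership relation is decided condition-by-condition, which is what we will exploit. The obvious difficulty is that we cannot just "diagonalize below each $\scr X_n$" the way one does for a P-point, because $\scr G$ is not a P-point; instead we will use the weak-Ramsey structure from Theorem~\ref{wk-part-gen} and the fact that $\scr U = \pi_1(\scr G)$ is a selective ultrafilter on $\omega$ (Proposition~\ref{p1-gen}).

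First I would separate the $\scr X_n$ into cases according to how they interact with $\pi_1$. If $\pi_1(\scr X_n) \neq \scr U$, pick a set $a_n \in \scr U$ with $\pi_1^{-1}(\omega \setminus a_n) \in \scr X_n$; since $a_n \in \scr U = \pi_1(\scr G)$, the cylinder $\pi_1^{-1}(a_n)$ is (modulo $\scr F^{\otimes2}$) in $\scr G$ and not in $\scr X_n$, so these $\scr X_n$ are harmless and can be handled by a single shrinking using selectivity of $\scr U$ (diagonalize the countably many $a_n$ to one set in $\scr U$). The remaining, genuinely dangerous, case is $\pi_1(\scr X_n) = \scr U$: these $\scr X_n$ "live over" the same selective ultrafilter on the first coordinate. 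For such an $\scr X_n$ one should think of it, via the canonical disintegration over $\scr U$, as being determined by the ground-model data "for $\scr U$-almost every $k$, the vertical section behaves like some ultrafilter $\scr Y^n_k$ on $\omega$" — but the point of the argument is precisely that $\scr G$ is \emph{not} of this form, so for each such $n$ there will be a set in $\scr G$ escaping $\scr X_n$.

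The engine of the argument is a density statement: for each name $\dot{\scr X}$ for a non-principal ultrafilter on $\omega^2$ with $\Vdash \dot{\scr X} \neq \dot{\scr G}$, the set of conditions $A \in \bbb P$ that force $A \notin \dot{\scr X}$ is dense. Given $A_0 \in \bbb P$, use $\Vdash \dot{\scr X} \neq \dot{\scr G}$ to find $A_1 \leq A_0$ and a set $C \subseteq \omega^2$ with $A_1 \Vdash C \in \dot{\scr G} \setminus \dot{\scr X}$; by Lemma~\ref{force-in-G} this means $A_1 \subseteq C$ modulo $\scr F^{\otimes2}$, so shrinking $A_1$ a little we may take $A_1 \subseteq C$ outright, and then $A_1 \subseteq C$ forces (since $\dot{\scr X}$ is a name for an ultrafilter and $\omega^2 \setminus C \in \dot{\scr X}$) that $A_1 \notin \dot{\scr X}$. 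Thus each single $\dot{\scr X}_n$ is met by a dense set $D_n$. The main obstacle — and this is where countable closedness and the weak-Ramsey property do real work — is that genericity only gives us that $\scr G$ meets each $D_n$, i.e.\ \emph{some} set in $\scr G$ escapes $\scr X_n$; to get a \emph{single} $A \in \scr G$ escaping \emph{all} $\scr X_n$ we must fuse. So I would instead argue in $V$: build a descending sequence $A_0 \geq A_1 \geq A_2 \geq \cdots$ of standard conditions where $A_{n+1}$ is chosen (inside $D_n$, as above) to force $A_{n+1} \notin \dot{\scr X}_n$, and then take a lower bound $A$ for the sequence using countable closedness of the separative quotient (as in the proof of Lemma~\ref{la:noreals}): concretely $A = \{\langle x_k, y\rangle : k \in \omega,\ y \in A_k(x_k)\}$ for suitably chosen $x_0 < x_1 < \cdots$. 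Then $A \subseteq A_{n+1}$ modulo $\scr F^{\otimes2}$, hence $A \Vdash A_{n+1} \in \dot{\scr G}$, while $A_{n+1} \Vdash A_{n+1} \notin \dot{\scr X}_n$; since $A \leq A_{n+1}$ this gives $A \Vdash A \notin \dot{\scr X}_n$ as well (as $\dot{\scr X}_n$ is upward closed and $A \subseteq A_{n+1}$). Therefore $\{A \in \bbb P : \forall n\ A \Vdash A \notin \dot{\scr X}_n\}$ is dense, so the generic $\scr G$ contains such an $A$, which witnesses that $\scr G$ is a weak P-point.

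The step I expect to be the genuine obstacle is making the very first reduction rigorous: interpreting arbitrary names $\dot{\scr X}_n$ for non-principal ultrafilters, distinct from $\dot{\scr G}$, as \emph{ground-model} objects about which the forcing can decide membership, and in particular verifying that "$\Vdash \dot{\scr X}_n \neq \dot{\scr G}$ and $\dot{\scr X}_n$ non-principal" really does, below every condition, produce a ground-model set $C$ and a condition forcing $C \in \dot{\scr G} \setminus \dot{\scr X}_n$ — one has to be a little careful that $\dot{\scr X}_n$ might not be decided into a ground-model ultrafilter, only that each of its potential members is decided, and that this is enough for the density argument. Everything after that reduction is the standard countably-closed fusion of Lemma~\ref{la:noreals}, with the $\scr X_n$'s slotted in one per step.
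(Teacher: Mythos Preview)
Your density-plus-fusion argument for the case $\pi_1(\scr X_n)=\scr U$ is essentially correct and close to the paper's treatment of what it calls ``sort~(4)'', modulo one sloppiness: at the end you write ``$A\subseteq A_{n+1}$'', but the diagonal lower bound satisfies this only modulo $\scr F^{\otimes2}$ --- it differs from $A_{n+1}$ on finitely many columns. The patch is that $\pi_1(\scr X_n)=\scr U$ is non-principal, so any set supported on finitely many columns lies outside $\scr X_n$; hence $A_{n+1}\notin\scr X_n$ together with $A\setminus A_{n+1}\notin\scr X_n$ yields $A\notin\scr X_n$. With this fix your argument in fact handles both of the paper's sorts~(3) and~(4) at once, which is slightly slicker than the paper's separate treatments.

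The real gap is in your other case, $\pi_1(\scr X_n)\neq\scr U$. You propose to pick $a_n\in\scr U$ with $\pi_1^{-1}(\omega\setminus a_n)\in\scr X_n$ and then ``diagonalize the countably many $a_n$ to one set in $\scr U$''. This works when $\pi_1(\scr X_n)$ is non-principal: a pseudo-intersection $a\subseteq^*a_n$ suffices, since the finite error is absorbed by non-principality of $\pi_1(\scr X_n)$. But it fails when $\pi_1(\scr X_n)$ is \emph{principal}, say generated by $\{k_n\}$: then $a_n=\omega\setminus\{k_n\}$ and you need $k_n\notin a$ outright. If $\{k_n:n\in\omega\}$ happens to lie in $\scr U$ --- nothing prevents this; for instance take one non-principal $\scr X_n$ living on each column $\{n\}\times\omega$ --- then no $a\in\scr U$ avoids all the $k_n$, and no cylinder $\pi_1^{-1}(a)$ escapes all these $\scr X_n$. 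Your fusion argument does not rescue this case either, since now $A\setminus A_{n+1}$, though supported on finitely many columns, may well lie in $\scr X_n$ (which concentrates on a single column). This ``sort~(1)'' is the genuinely hard part of the theorem; the paper isolates it and introduces a separate combinatorial device (Lemma~\ref{indep}: an independent family of two-piece partitions of each relevant column, so that whatever pieces the $\scr X_n$'s are forced to concentrate on, one can choose an infinite subset almost disjoint from all of them) and runs a careful $\omega$-stage fusion that revisits and adjusts already-built columns. Neither selectivity of $\scr U$ nor the weak-Ramsey property --- which you invoke but never actually use --- circumvents this.
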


\begin{pf}
  In accordance with the definition of ``weak P-point'', we shall need
  to consider non-principal ultrafilters $\scr X\neq\scr G$ on
  $\omega^2$.  It will be useful to distinguish four sorts of such
  ultrafilters \scr X:
  \begin{lsnum}
    \item ultrafilters \scr X such that $\pi_1(\scr X)$ is a principal
      ultrafilter,
\item ultrafilters \scr X such that $\pi_1(\scr X)$ is non-principal
  and distinct from $\scr U=\pi_1(\scr G)$,
\item ultrafilters \scr X such that $\pi_1(\scr X)=\scr U$ and, for
  some $f:\omega\to\omega$, the set $\{\sq{x,y}:y\leq f(x)\}$ is in
  \scr X,
\item ultrafilters $\scr X\neq\scr G$ such that $\pi_1(\scr X)=\scr U$
  and, for every $f:\omega\to\omega$, the set $\{\sq{x,y}:y>f(x)\}$ is
  in \scr X.
  \end{lsnum}
We shall show that, for any countably many ultrafilters $\scr X_n$ of
any one of these four sorts, \scr G contains a set $A$ that is in none
of these $\scr X_n$'s.  This will suffice, because then, if we are
given a countable set of $\scr X_n$'s of possibly different sorts, we
can partition it into four subsets, one for each sort, find suitable sets
$A$ for each of the four subsets, and then intersect those four $A$'s
to get a single $A$ that is in \scr G but in none of the original $\scr
X_n$'s.  In other words, it suffices to treat each sort of \scr X
separately.  This we now proceed to do, starting with the easier
cases.

Suppose we are given countably many ultrafilters $\scr X_n$ of
sort~(2).  The countably many ultrafilters $\pi_1(\scr X_n)$ on
$\omega$ are non-principal and distinct from \scr U, which is (by
Proposition~\ref{p1-gen}) selective, hence a P-point, and hence a weak
P-point.  So there is a set $B\in\scr U$ that is in none of the
ultrafilters $\pi_1(\scr X_n)$.  Then ${\pi_1}^{-1}(B)$ is in \scr G
but in none of the $\scr X_n$, as required.  This completes the proof
for sort~(2).

Suppose next that we are given countably many ultrafilters $\scr X_n$
of sort~(3).  For each $n$, fix $f_n:\omega\to\omega$ such that
$\{\sq{x,y}:y\leq f_n(x)\}\in\scr X_n$.  By diagonalization, let
$g:\omega\to\omega$ eventually majorize each of the countably many
functions $f_n$.  Thus, for each $n$, the set $\{\sq{x,y}:y\leq
f_n(x)\}$ is covered by the union of $\{\sq{x,y}:y\leq g(x)\}$ and
${\pi_1}^{-1}(F_n)$ for a finite set $F_n$.  So $\scr X_n$ must
contain $\{\sq{x,y}:y\leq g(x)\}$ or ${\pi_1}^{-1}(F_n)$.  It cannot
contain the latter, because $\pi_1(\scr X_n)$ is the non-principal
ultrafilter \scr U.  So each $\scr X_n$ must contain $\{\sq{x,y}:y\leq
g(x)\}$.  But the complement of this set is in $\scr F^{\otimes2}$ and
therefore in \scr G; it therefore serves as the required $A$.  This
completes the proof for sort~(3).

Suppose next that we are given countably many ultrafilters $\scr X_n$
of sort~(4).  In contrast to the previous cases, we shall now need to
use the fact that our ultrafilters are in the forcing extension
$V[\scr G]$.  Fix names $\dot{\scr X}_n$ for the ultrafilters $\scr
X_n$, and recall that we already fixed the canonical name $\dot{\scr
  G}$ for \scr G.  We shall complete the proof for this case by
showing that any condition $A\in\bbb P$ that forces ``The $\dot{\scr
  X}_n$ are ultrafilters of sort~(4)'' can be extended to one forcing
``$\dot{\scr G}$ contains a set that is in none of the $\dot{\scr
  X}_n$.''

Let such a condition $A$ be given.  It forces that, for each $n$,
the difference $\dot{\scr G}-\dot{\scr X}_n$ is nonempty.  Since the
forcing (in its separative form) is countably closed, we can extend
$A$ to a condition $B$ that forces, for each $n\in\omega$, a specific
set $C_n$ in the ground model to be in $\dot{\scr G}-\dot{\scr
  X_n}$. By Lemma~\ref{force-in-G}, we have that $B\subseteq C_n$
modulo $\scr F^{\otimes2}$ for each $n$.   But $B$, as an extension of
$A$, forces $\dot{\scr X}_n$ to be of sort~(4) and therefore to be a
superset of the filter  $\scr F^{\otimes2}$.  So from $B\forces
C_n\notin\dot{\scr X}_n$ it follows that $B\forces B\notin\dot{\scr
  X}_n$.  Of course $B$ forces itself to be in the generic \scr G.
Summarizing, we have an extension $B$ of $A$ forcing some set, namely
$B$ itself, to be in \scr G but in none of the $\scr X_n$.   This
completes the proof for sort~(4).

It remains to treat the case where we are given ultrafilters $\scr
X_n$ of sort~(1).  So each $\pi_1(\scr X_n)$ is a principal
ultrafilter, say generated by $\{f(n)\}$.  Note that, although the
sequence \sq{\scr X_n} and the individual ultrafilters $\scr X_n$ need
not be in the ground model, $f$, being a real, is in the ground model.

We shall complete the proof by showing that any condition $A$ forcing
``The $\dot{\scr X}_n$ are ultrafilters of sort~(1)'' can be extended
to a condition $B$ forcing ``$\dot{\scr G}$ contains a set that is in none
of the $\dot{\scr X}_n$.''  We begin with a few simplifying steps,
shrinking $A$ to normalize it in certain ways.

By extending (i.e., shrinking) the given condition $A$, we can
arrange that it is standard; recall that this means that all its
nonempty sections are infinite.  We can also arrange that it forces the
$f$ introduced above to be a specific function in the ground model.
If the set $Z=\pi_1(A)-\ran(f)$ is infinite, then
$B=A\cap{\pi_1}^{-1}(Z)$ is a condition forcing that ${\pi_1}^{-1}(Z)$
is in \scr G but in none of the $\scr X_n$, so the proof is complete
in this case.  We therefore assume that $Z$ is finite.  Removing
${\pi_1}^{-1}(Z)$ from $A$, we can arrange that
$\pi_1(A)\subseteq\ran(f)$.   From now on, we assume that all these
arrangements have been made.  

The rest of the proof will consist of an $\omega$-sequence of
successive extensions of $A$, approaching the desired condition $B$.
Each step will involve a simple construction, which we isolate in the
following lemma.

\begin{la}      \label{indep}
Any countably infinite set $S$ admits a sequence \sq{\Pi_n:n\in\omega}
of partitions $\Pi_n$ of $S$ into two pieces each, such that, whenever
pieces $C_n\in\Pi_n$ are chosen for each $n$, there is an infinite set
$P\subseteq S$ almost disjoint from all the chosen $C_n$'s.
\end{la}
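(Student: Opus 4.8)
The plan is to reduce everything to a single concrete countable set on which the partitions can be written down by hand, namely the binary tree $2^{<\omega}$ of finite $0$--$1$ strings. First I would fix a bijection of $S$ with $2^{<\omega}$; since the statement is purely combinatorial and invariant under bijection, this lets me assume $S = 2^{<\omega}$. For each $n \in \omega$ I would then take $\Pi_n = \{A_n, B_n\}$ where
\[
A_n = \{ s \in 2^{<\omega} : |s| > n \text{ and } s(n) = 0 \}, \qquad B_n = 2^{<\omega} \setminus A_n .
\]
The point of defining $A_n$ through the $n$-th coordinate is that, along any branch of the tree, one of $A_n, B_n$ contains all but finitely many nodes of the branch while the other contains none of them, with which of the two happens being controlled by the branch's $n$-th bit.

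Next, given a selection of pieces $C_n \in \Pi_n$, I would define $x \in 2^\omega$ by declaring $x(n) = 1$ exactly when $C_n = A_n$, and I would take $P$ to be the branch $\{ x \restriction k : k \in \omega \}$. This $P$ is infinite since $|x \restriction k| = k$. To see $P$ is almost disjoint from each $C_n$, I would argue case by case on $n$: if $C_n = A_n$ then $x(n) = 1$, so no initial segment of $x$ lies in $A_n$ (those of length $> n$ have $n$-th bit $1$, those of length $\le n$ are too short), hence $P \cap C_n = \varnothing$; if $C_n = B_n$ then $x(n) = 0$, so every $x \restriction k$ with $k > n$ lies in $A_n$ and therefore not in $B_n$, giving $P \cap C_n \subseteq \{ x \restriction k : k \le n \}$, a finite set. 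Either way $P \cap C_n$ is finite, which is what is needed.

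I do not expect any real obstacle: the lemma is essentially the assertion that a countable set carries a ``maximally independent'' family of partitions, and the binary tree supplies one automatically, since distinct branches are almost disjoint and an arbitrary prescription of bits is realized by some branch. The only thing to be careful about is the bookkeeping that makes exactly one side of each $\Pi_n$ cofinite along a given branch and the other side disjoint from it; the coordinatewise definition of $A_n$ is chosen precisely to arrange this, and then the construction of $x$ from the chosen pieces is forced.
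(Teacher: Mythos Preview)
Your proof is correct and follows essentially the same approach as the paper: identify $S$ with $2^{<\omega}$, take $\Pi_n$ to split according to the $n$th coordinate, and witness the conclusion by the branch whose bits are dictated by the choices $C_n$. The paper phrases the last step more tersely (``the unchosen pieces have the finite intersection property, so there is an infinite $P$ almost included in all these complements''), but the intended witness is exactly the branch you construct explicitly.
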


\begin{pf}
  It suffices to choose the partitions to be sufficiently independent.
  For example, suppose, without loss of generality, that $S$ is the
  set of finite sequences of zeros and ones and let $\Pi_n$ partition
  the sequences according to their $n\th$ term (where sequences
  shorter than $n$ are considered to have $n\th$ term zero).  For any
  chosen pieces $C_n\in\Pi_n$, their complements (the unchosen pieces)
  have the finite intersection property, so there is an infinite $P$
  almost included in all these complements.
\end{pf}

We shall construct a sequence of standard conditions 
\[
A_0\geq A_1\geq\dots\geq A_k\geq A_{k+1}\geq\dots,
\]
starting with $A_0=A$, along with an increasing sequence
$x_0<x_1<\dots$ of natural numbers and a sequence of infinite subsets
$P_k$ of $\omega$, with the following properties.
\begin{lsnum}
  \item If $i<j$ then $\{x_i\}\times P_i\subseteq A_j$.
\item If $f(n)=x_i$ then
  $A_{i+1}\forces\{x_i\}\times(\omega-P_i)\in\dot{\scr X}_n$.
\end{lsnum}
Note that we require the $A_k$'s to be a decreasing sequence in \bbb
P, not just in the separative quotient, so they are genuine subsets of
each other, not just modulo $\scr F^{\otimes2}$.  In particular, we shall
have $\pi_1(A_k)\subseteq\ran(f)$ for all $k$.

We proceed by induction, assuming that at the beginning of stage $k$
we already have $A_i$ for $i\leq k$ and $x_i$ and $P_i$ for $i<k$,
satisfying all our requirements insofar as they involve only these
initial segments of our construction.  At the beginning of stage 0, we
have the situation already described: $A_0=A$.

In stage $k$, we must define $A_{k+1}$, $x_k$, and $P_k$ so as to
maintain our requirements.  First choose $x_k$ to be any natural number (say the first, for definiteness) larger than $x_{k-1}$ and in
$\pi_1(A_k)$.  (Ignore ``larger than $x_{k-1}$ if $k=0$.)  Of course,
such an $x_k$ exists because $A_k$ is a condition.  Let $S=A_k(x_k)$
be the corresponding section of $A_k$.  Note that $S$ is infinite
because $A_k$ is standard.  Apply Lemma~\ref{indep} to obtain sequence
of partitions $\Pi_n$ as there.

Consider those natural numbers $n$ such that $f(n)=x_k$.  For each of
these, $\scr X_n$ is forced (by $A$ and a fortiori by $A_k$) to
concentrate on $\{x_k\}\times\omega$ and therefore on a set of the form
$\{x_k\}\times Z$, where $Z$ is one of the two pieces of $\Pi_n$ or
$\omega-S$.  Furthermore, since our forcing adds no new reals, the
condition $A_k$ has an extension $B$ that decides these options for each
relevant $n$.  If $B$ forces $\scr X_n$ to concentrate on
$\{x_k\}\times Z$ where $Z\in\Pi_n$, let $C_n$ be that $Z$.  For all other
$n$'s (those for which $f(n)\neq x_k$ and those for which $B$ forces
$\scr X_n$ to concentrate on $\{x_k\}\times(\omega-S)$), choose
$C_n\in\Pi_n$ arbitrarily.  By Lemma~\ref{indep}, we can find an
infinite subset of $S$ almost disjoint from all the chosen $C_n$'s;
let $P_k$ be such a set.  Note that, for each $n$ with $f(n)=x_k$, the
condition $B$ forces $\scr X_n$ to concentrate on
$\{x_k\}\times(\omega-P_k)$.  

Obtain $A_{k+1}$ from $B$ by standardizing (i.e., removing all finite
sections) and adjoining all the sets $\{x_i\}\times P_i$ for all
$i\leq k$.  None of this affects $B$ in the separative quotient, since
the union of the finite columns removed and the finitely many columns
added is in the ideal dual to $\scr F^{\otimes 2}$.  Furthermore,
$A_{k+1}\subseteq A_k$.  Indeed, we had $B\subseteq A_k$, and
standardization only shrinks $B$.  As for the additional columns
$\{x_i\}\times P_i$, the ones for $i<k$ were subsets of $A_k$ by
induction hypothesis, and the one for $i=k$ is also included in $A_k$
because $P_k\subseteq S=A_k(x_k)$.

This completes the inductive construction of the sequences of $A_k$'s,
$x_k$'s, and $P_k$'s.  Finally, let 
\[
B=\{\sq{x_k,y}:k\in\omega\text{ and }y\in P_k\}.
\]
This $B$ is a condition, as there are infinitely many $x_k$'s and each
$P_k$ is infinite.  It is an extension of all the $A_k$'s because each
of its elements \sq{x_k,y} is in $A_j$ for all $j>k$ by our inductive
construction, and therefore also for $j\leq k$ because the $A_j$'s
form a decreasing sequence.  We claim that $B$ forces all the $\scr
X_n$ to concentrate on $\omega^2-B$.  Since it forces \scr G to
concentrate on $B$, this will complete the proof of the theorem.

So consider an arbitrary $n\in\omega$.  If $f(n)$ is not of the form
$x_k$, then $\scr X_n$ concentrates on $\{f(n)\}\times\omega$, which
is disjoint from $B$, as required for the claim.  So suppose that
$f(n)=x_k$.  Then $A_{k+1}$ was constructed to force $\scr X_n$ to
concentrate on $\{x_k\}\times(\omega-P_k)$, which is disjoint from
$B$.  As an extension of $A_{k+1}$,  $B$ forces the same, and the
proof is therefore complete.
\end{pf}

\section{$\mathcal{G}$ does not have maximal Tukey
  type} \label{sec:gen2}

We prove a canonization theorem showing that every monotone map with
domain $\mathcal{G}$ is almost continuous (represented by a finitary
function) on a cofinal subset of $\mathcal{G}$.  It follows that every
ultrafilter Tukey reducible to $\mathcal{G}$ has Tukey type of
cardinality continuum.  In particular, $\mathcal{G}$ is strictly below
the top Tukey degree, thus answering a question of Blass stated during
a talk at the Fields Institute, September 2012.  Moreover, we show
that $\mathcal{G}\not\ge_T [\om_1]^{<\om}$, answering a question of
Raghavan in \cite{RaghavanArxivOct2012}.

The results in this section were obtained by Dobrinen and completed on
the following dates.  Theorems \ref{thm.canon} and
\ref{thm.answerBlass} were completed on October 8, 2012.  Theorem
\ref{thm.notaboveom_1} was completed on October 31, 2012.  Proposition
\ref{prop.1<T2} was completed on November 1, 2012.

We begin by recalling some useful facts.  
For ultrafilters  $\mathcal{U},\mathcal{V}$,
a map $f:\mathcal{U}\ra\mathcal{V}$ is called {\em monotone} if whenever $u\contains u'$ are in $\mathcal{U}$, then $f(u)\contains f(u')$.
By [Fact 6, in \cite{dt}], whenever $\mathcal{U}\ge_T\mathcal{V}$,
 there is a monotone convergent map $f:\mathcal{U}\ra\mathcal{V}$ witnessing this reduction.
Recall the following  [Theorem 20, in \cite{dt}] canonizing Tukey reductions from P-points as continuous maps.
Here, $\mathcal{P}(\om)$ is endowed with 
 the Cantor topology on $2^{\om}$ by associating subsets of $\om$ with their characteristic functions.

\begin{thm}[Dobrinen/Todorcevic, \cite{dt}]\label{thm.DTcts}
Suppose $\mathcal{U}$ is a P-point on $\om$ and $\mathcal{V}$ is an arbitrary ultrafilter on a countable base set such that $\mathcal{U}\ge_T \mathcal{V}$.
For each monotone convergent map  $f:\mathcal{U}\ra\mathcal{V}$,
 there is an $\tilde{x}\in\mathcal{U}$
such that $f\re (\mathcal{U}\re\tilde{x})$ is continuous.
Moreover, there is 
  a continuous monotone map
$f^*:\mathcal{P}(\om)\ra\mathcal{P}(\om)$ 
such that $f^*\re(\mathcal{U}\re\tilde{x})=f\re(\mathcal{U}\re\tilde{x})$.
Hence, there is a continuous monotone convergent map $f^*\re\mathcal{U}$  from $\mathcal{U}$ into $\mathcal{V}$ which extends $f\re(\mathcal{U}\re\tilde{x})$.
\end{thm}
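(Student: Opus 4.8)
The plan is to isolate a continuity lemma, then an extension step, then a short verification. First I would identify the countable base set of $\scr V$ with $\om$, so that $f$ becomes a monotone convergent map $f:\scr U\ra\mathcal P(\om)$ all of whose values happen to lie in $\scr V$; this costs nothing and lets the ``moreover'' clause (where $f^{*}$ must be defined on all of $\mathcal P(\om)$) make sense.

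The heart of the argument is the \emph{continuity lemma}: there is $\tilde x\in\scr U$ with $f\re(\scr U\re\tilde x)$ continuous for the Cantor topology. I would prove this by a P-point fusion. The idea is to build a $\sse$-decreasing sequence $x_{0}\contains x_{1}\contains\cdots$ of members of $\scr U$ so that, at stage $k$, one ``freezes'' the part of $f$ relevant to coordinate $k$: shrinking $x_{k-1}$ to $x_{k}$, one arranges that for every $b\in\scr U$ with $b\sse x_{k}$ the truth value of ``$k\in f(b)$'' is already determined by a bounded initial segment of $b$, under the standing assumption that the rest of $b$ lies inside $x_{k}$. Since $\scr U$ is a P-point one then picks $\tilde x\in\scr U$ with $\tilde x\sse^{*}x_{k}$ for all $k$, and lets $n_{k}$ bound the finite set $\tilde x\setminus x_{k}$. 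For $a\in\scr U\re\tilde x$ the tail $a\setminus n_{k}$ is a member of $\scr U$ contained in $x_{k}$, so the stage-$k$ freezing shows that ``$k\in f(a)$'' is decided by $a\cap n_{k}$; as this holds for every $k$, $f\re(\scr U\re\tilde x)$ is continuous.

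The extension step I expect to be routine: continuity of $f\re(\scr U\re\tilde x)$ together with monotonicity lets one read off a monotone finitary interpretation $\hat f$, assigning a subset of $\om$ to each finite $s\sse\tilde x$, and then $f^{*}(a):=\bigcup\{\hat f(s):s\sse a\cap\tilde x,\ s\text{ finite}\}$ defines a continuous monotone $f^{*}:\mathcal P(\om)\ra\mathcal P(\om)$ with $f^{*}\re(\scr U\re\tilde x)=f\re(\scr U\re\tilde x)$. For the final ``hence'' clause: since $\tilde x\in\scr U$, the family $\scr U\re\tilde x$ is cofinal in $\scr U$; for $a\in\scr U$ one has $a\cap\tilde x\in\scr U\re\tilde x$, so $f^{*}(a)\contains f^{*}(a\cap\tilde x)=f(a\cap\tilde x)\in\scr V$, whence $f^{*}$ maps $\scr U$ into $\scr V$; monotonicity and continuity are clear; and convergence of $f^{*}\re\scr U$ follows because for any cofinal $C\sse\scr U$ the family $\{a\cap\tilde x:a\in C\}$ is still cofinal in $\scr U$, its $f$-image is cofinal in $\scr V$ (as $f$ is convergent), and $f^{*}(a)\contains f(a\cap\tilde x)$.

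The main obstacle is entirely inside the continuity lemma: I have to make the stage-$k$ ``freezing'' simultaneously \emph{achievable} by shrinking to a single member of $\scr U$ and \emph{robust} enough to survive the later shrinkings and --- crucially --- the passage to the pseudo-intersection, which supplies only finite modifications rather than genuine subsets. The two demands have to be reconciled by matching the bound $n_{k}$ on $\tilde x\setminus x_{k}$ with the ``bounded initial segment'' used in the freezing. Monotonicity by itself will not do this: the natural upper approximations $f\big((a\cap n)\cup(\tilde x\setminus n)\big)$ form a $\sse$-decreasing sequence whose intersection contains $f(a)$ but need not equal it, and forcing it to stabilise at $f(a)$ at a finite stage is where one must use that $f$ takes values in the ultrafilter $\scr V$ (the convergence hypothesis), not merely that $f$ is order-preserving.
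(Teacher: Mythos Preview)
The paper does not give its own proof of this theorem: it is quoted verbatim as Theorem~20 of \cite{dt} and used as a black box. So there is no paper-proof to compare against; what there \emph{is} in the paper is the proof of the analogous Theorem~\ref{thm.canon} for the generic ultrafilter $\mathcal G$, and that proof is the right template to compare your sketch with.

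Your overall architecture (P-point fusion to get continuity on $\mathcal U\re\tilde x$, then a finitary extension $f^{*}$, then the routine verification of the ``hence'' clause) is exactly the standard one and matches what Dobrinen--Todorcevic do. The extension step and the final paragraph are fine.

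The gap is in your last paragraph. You have correctly located the difficulty --- the freezing must survive the passage from the $x_k$'s to the pseudo-intersection $\tilde x$ --- but your proposed mechanism (force the upper approximations $f\big((a\cap n)\cup(\tilde x\setminus n)\big)$ to stabilise at $f(a)$, using that $f$ lands in an ultrafilter) is not the right one, and convergence does not rescue it: the sequence $\{(a\cap n)\cup(\tilde x\setminus n)\}_n$ is a single decreasing chain, not a cofinal family, so the convergence hypothesis says nothing about it, and there is no reason for $\bigcap_n f\big((a\cap n)\cup(\tilde x\setminus n)\big)$ to equal $f(a)$ rather than merely contain it. In fact convergence is irrelevant to the continuity lemma --- compare the paper's Theorem~\ref{thm.canon}, which proves the $\mathcal G$-analogue for \emph{arbitrary} monotone $f:\mathcal G\to\mathcal P(\omega)$, with no convergence hypothesis and no assumption that the range sits inside an ultrafilter.

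The correct freezing is a two-sided ``deciding'' step, exactly as in the proof of Theorem~\ref{thm.canon}: at stage $k$, for each finite $s\subseteq x_{k-1}$ below a fixed level and each $j$ below that level, ask whether there exist $v\in\mathcal U$ with $v\subseteq x_{k-1}$ and $v$ agreeing with $s$ on the initial segment such that $j\notin f(v)$. If yes, shrink $x_{k-1}$ to (essentially) that $v$; then by monotonicity $j\notin f(u)$ for every $u\subseteq v$ with the same initial segment. If no, then $j\in f(v)$ for \emph{all} such $v$, and this persists under any later shrinking automatically. Either way, membership of $j$ in $f(u)$ is decided by the initial segment of $u$, and monotonicity alone carries the decision through the pseudo-intersection. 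This is the content of items (c), $(*)$ and Claim~\ref{claim.dagger} in the proof of Theorem~\ref{thm.canon}; the P-point case is the same with fewer indices.
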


The proof of Theorem \ref{thm.DTcts} holds whenever $\mathcal{V}$ is an ultrafilter on any countable base set $B$, the topology given by enumerating $B$ in order type $\om$ and considering the Cantor topology on $2^B$.  
In the next proposition, we shall apply this theorem with $\mathcal{G}$ in place of $\mathcal{V}$.

Recall Proposition \ref{p1-gen}
which implies
that  $\pi_1(\mathcal{G})$ is selective (hence a P-point) and that $\pi_1(\mathcal{G})<_{RK}\mathcal{G}$.
It follows that $\pi_1(\mathcal{G})\le_T\mathcal{G}$.
We begin this section by showing the stronger fact  that the inequality is  also strict for the Tukey reduction.

\begin{prop}\label{prop.1<T2}
 $\mathcal{G}>_T \pi_1(\mathcal{G})$.
\end{prop}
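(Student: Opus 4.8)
The plan is to show $\mathcal{G}\not\leq_T\pi_1(\mathcal{G})$; combined with the already-noted fact that $\pi_1(\mathcal{G})\leq_T\mathcal{G}$ (since $\pi_1(\mathcal{G})<_{RK}\mathcal{G}$), this gives $\mathcal{G}>_T\pi_1(\mathcal{G})$. Write $\mathcal{U}=\pi_1(\mathcal{G})$, which by Proposition~\ref{p1-gen} is selective, hence a P-point. Suppose toward a contradiction that $\mathcal{G}\leq_T\mathcal{U}$. By [Fact~6, \cite{dt}] there is a monotone convergent map $g:\mathcal{U}\to\mathcal{G}$. Now apply Theorem~\ref{thm.DTcts} with $\mathcal{U}$ a P-point and $\mathcal{V}=\mathcal{G}$ (an ultrafilter on the countable base set $\omega^2$): we get $\tilde{x}\in\mathcal{U}$ and a continuous monotone convergent map $g^*:\mathcal{U}\to\mathcal{G}$ extending $g\restr(\mathcal{U}\restr\tilde{x})$, represented by a continuous monotone $\mathcal{P}(\omega)\to\mathcal{P}(\omega^2)$.

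The heart of the argument is that a continuous monotone map cannot be cofinal in $\mathcal{G}$, because the images are too ``thin'' to be positive with respect to $\mathcal{F}^{\otimes2}$ in the right way. The key point to extract is this: if $g^*:\mathcal{P}(\omega)\to\mathcal{P}(\omega^2)$ is continuous and monotone, then for each $u\in\mathcal{U}$ and each $n\in\omega$, whether a given point $\langle x,y\rangle$ lies in $g^*(u)$ is decided by a finite amount of information about $u$, say by $u\cap m$ for some $m=m(\langle x,y\rangle)$ (this is where continuity is used, via a monotone version of König's lemma / compactness). The strategy is then to build, using the genericity of $\mathcal{G}$ over $\mathbb{P}=(\mathcal{F}^{\otimes2})^+$, a dense set of conditions $A\in\mathbb{P}$ such that $A$ forces ``$g^*(u)\not\supseteq\dot{\mathcal{G}}$-element represented by $A$'' — more precisely, one shows that for each continuous monotone $g^*$ (there are only continuum many, but in the forcing argument we work with a name) the set of conditions $A$ such that, modulo $\mathcal{F}^{\otimes2}$, no $g^*(u)$ with $u\in\mathcal{U}$ contains $A$, is dense. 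The cleanest way to organize this: fix $u\in\mathcal{U}$; by continuity, $g^*(u)=\bigcup_{k}g^*(u\cap k)$ as an increasing union (using monotonicity and that $u$ is the increasing union of its finite truncations, together with continuity of $g^*$); since each $g^*(u\cap k)$ is the image of a finite set it is a ``simple'' set — here I would argue that the section function $n\mapsto g^*(u\cap k)(n)$ is essentially determined by $k$, so a diagonal/fusion argument across all the countably many finite truncations appearing produces an $A\in\mathbb{P}$ with $A$ almost disjoint from $g^*(u)$ for the relevant $u$'s, or more carefully, a single $A\in\mathcal{G}$ witnessing failure of cofinality.

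A cleaner route, which I would actually pursue, uses the Rudin-Keisler reduction instead. Since $\mathcal{U}=\pi_1(\mathcal{G})<_{RK}\mathcal{G}$ via $\pi_1$, and since $\mathcal{U}$ is selective hence minimal in the RK order, any ultrafilter strictly RK below $\mathcal{G}$ other than a copy of $\mathcal{U}$ does not exist — but more to the point, if $\mathcal{G}\leq_T\mathcal{U}$ then by Theorem~\ref{thm.DTcts} the Tukey reduction is given by a continuous monotone cofinal map $h:\mathcal{U}\to\mathcal{G}$, and I claim such an $h$ would allow one to ``code'' a cofinal family in $\mathcal{G}$ of size $\leq\mathfrak{c}$ indexed by $\mathcal{U}$ in a way that violates the fact that $\pi_1$ is not finite-to-one on any set in $\mathcal{G}$ (Corollary~\ref{cor:notp}). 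Concretely: given the continuous monotone cofinal $h$, for each $A\in\mathcal{G}$ pick $u_A\in\mathcal{U}$ with $A\supseteq h(u_A)$; now $\pi_1(h(u_A))\in\mathcal{U}$ and is, by continuity of $h$, ``controlled'' by $u_A$ — one shows $\pi_1\circ h:\mathcal{U}\to\mathcal{U}$ is a monotone map that must be cofinal in $\mathcal{U}$ (since $\pi_1$ is cofinal from $\mathcal{G}$ to $\mathcal{U}$), and being continuous and monotone and cofinal on a P-point, the analysis forces $h(u)$ to have, for cofinally many $u$, a bounded number of points over each column it meets, which combined with the forcing-genericity of $\mathcal{G}$ produces a set in $\mathcal{G}$ on which $\pi_1$ is finite-to-one, contradicting Corollary~\ref{cor:notp}.

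The main obstacle is making precise the step ``a continuous monotone map into $\mathcal{G}$ cannot be cofinal,'' i.e.\ controlling the columns of $g^*(u)$ from finite data about $u$ and then running the density/fusion argument inside $\mathbb{P}=(\mathcal{F}^{\otimes2})^+$ to diagonalize against all the sets $g^*(u)$ simultaneously. Getting the quantifiers right — that we need a single $A\in\mathcal{G}$ with $A\not\supseteq g^*(u)$ for \emph{every} $u\in\mathcal{U}$, not just for a fixed one — is the delicate part; the natural tool is a genericity argument showing the relevant conditions are dense in $\mathbb{P}$, exploiting that $\mathbb{P}$ is countably closed (Lemma~\ref{la:noreals}) so that one can diagonalize over the countably many finite truncations $u\cap k$ that matter, together with the weak-Ramsey / type-canonization machinery of Theorem~\ref{wk-part-gen} to handle the interaction between columns and the continuous coding. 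I expect the actual proof in the paper to use the subsequent canonization theorem (Theorem~\ref{thm.canon}) as the real engine, with Proposition~\ref{prop.1<T2} following as a corollary; absent that, the forcing-density argument sketched above is the way I would go.
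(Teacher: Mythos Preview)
Your setup is correct: reduce via Theorem~\ref{thm.DTcts} to a continuous monotone map $g^*:\mathcal{P}(\omega)\to\mathcal{P}(\omega^2)$, observe that by $\sigma$-closure of $\bbb P$ such a map lies in the ground model, and then argue by density that no ground-model monotone map $f:\mathcal{P}(\omega)\to\mathcal{P}(\omega^2)$ can have $f\restr\pi_1(\mathcal{G})$ cofinal in $\mathcal{G}$. This is exactly how the paper begins.

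The gap is in the density argument itself. None of your three proposed mechanisms works. The diagonalization against the images $g^*(u\cap k)$ is not usable as stated: each $g^*(s)$ for finite $s$ can be an arbitrary subset of $\omega^2$, so there is no reason these images are ``thin'' in any sense that lets you find a condition almost disjoint from all $g^*(u)$. The second route, claiming that continuity forces $h(u)$ to have boundedly many points per column and hence produces a set on which $\pi_1$ is finite-to-one, is simply false; the map $u\mapsto\pi_1^{-1}(u)$ is continuous and monotone with every column infinite. And the proof does not use Theorem~\ref{thm.canon} at all; Proposition~\ref{prop.1<T2} is established beforehand by a short, self-contained argument.

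The idea you are missing exploits only monotonicity together with the fact that the putative Tukey map factors through $\pi_1$. Given a condition $x$, assume (the other cases being easy) that every $y\le x$ has $f(\pi_1[y])\cap y\in\bbb P$ and that below every $y\le x$ there is $z\le y$ with $f(\pi_1[z])\subseteq y$. Now split $x$ vertically: choose $y,w\in\bbb P$ with $y,w\subseteq x$, $y\cap w=\emptyset$, and $\pi_1[y]=\pi_1[w]=\pi_1[x]$. Find $y'\le y$ with $f(\pi_1[y'])\subseteq y$; then find $w'\le w$ with $\pi_1[w']\subseteq\pi_1[y']$, and $w''\le w'$ with $f(\pi_1[w''])\subseteq w'\subseteq w$. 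Since $\pi_1[w'']\subseteq\pi_1[y']$, monotonicity gives $f(\pi_1[w''])\subseteq f(\pi_1[y'])\subseteq y$, so $f(\pi_1[w''])\subseteq y\cap w=\emptyset$. Hence it is dense to force that $f$ takes the value $\emptyset$ somewhere on $\pi_1(\dot{\mathcal G})$, so $f$ cannot map cofinally into $\dot{\mathcal G}$. The whole point is that $f$ sees only $\pi_1[y]$, but the generic ultrafilter distinguishes between disjoint conditions sharing the same projection.
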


\begin{proof}
We shall show that for
each monotone  map $f:\mathcal{P}(\om)\ra\mathcal{P}(\om^2)$ in $V$,
$f\re\pi_1(\mathcal{G})$ is not a convergent map from $\pi_1(\mathcal{G})$ into $\mathcal{G}$ in $V[\mathcal{G}]$.
This shows, in particular,
 there are no monotone continuous maps in $V$ witnessing a Tukey reduction from $\pi_1(\mathcal{G})$ into $\mathcal{G}$.
Since $\pi_1(\mathcal{G})$ is a P-point,
 by Theorem \ref{thm.DTcts},  every Tukey reduction from $\pi_1(\mathcal{G})$ to any other ultrafilter is witnessed by a  monotone continuous  map.
Since  $\bP$ is $\sigma$-closed, 
 every continuous map  $f:\mathcal{P}(\om)\ra\mathcal{P}(\om^2)$ in $V[\mathcal{G}]$ is actually in $V$.
Thus, $\pi_1(\mathcal{G})\not\ge_T\mathcal{G}$.

Let $x\in\bP$ be given and let $f:\mathcal{P}(\om)\ra\mathcal{P}(\om^2)$ be a monotone  map in $V$.
If there is a $y\in \bP$ such that $y\sse x$  and $f(\pi_1[y])\cap y\not\in \bP$,
then $y\forces f(\pi_1(\dot{\mathcal{G}}))\not\sse\dot{\mathcal{G}}$.
Otherwise,  for all $y\in\bP$ with $y\sse x$, $f(\pi_1[y])\cap y \in \bP$.
If there is a $y\in\bP$ such that $y\sse x$ and  for all $z\in\bP$ with $z\sse y$, $f(\pi_1[z])\not\sse y$,
then $y\forces$ ``$f\re\pi_1(\dot{\mathcal{G}}):\pi_1(\dot{\mathcal{G}})\ra\dot{\mathcal{G}}$ is not a convergent map."

If none of the above cases holds, then for each $y\in \bP$ with $y\sse x$, (a) $f(\pi_1[y])\cap y \in \bP$,  and (b) there is a $z\in\bP$ with $z\sse y$ such that $f(\pi_1[z])\sse y$
Fix some $y,w\in\bP$ with   $y,w\sse x$ such that 
$\pi_1[y]=\pi_1[w]=\pi_1[x]$ but
$y\cap w=\emptyse$.
Take a $y'\in\bP$ with $y'\sse y$ such that  $f(\pi_1[y'])\sse y$.
Next, take a $w'\in\bP$ with $w'\sse w$ such that $\pi_1[w']\sse \pi_1[y']$;  then take a $w''\in\bP$ with $w''\sse w'$ such that $f(\pi_1[w''])\sse w'$.
Then $\pi_1[w'']\sse \pi_1[y']$, so $f(\pi_1[w''])\sse f(\pi_1[y'])$, since $f$ is monotone.  Hence, $f(\pi_1[w''])\sse y$. 
At the same time,
 $f(\pi_1[w''])\sse w' \sse w$. 
Thus,  $f(\pi_1[w''])\sse  w\cap y=\emptyse$.
Hence, it is dense to force that $\dot{f}$ is constantly zero on some cofinal subset of $\pi_1(\dot{\mathcal{G}})$.

Therefore, for each monotone  map $f:\mathcal{P}(\om)\ra\mathcal{P}(\om^2)$ in $V$,
it is dense to force that ``$f\re\pi_1(\dot{\mathcal{G}})$ is not a convergent map from $\pi_1(\dot{\mathcal{G}})$ into $\dot{\mathcal{G}}$."
\end{proof}

\begin{notat}\label{notation.standard}
Throughout this section, we shall let $\bP_s$ denote the collection of  the standard forcing conditions; that is, those conditions such that each non-empty fiber is infinite.
We shall let $\mathcal{G}_s$ denote the members of $\mathcal{G}$ which are standard; that is, $\mathcal{G}_s=\mathcal{G}\cap\bP$.
For $u\in \bP_s$ and $n<\om$, we shall let $u\cap n^2$ denote $u\cap (n\times n)$.
Recall that for any $p\in\bP$ and $i\in\om$, $p(i)$ denotes $\{j\in\om:(i,j)\in p\}$, the $i$-th fiber of $p$.
\end{notat}

The next theorem provides a canonization for all monotone maps with domain $\mathcal{G}_s$  in terms of monotone finitary  maps on the   base $[\om^2]^{<\om}$.
The argument of Theorem \ref{thm.canon} combines some  of the key  traits of the proof of Theorem 20 in \cite{dt} for canonizing monotone convergent maps on P-points and the proof of Theorem 13
in \cite{Dobrinen11} canonizing monotone convergent maps on iterated Fubini products of P-points, making new adjustments for this setting.

\begin{defn}\label{defn.represents}
We say that a monotone map $f$ on $\mathcal{G}_s\re x$ is {\em represented by a  monotone finitary map $\vp$} if
there is a map $\vp:[\om^2]^{<\om}\ra [\om]^{<\om}$ such that 
for all $s,t\in [\om^2]^{<\om}$,
\begin{enumerate} 
\item
(Monotonicity)
$s\sse t\ra \vp(s)\sse\vp(t)$;
\item
($\vp$ represents $f$)
 For each
$u\in\mathcal{G}_s\re x$,
$f(u)=\bigcup_{n<\om}\vp(u\cap n^2)$.
\end{enumerate}
\end{defn}

\begin{thm}[Canonization of monotone maps as almost continuous]\label{thm.canon}
In $V[\mathcal{G}]$,
for each monotone function $f:\mathcal{G}\ra\mathcal{P}(\om)$,
there is an $x\in\mathcal{G}_s$ such that $f\re(\mathcal{G}_s\re x)$ is represented by a monotone finitary  map. 
Hence, every monotone cofinal map $f:\mathcal{G}\ra\mathcal{V}$ is represented by a monotone finitary map on the filter base consisting of members of $\mathcal{G}_s$ below some $x\in\mathcal{G}_s$, where $\mathcal{V}$ is any ultrafilter on a countable base.
\end{thm}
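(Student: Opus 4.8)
The plan is to recast the statement as a forcing assertion and then prove it by a fusion argument combining the continuity‑producing fusion for P‑points from the proof of Theorem~20 in \cite{dt} with the two‑dimensional finite‑approximation bookkeeping from the proof of Theorem~13 in \cite{Dobrinen11}, using throughout that $\bP$ is $\sigma$‑closed (Lemma~\ref{la:noreals}); as in the proof of Proposition~\ref{prop.1<T2} this guarantees that the finitary maps we produce already live in $V$. Fix a name $\dot f$ and a standard condition $p_0$ forcing that $\dot f\colon\dot{\mathcal{G}}\to\mathcal{P}(\om)$ is monotone. Since every $p\le p_0$ forces $\check p\in\dot{\mathcal{G}}$, the term $\dot f(\check u)$ is meaningful under any standard $u\le p_0$, and the monotonicity facts we will use are that $u\le w\le p_0$ implies $u\forces\dot f(\check u)\sse\dot f(\check w)$, and hence $u\forces k\in\dot f(\check u)$ entails $u\forces k\in\dot f(\check w)$ while $w\forces k\notin\dot f(\check w)$ entails $u\forces k\notin\dot f(\check u)$. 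Unwinding Definition~\ref{defn.represents}, the first assertion of the theorem reduces to the following: below any standard $p$ forcing $\dot f$ monotone there are a standard $q\le p$ and a monotone $\vp\colon[\om^2]^{<\om}\to[\om]^{<\om}$ in $V$ such that for every standard $u\le q$ and every $k<\om$,
\[
u\forces k\in\dot f(\check u)\quad\Longleftrightarrow\quad k\in\bigcup_{n<\om}\vp(u\cap n^2).
\]
Granting this, the set of such $q$ is dense below $p_0$, so a generic $\mathcal{G}$ containing $p_0$ meets it, and $x:=q$ works.

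The heart of the argument is building $q$ by a fusion in $\bP$. I would construct a $\sse$‑decreasing sequence of standard conditions $p_0\ge p_1\ge\cdots$, an increasing sequence $n_0<n_1<\cdots$ of ``active'' first coordinates, and infinite ``committed'' sections $P_i\sse p_{i+1}(n_i)$, so that columns committed at a stage are never touched afterwards and the diagonal set $q=\{\sq{n_i,y}:i<\om\text{ and }y\in P_i\}$ is a standard condition included modulo $\scr F^{\otimes2}$ in every $p_i$. At stage $j$ I would run through the finitely many \emph{challenges}: pairs $(s,k)$ with $k<j$ and $s$ a subset of a prescribed finite portion of the part of $q$ committed so far. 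For each challenge I would shrink only the not‑yet‑committed part of the current condition so as to \emph{decide $(s,k)$ homogeneously}, i.e.\ so that $v\forces k\in\dot f(\check v)$ has one and the same truth value for every standard $v$ lying below the shrunk condition that extends the committed part and satisfies $v\cap N(s)^2=s$. Since $\bP$ adds no reals, any such $v$ has an extension deciding $\dot f(\check v)$, and non‑membership statements $v\forces k\notin\dot f(\check v)$ pass to all standard conditions below $v$; hence either the future can be thinned so that every relevant $v$ rejects $k$, or --- by a density argument together with monotonicity --- every relevant $v$ already forces $k\in\dot f(\check v)$. This dichotomy drives the shrinking, and since there are only finitely many challenges per stage and a column may be thinned to any infinite subset, the committed $P_i$ stay infinite and $q$ is a genuine condition, exactly as in the Fr\'echet‑quotient fusions of Section~\ref{sec:gen1} and of \cite{Dobrinen11}. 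I expect this simultaneous bookkeeping --- keeping all decisions homogeneous, never spoiling an earlier decision, and preserving enough infinite columns for $q$ to be a condition --- to be the main obstacle, and the point where the two source arguments have to be genuinely merged rather than merely cited.

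With $q$ built, set $\vp(s)$ to be the set of $k$ for which the homogeneous decision made for $(s,k)$ was ``accept'' (and $\vp(s)=\emptyset$ if $s$ never occurs as a challenge). Then $\vp$ is monotone, because enlarging $s$ only constrains the admissible $v$ and so cannot convert ``accept'' into ``reject'', and each $\vp(s)$ is finite. The displayed equivalence is then routine: for $\Leftarrow$, if $k\in\vp(u\cap n^2)$ then $u$ itself is an admissible $v$ for $s=u\cap n^2$, so $u\forces k\in\dot f(\check u)$; for $\Rightarrow$, the homogeneity built into the fusion means that for each $k$ the truth value of $u\forces k\in\dot f(\check u)$ is already determined by $u\cap n^2$ once $n$ is large enough, so if it is ``true'' then $k\in\vp(u\cap n^2)$ for that $n$. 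This establishes the first assertion.

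For the ``hence'' clause, let $f\colon\mathcal{G}\to\mathcal{V}$ be monotone and cofinal with $\mathcal{V}$ an ultrafilter on a countable base $B$. Fixing a bijection of $B$ with $\om$, read $f$ as a monotone map $\mathcal{G}\to\mathcal{P}(\om)$ and apply the first assertion to obtain $x\in\mathcal{G}_s$ and a monotone finitary $\vp$ representing $f\re(\mathcal{G}_s\re x)$. Since $x\in\mathcal{G}$ and every member $a$ of $\mathcal{G}$ contains a standard subset of $x$ lying in $\mathcal{G}$ --- namely the result of deleting the finite columns of $a\cap x$, which alters $a\cap x$ only by a set in the ideal dual to $\scr F^{\otimes2}$ --- the family $\mathcal{G}_s\re x$ is a base for $\mathcal{G}$; transporting $\vp$ back along $B\cong\om$ gives the required representation of $f$ on that base.
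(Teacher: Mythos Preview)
Your overall plan---recast the statement as a density assertion below an arbitrary condition, run a fusion that homogeneously decides ``$k\in\dot f(\check v)$'' from finite data about $v$, then read $\vp$ off from those decisions---is exactly the paper's approach. The gap is in the bookkeeping: your challenges are pairs $(s,k)$ with $s=v\cap N^2$, but one must also record $t=\pi_1[v]\cap N$, the set of columns of $v$ with index below $N$ that are nonempty. These two pieces of data are genuinely independent, since a standard $u\le q$ can have an infinite column $u(n_i)$ lying entirely above $N$, so that $n_i\in\pi_1[u]\cap N$ while $n_i\notin\pi_1[u\cap N^2]$.

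The failure shows up in your ``$\Leftarrow$'' step. Suppose your fusion declares ``accept'' for $(s,k)$: every standard $v$ below the current condition, extending the committed part, with $v\cap N^2=s$, forces $k\in\dot f(\check v)$. You then claim that an arbitrary standard $u\le q$ with $u\cap N^2=s$ is itself such an admissible $v$; but $u$ need not extend the committed part---it may drop columns of $q$---and monotonicity goes the wrong way here (from $u\sse v$ one gets $\dot f(\check u)\sse\dot f(\check v)$, so $k\in\dot f(\check v)$ does not yield $k\in\dot f(\check u)$). For a toy illustration, imagine $f$ were such that $k\in f(w)$ iff $w(n_0)\neq\emptyset$: every $v$ containing the committed column $n_0$ accepts $k$, yet any $u$ dropping that column rejects it, and your $\vp$ would wrongly put $k$ into $\vp(u\cap N^2)$. (If instead you intend ``admissible $v$'' to mean merely $v\le$ current condition with $v\cap N^2=s$, the dichotomy step itself breaks: the witness $v_0$ for ``reject'' may have dropped a committed column, and since you only shrink the not-yet-committed part you cannot arrange later $u$'s with that column to lie inside $v_0$.)

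The paper fixes this by making the challenges \emph{triples} $(s,t,j)$ with $\pi_1[s]\sse t\sse\{i_0,\dots,i_k\}$ and homogenizing over standard $v$ satisfying both $v\cap(i_k+1)^2=s$ and $\pi_1[v]\cap(i_k+1)=t$; in the ``reject'' case the shrinking touches only the columns in $t$ and the columns above $i_k$, leaving the other low columns intact for use by other triples. This produces a two-variable map $\psi(s,t)$ that provably represents $\dot f$ on the diagonal $x$. Only afterward is $\psi$ collapsed to the one-variable $\vp(s)=\psi(s,\pi_1[s])$, and that collapse needs its own short argument: given $u$ and $j$ with $u\forces j\in\dot f(u)$, one passes from the first relevant stage $n$ to a later stage $m$ at which $\pi_1[u\cap(i_{k_m}+1)^2]\supseteq\pi_1[u]\cap(i_{k_n}+1)$, so that the missing column information is eventually absorbed into a larger finite $s$. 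Your outline jumps straight to the one-variable $\vp$, and thereby omits both the $t$-indexed fusion and this final collapse; those are exactly the places where the two source arguments have to be ``genuinely merged,'' as you yourself anticipated.
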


\begin{proof}
Let $\dot{f}$ be a $\bP$-name 
such that $\forces$ ``$\dot{f}:\dot{\mathcal{G}}\ra\mathcal{P}(\om)$ is
  monotone."
Unless stated otherwise, all conditions are assumed to be in $\bP_s$.
Fix a $p\in \bP_s$, and let $p_{-1}=p$.
In the first step of the proof, we shall construct a sequence $p\ge x_0\ge p_0\ge x_1\ge p_1\ge \dots$ of members of $\bP_s$ such that 
 the following (a)-(c)  hold for each $k<\om$:

\begin{enumerate}
\item[(a)]
 $x_{k+1}\sse x_k$.
\item[(b)]
There is a sequence $i_0<i_1<\dots$ such that
$i_0=\min(\pi_1[p])$,
and in general,
 $$i_{k+1}=\min(\pi_1[x_k]\setminus\{i_0,\dots, i_k\})=\min(\pi_1[x_k]\setminus (i_k+1));$$  
and  for each $n<\om$,
$$\pi_1[x_{k+n}]\cap(i_{k+1}+1)=\pi_1[x_k]\cap(i_{k+1}+1)=\{i_0,\dots,i_{k+1}\}.$$
\item[(c)]
Let $S_k$ denote the set of all triples $(s,t,j)\in\mathcal{P}(\{i_0,\dots,i_k\}\times(i_k+1))
\times\mathcal{P}(\{i_0,\dots,i_k\})\times(i_k+1)$ such that $\pi_1[s]\sse t$.
Fix an enumeration of $S_k$ as $(s_k^l,t_k^l,j_k^l)$, $l\le l_k:=|S_k|-1$.
For all $k<\om$ and $l\le l_k$, $x_k$ and $p_k$ satisfy the following:

{\bf If} there are  $v,q\in\bP_s$  with $q\le v\le  p_{k-1}$ such that 
\begin{enumerate}
\item[(i)]
$v\cap(i_k+1)^2=s_k^l$;
\item[(ii)]
$\pi_1[v]\cap(i_k+1)=t_k^l$;
\item[(iii)]
For all $i\in t_k^l$, $v(i)\setminus(i_k+1)\sse x_k(i)$;
\item[(iv)]
$v\cap((i_k,\om)\times\om)\sse x_k$;
\item[(v)]
$q\forces j_k^l\not\in\dot{f}(v)$;
\end{enumerate}
{\bf Then} 
for each $u\in\bP_s$ such that 
\begin{enumerate}
\item[(vi)]
$u\cap(i_k+1)^2=s_k^l$;
\item[(vii)]
$\pi_1(u)\cap(i_k+1)=t_k^l$;
\item[(viii)]
For all $i\in t_k^l$, $u(i)\setminus(i_k+1)\sse x_k(i)$;
\item[(ix)]
$u\cap((i_k,\om)\times\om)\sse x_k$;
\end{enumerate}
we have that 
 $p_k\forces j_k^l\not\in\dot{f}(u)$.
\end{enumerate}
Note that it follows from (b) that $\pi_1[x_k]\ra\{i_0,i_1,\dots\}$ as $k\ra\infty$.
\vskip.1in

We now begin the recursive construction of the sequences $(x_n)_{n<\om}$ and $(p_n)_{n<\om}$.
Let $p_{-1}=x_{-1}=p$.
Let $k\in\om$ be given, and suppose   we have chosen $x_{k-1},p_{k-1}$.
If $k=0$, let $i_0=\min(\pi_1[p])$;
if $k\ge 1$, let $i_k=\min(\pi_1[x_{k-1}]\setminus (i_{k-1}+1))$.
Recall that  $S_k$ denotes
 the set of all triples $(s,t,j)$ such that
$s\in\mathcal{P}(\{i_0,\dots,i_k\}\times(i_k+1))$,
$t\in\mathcal{P}(\{i_0,\dots,i_k\})$,
and $j\in (i_k+1)$ such that $\pi_1[s]\sse t$.
Let $x_k^{-1}=x_{k-1}$ and $p_k^{-1}=p_{k-1}$.
Suppose we have chosen $x_{k}^{l-1}$ and $p^{l-1}_{k}$ for $ l\le  l_{k}=|S_k|-1$.

{\bf  If}
there are $v,q\in\bP_s$ and $q\le v\le  p_{k}^{l-1}$ such that
\begin{enumerate}
\item[(i)]
$v\cap(i_k+1)^2=s^{l}_k$;
\item[(ii)]
$\pi_1[v]\cap(i_k+1)=t_k^l$;
\item[(iii)]
For all $i\in t_{k}^{l}$, $v(i)\setminus (i_{k}+1)\sse  x_{k}^{l-1}(i)$;
\item[(iv)]
$v\cap ((i_{k},\om)\times\om)\sse x_k^{l-1}$;
\item[(v)]
$q\forces j_{k}^{l}\not\in\dot{f}(v)$;
\end{enumerate}
{\bf then} 
take  $p_{k}^{l}$ and $v_{k}^{l}$ be some such $q$ and $v$.
Note that $p^l_k\forces j^l_k\not\in\dot{f}(v^l_k)$.
Hence, by monotonicity, $p^l_k\forces j^l_k\not\in \dot{f}(v)$, for every $v\sse v^l_k$.
In this case, let 
\begin{align}
x_{k}^{l}=
& \bigcup \{\{i\}\times (v_{k}^l(i)\setminus(i_k+1)) : i\in t_{k}^l\}\cr &\cup\bigcup\{\{i\}\times x_k^{l-1}(i)   : i\in\{i_0,\dots,i_{k}\}\setminus t_{k}^l\}\cr
&\cup
v_{k}^l\cap ((i_{k},\om)\times\om).
\end{align}
Thus, $x^l_k$ is empty on the square $(i_k+1)^2$;
on indices $i\in t^l_k$, the $i$-th fiber of $x^l_k$ equals the $i$-th fiber of $v^l_k$ above $i_k$;
on indices $i\le i_k$ which are not in $t^l_k$, the $i$-th fiber of $x^l_k$ equals the $i$-th fiber of $x^{l-1}_k$;
and for all $i>i_k$, the $i$-th fiber of $x^l_k$ is exactly the same as $v^l_k$.

{\bf Otherwise}, 
for all $v\le p^{l-1}_k$ satisfying (i) - (iv), 
there is no $q\le v$ which forces $j^l_k$ to not be in $\dot{f}(v)$.
Thus, for all $v\le p^{l-1}_k$ satisfying (i) - (iv), $v\forces j^l_k\in\dot{f}(v)$.
In particular, $p^l_k\forces j^l_k\in\dot{f}(v^l_k)$.
In this case,
let $p^{l}_{k}=p^{l-1}_{k}$ and  $x_{k}^{l}=x^{l-1}_{k}$,
and define 
\begin{equation}
v_{k}^l= s_k^l\cup (x_k^l\cap((t^l_k\cup (i_k,\om))\times\om)).
\end{equation}

In the `If' case, $p^l_k\forces j^l_k\not\in \dot{f}(v^l_k)$.
In the `Otherwise' case, $p^l_k\forces j^l_k\in\dot{f}(v^l_k)$.
Thus,
\begin{enumerate}
\item[$(*)$]
For all $k<\om$ and $l\le l_k$,
$p^l_k$ decides the statement ``$j^l_k\in\dot{f}(v^l_k)$".
\end{enumerate}

After the $l_k$ steps, let $p_k=p_k^{l_k}$ and $x_{k}=x_{k}^{l_k}$.
Note that $x_k\cap(i_k+1)^2=\emptyset$, and $x_{k}\sse x_{k-1}$ by (iii) and (iv).
This ends the recursive  construction of the $p_k$ and $x_k$.
It is not hard to see that the sequence of $x_k,p_k$ satisfies (a) - (c). 
\vskip.1in

In the second step, we diagonalize through the $x_n$ to obtain $x$ as follows.
Let $k_0=0$.
Take an $a_{0,0}$ in the fiber  $x_0(i_0)$.
Choose $k_1$ so that $i_{k_1}> a_{0,0}$.
In general, given $k_n$,
for each $j\le i_{k_n}$,
take an $a_{j,n}\in x_{k_n}(i_j)\setminus(i_{k_n}+1)$.
Then choose $k_{n+1}$ so that $i_{k_1}>\max\{a_{j,n}: j\le k_n\}$.
Let 
$$x=\{(i_j,a_{j,n}): n<\om, j\le k_n\}.
$$
Note that $x\in\bP$, $x\sse p$, $\pi_1[x]=\{i_{k_0},i_{k_1},i_{k_2},\dots\}$, and in fact, $x\sse^* x_k$ for all $k<\om$.
Thus, also $x\le p_k$, for all $k<\om$.
Moreover, 
for each $n$, $x\setminus (i_{k_n}+1)^2\sse x_{k_n}$.
From now on,  we work below $x$.

Let us establish some useful notation.
Given any $(s,t)\in[\om^2]^{<\om}\times[\om]^{<\om}$, $k\ge\max(\pi_1[s]\cup t)$, and $v\sse x$,
define
\begin{equation}
z(k,s,t,v)= s\cup(\bigcup_{i\in t}\{i\}\times( v(i)\setminus (k+1)))
\cup( v\cap((k,\om)\times\om)),
\end{equation}
and
\begin{equation}
\dot{f}(k,s,t,v)
=\dot{f}(z(k,s,t,v)).
\end{equation}
Thus, $z(k,s,t,v)$ is the  following condition $z\in\bP_s$:  Inside the square $(k+1)\times(k+1)$, $z$ is exactly $s$.
For indices $i\le k$, $z$ has nonempty fibers if and only if $i\in t$; and if $i\in t$, then the tail of the $i$-th fiber of $z$ above $k$ is exactly the tail of the  $i$-th fiber of $v$ above $k$.
All fibers of $z$ with index greater than $k$ are exactly the fibers of $v$ with index greater than $k$.

Our construction was geared toward establishing  the following.

\begin{claim}\label{claim.dagger}
Let $v\sse x$,
 $n<\om$, 
 and $j\le i_{k_n}$ 
be given; and
let 
$l\le l_{k_n}$ be the integer satisfying 
$s_{k_n}^l=v\cap (i_{k_n}+1)^2$, $t_{k_n}^l=\pi_1[v]\cap(i_{k_n}+1)$,
and $j^l_{k_n}=j$.
Then 
$$ v\forces  j\in \dot{f}(v)  
\Llra 
x\forces j\in\dot{f}(i_{k_n},s_{k_n}^l,t_{k_n}^l, x)
\Llra
x\forces  j\in\dot{f}(v_{k_n}^l).$$
\end{claim}

\begin{proof} Assume 
 $v\sse x$,
 $n<\om$, 
 $j\le i_{k_n}$, and  $l\le l_{k_n}$ satisfy the hypothesis.
By $(*)$,
$p^l_{k_n}$ decides whether or not $j\in\dot{f}(v^l_{k_n})$.
Since $x\le p^l_{k_n}$,
$x$ also decides whether or not $j\in\dot{f}(v^l_{k_n})$.

Suppose $x\forces j\not\in\dot{f}(v_{k_n}^l)$.
Our  choice of $l$ implies that 
\begin{align}\label{eq.sse}
v&=z(i_{k_n},s_{k_n}^l,t^l_{k_n},v)\sse
z(i_{k_n},s_{k_n}^l,t^l_{k_n},x)\sse
z(i_{k_n},s_{k_n}^l,t^l_{k_n},x_{k_n})\\
&\sse
z(i_{k_n},s_{k_n}^l,t^l_{k_n},x_{k_n}^l)\sse
z(i_{k_n},s_{k_n}^l,t^l_{k_n},v^l_{k_n})=v^l_{k_n}.
\end{align}
By monotonicity of $\dot{f}$,  
we have
$x\forces j\not\in\dot{f}(i_{k_n},s_{k_n}^l,t^l_{k_n},x)$, and 
$x\forces j\not\in \dot{f}(v)$.
Hence, also $v\forces j\not\in \dot{f}(v)$.

Suppose now that $x\forces j\in\dot{f}(v_{k_n}^l)$.
Then in the construction of $x_{k_n}^l$, we were in the `Otherwise' case.
Thus, for all pairs $q\le v'\le p_{k_n}^{l-1}$ 
satisying (i) - (iv) for the pair $k_n,l$,
we have that
$q\forces j\in \dot{f}(v')$.
In particular,
$v\forces j\in\dot{f}(v)$.
Further, since $x\le z(i_{k_n},s_{k_n}^l,t_{k_n}^l,x)$,
we have
$x\forces j\in\dot{f}(i_{k_n},s_{k_n}^l,t_{k_n}^l,x)$.
\end{proof}

It follows immediately from Claim \ref{claim.dagger} that 
\begin{enumerate}
\item[$(\dag)$]
For each  $y\sse x$ and $j<\om$, taking $n$ so that $j<i_{k_n}$, the pair of finite sets $(y\cap (i_{k_n}+1)^2,
\pi_1[y]\cap(i_{k_n}+1))$  completely determines whether or not 
$y\forces j\in\dot{f}(y)$.
\end{enumerate}

Now we define a finitary monotone function $\psi$ which $x$ forces to represent $\dot{f}$ on a cofinal subset of $\dot{\mathcal{G}}$.
For all pairs $(s,t)\in[x]^{<\om}\times[\pi_1[x]]^{<\om}$, take $n$ least such that 
$i_{k_n}\ge \max(\pi_1[s]\cup t)$ and 
define 
\begin{equation}\label{def.6}
\psi(s,t)=\{j\le i_{k_n}: x\forces j\in
\dot{f}(i_{k_n},s,t,x)\}.
\end{equation}
Note that $\psi$ is monotone:
Suppose $s\sse s'$ and $t\sse t'$.
Let $n$ be least such that $\max(\pi_1[s]\cup t)\le i_{k_n}$, and 
let $n'$ be least such that $\max(\pi_1[s']\cup t')\le i_{k_{n'}}$.
Let $s''=s'\cap (i_{k_n}+1)^2$ and $t''=t'\cap(i_{k_n}+1)$.
Let $j\le i_{k_n}$.
By  Claim \ref{claim.dagger}, (\ref{def.6}) and monotonicity of $\dot{f}$, 
\begin{align}
j\in\psi(s,t)& \Llra  x\forces j\in \dot{f}(k_n,s,t,x)\cr
& \Lra 
x\forces j\in \dot{f}(k_n,s'',t'',x)\cr
& \Lra
x\forces j\in \dot{f}(k_{n'},s',t',x)\cr
&\Llra
j\in\psi(s',t').
\end{align}

\begin{claim}\label{claim.psigivesf}
If $x$ is in $\mathcal{G}$, then
$\psi$ represents $f$ on $\mathcal{G}_s\re x$. 
That is, for each  $y\in\bP_s$ with $y\sse x$,
$y\forces \dot{f}(y)=\bigcup\{\psi(y\cap(i_{k_n}+1)^2,\pi[y]\cap (i_{k_n}+1)):n<\om\}$.
\end{claim}

\begin{proof}
Let $y\in\bP_s$ such that $y\sse x$.
By the definition of $\psi$,
for each $j$, if $x\forces j\in\dot{f}(y)$, 
then $j\in \psi(y\cap(i_{k_n}+1)^2,\pi_1[y]\cap (i_{k_n}+1))$ for every  $i_{k_n}\ge j$.
On the other hand, if $j\in \psi(y\cap(i_{k_n}+1)^2,\pi_1[y]\cap (i_{k_n}+1))$,
then 
$x\forces j\in
\dot{f}(k_n, y\cap(i_{k_n}+1)^2, \pi_1[y]\cap (i_{k_n}+1),x)$.
Thus,
$y\forces 
 j\in
\dot{f}( y)$, by Claim \ref{claim.dagger}.
\end{proof}

We  simplify  $\vp$ a bit more.
Define the  map $\vp:[x]^{<\om}\ra[\om]^{<\om}$ by letting
 $\vp(s)=\psi(s,\pi_1[s])$,
for each $s\in[x]^{<\om}$.

\begin{claim}
$\vp$ is monotone and represents $\dot{f}\re\bP_s\re x$; that is, for each standard $y\sse x$, 
\begin{equation}
y\forces\dot{f}(y)=\bigcup\{\vp(y\cap(i_{k_n}+1)^2):n<\om\}.
\end{equation}
\end{claim}

\begin{proof}
Suppose $s\sse s'$.
Then $\pi_1[s]\sse \pi_1[s']$, so by monotonicity of $\psi$, we also have $\vp(s)\sse\vp(s')$.

Given $y\in\bP_s$ with $y\sse x$, 
$y\forces\dot{f}(y)\contains\bigcup\{\vp(y\cap(i_{k_n}+1)^2):n<\om\}$, 
by definition of $\vp$ and Claim \ref{claim.psigivesf}.
On the other hand, suppose $y\forces j\in\dot{f}(y)$.
Taking $n$ least such that $i_{k_n}\ge j$, Claim \ref{claim.psigivesf}  and $(\dag)$ imply $j\in\psi(y\cap(i_{k_n}+1)^2,\pi_1[y]\cap(i_{k_n}+1))$. 
Take  $m$ least such that
$\pi_1[y\cap(i_{k_m}+1)^2]\contains \pi_1[y]\cap(i_{k_n}+1)^2$.
Let $s$ be $y\cap(i_{k_m}+1)^2$ restricted to indices in $\pi_1[y]\cap(i_{k_n}+1)^2$.
That is, let $a\in\om^2$ be in $s$ if and only if $a\in y\cap(i_{k_m}+1)^2$ and $\pi_1[a]\in \pi_1[y]\cap(i_{k_n}+1)^2$.
Monotonicity of $\psi$ implies  that 
$j\in\psi(s,\pi_1[y]\cap(i_{k_n}+1))$.
Since $\pi_1[s]=\pi_1[y]\cap(i_{k_n}+1)$,
we have 
$j\in\vp(s)$.
Since $\vp$ is monotone and $y\cap(i_{k_m}+1)^2\contains s$, 
it follows that $j\in\vp(y\cap(i_{k_m}+1)^2)$.
\end{proof}

It follows that in $V[\mathcal{G}]$, for every monotone  map $f:\mathcal{G}\ra\mathcal{P}(\om)$, there is an $x\in\mathcal{G}_s$ such that 
$f\re(\mathcal{G}_s\re x)$ is represented by a monotone 
finitary map $\vp:[x]^{<\om}\ra[\om]^{<\om}$.

If $\mathcal{V}$ is an ultrafilter on a countable base set, we may without loss of generality assume that base is $\om$.
Thus, each monotone cofinal map $f:\mathcal{G}\ra\mathcal{V}$ is represented on a base of the form $\mathcal{G}_s\re x$ by a finitary map $\vp$ on $[x]^{<\om}$, for some $x\in\mathcal{G}$.
\end{proof}

\begin{rmk}
We point out several key observations.
First, the map $\vp$ in Theorem \ref{thm.canon}
actually generates a monotone map from $f^*:\mathcal{P}(\om^2)\ra\mathcal{P}(\om)$, by defining
$f^*(y)=\bigcup_{n<\om}\vp(y\cap (i_{k_n}+1)^2)$, for each $y\sse\om^2$.
It is easy to  check that  $f^*\re\mathcal{G}_s\re x=f\re\mathcal{G}_s\re x$.
Thus,  if $f$  is a monotone cofinal map from $\mathcal{G}$ into some ultrafilter $\mathcal{V}$,  then
$f^*\re\mathcal{G}$ is also a monotone cofinal map from $\mathcal{G}$ into $\mathcal{V}$ which is moreover finitely represented.

Second, the map $f\re\mathcal{G}_s\re x$ is {\em almost continuous} in the following senses.
First, being finitely represented, it quite similar to continuous maps on $\mathcal{P}(\om^2)$, given the Cantor topology on $\mathcal{P}(\om^2)$.
Of more interest, though, is the property $(\dag)$, that for each $y\in\mathcal{G}_s\re x$ and $n<\om$,
the map $\psi$ can decide by time $i_{k_n}$ whether or not $n$ is in $f(y)$.
$\psi$ can do this because it not only looks at $y\cap i_{k_n}$ but also sees the future  of whether or not the fibers $y(l)$ will be nonempty, for $l\le i_{k_n}$.
\end{rmk}

We now answer a question of Blass,  by applying Theorem \ref{thm.canon} to  show that $\mathcal{G}$ does not have maximal Tukey type.

\begin{thm}\label{thm.answerBlass}
For every $\mathcal{V}\le_T\mathcal{G}$, the Tukey type of $\mathcal{V}$ has cardinality continuum.
In particular, the Tukey type of $\mathcal{G}$ has cardinality continuum; hence, $\mathcal{G}$ does not have maximal Tukey type.
\end{thm}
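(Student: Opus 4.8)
The plan is to deduce this from the canonization Theorem~\ref{thm.canon} by a cardinality count, the point being that every monotone cofinal map out of $\mathcal{G}$ is coded, modulo a cofinal restriction, by a single real. Let $\mathcal{V}$ be an ultrafilter on a countable base set, which we identify with $\om$, and suppose $\mathcal{V}\le_T\mathcal{G}$. By the general fact recalled in the introduction (if $\mathcal{U}\le_T D$ then there is a monotone map $D\ra\mathcal{U}$ that is cofinal in $\mathcal{U}$), the reduction $\mathcal{V}\le_T\mathcal{G}$ is witnessed by a monotone map $f\colon\mathcal{G}\ra\mathcal{V}$ that is cofinal in $\mathcal{V}$. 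Applying Theorem~\ref{thm.canon}, there are $x\in\mathcal{G}_s$ and a monotone finitary map $\vp$ representing $f\re(\mathcal{G}_s\re x)$, and by the remark following that theorem, $\vp$ together with the increasing sequence $\langle i_{k_n}:n<\om\rangle$ produced in its proof assembles into a monotone map $f^*\colon\mathcal{P}(\om^2)\ra\mathcal{P}(\om)$ with $f^*\re(\mathcal{G}_s\re x)=f\re(\mathcal{G}_s\re x)$, so that $f^*\re\mathcal{G}$ is again monotone and cofinal in $\mathcal{V}$.

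The crucial observation is that $\mathcal{V}$ is then reconstructible from $f^*$ and the fixed ultrafilter $\mathcal{G}$: since $f^*\re\mathcal{G}$ is cofinal in $\mathcal{V}$, a set $a\sse\om$ belongs to $\mathcal{V}$ exactly when $f^*(A)\sse a$ for some $A\in\mathcal{G}$. In turn, $f^*$ is coded by a real, namely the pair consisting of $\vp$---viewed as a function from the countable set $[\om^2]^{<\om}$ to the countable set $[\om]^{<\om}$, after extending it to a total monotone map, say by $s\mapsto\vp(s\cap x)$---and the sequence $\langle i_{k_n}:n<\om\rangle\in\om^\om$. Since $\bP$ adds no reals (Lemma~\ref{la:noreals}), any such code already lies in the ground model $V$, and there are only $\mathfrak{c}$ reals; hence the collection $\{\mathcal{W}:\mathcal{W}\le_T\mathcal{G}\}$ has cardinality at most $\mathfrak{c}$. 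Consequently, for any $\mathcal{V}\le_T\mathcal{G}$ we have $\{\mathcal{W}:\mathcal{W}\le_T\mathcal{V}\}\sse\{\mathcal{W}:\mathcal{W}\le_T\mathcal{G}\}$, so $\mathcal{V}$ has at most $\mathfrak{c}$ ultrafilters Tukey below it; and it has at least $\mathfrak{c}$, since the $\mathfrak{c}$ many restrictions $\mathcal{V}\re X$ for $X\in\mathcal{V}$ are pairwise distinct ultrafilters, each isomorphic and hence Tukey equivalent to $\mathcal{V}$. This proves the first assertion.

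The ``in particular'' is immediate: taking $\mathcal{V}=\mathcal{G}$ gives that $\mathcal{G}$ has exactly $\mathfrak{c}$ ultrafilters Tukey below it. If $\mathcal{G}$ were of maximal Tukey type, i.e.\ $\mathcal{G}\equiv_T\langle[\mathfrak{c}]^{<\om},\sse\rangle$, then, as recalled in the introduction, every ultrafilter on a countable set would be Tukey below $\mathcal{G}$; but there are $2^{\mathfrak{c}}>\mathfrak{c}$ ultrafilters on $\om$, a contradiction. Hence $\mathcal{G}$ does not have maximal Tukey type. The only substantial work in this argument is Theorem~\ref{thm.canon} itself; the main obstacle that remains is purely bookkeeping---checking that the finitary object attached to $f$ really is a ground-model real and that $\mathcal{V}$ is genuinely recoverable from it---for which the no-new-reals property of $\bP$ and the cofinality of $f^*\re\mathcal{G}$ in $\mathcal{V}$ are exactly the facts invoked.
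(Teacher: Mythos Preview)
Your proof is correct and follows essentially the same route as the paper's: apply Theorem~\ref{thm.canon} to see that every Tukey reduction from $\mathcal{G}$ is coded by a finitary map $\vp:[x]^{<\om}\ra[\om]^{<\om}$, and then count such codes. The paper's argument is considerably terser---it simply observes there are only $\mathfrak{c}$ such finitary maps and concludes---whereas you spell out the recovery of $\mathcal{V}$ from the code, the lower bound via restrictions $\mathcal{V}\re X$, and the $2^{\mathfrak{c}}$-versus-$\mathfrak{c}$ count ruling out maximality; these additions are all correct but not strictly necessary, and your appeal to ``no new reals'' is a slight detour (one only needs that $V[\mathcal{G}]$ has $\mathfrak{c}$ reals, which is immediate).
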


\begin{proof}
By Theorem \ref{thm.canon}, every Tukey reduction $\mathcal{V}\le_T\mathcal{G}$ is witnessed by a function $f$ represented by a finitary monotone  map 
$\vp:[x]^{<\om}\ra[\om]^{<\om}$ for some $x\in \mathcal{G}$.
Since there are only continuum such maps, it follows that
for each $\mathcal{V}\le_T\mathcal{G}$, the Tukey type of $\mathcal{V}$ has cardinality $\mathfrak{c}$.
In particular, $\mathcal{G}$ does not have maximal Tukey type.
\end{proof}

In Theorem \ref{thm.notaboveom_1} we prove that  $([\om_1]^{<\om},\sse)\not\le_T(\mathcal{G}\contains)$.
This answers a question
 of Raghavan in \cite{RaghavanArxivOct2012},
where he
stated that it is easy to see that if in $V$, $\mathfrak{h}(\mathcal{P}(\om^2)/(\fin\times\fin))>\om_1$,
then in $V[\mathcal{G}]$, $([\om_1]^{<\om},\sse)\not\le_T(\mathcal{G}\contains)$, but that 
he did not know whether this holds in general.
This also gives a second  proof that $\mathcal{G}$ has Tukey type strictly below the maximal type.

We first prove the following fact, which will be used in the proof of Theorem \ref{thm.notaboveom_1}.

\begin{prop}\label{fact.monoom_1}
For any ultrafilter $\mathcal{U}$,
if  $(\mathcal{U},\contains)\ge_T([\om_1]^{<\om},\sse)$,
then there is a monotone convergent map $f:\mathcal{U}\ra [\om_1]^{<\om}$ witnessing this.
\end{prop}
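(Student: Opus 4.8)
The plan is to replace the abstract inequality $[\om_1]^{<\om}\le_T\mathcal{U}$ by the concrete data of an $\om_1$-sequence in $\mathcal{U}$, and then build $f$ by hand. Recall that $[\om_1]^{<\om}\le_T(\mathcal{U},\contains)$ is equivalent to the existence of a Tukey map $h:[\om_1]^{<\om}\ra\mathcal{U}$, i.e.\ a map carrying unbounded subsets of $[\om_1]^{<\om}$ to unbounded subsets of $(\mathcal{U},\contains)$. Put $b_\alpha=h(\{\alpha\})\in\mathcal{U}$ for $\alpha<\om_1$. For any infinite $X\sse\om_1$, the family $\{\{\alpha\}:\alpha\in X\}$ is unbounded in $[\om_1]^{<\om}$, since its union $X$ is infinite; hence $\{b_\alpha:\alpha\in X\}$ is unbounded in $(\mathcal{U},\contains)$, which means precisely that there is no $c\in\mathcal{U}$ with $c\sse b_\alpha$ for all $\alpha\in X$. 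Taking contrapositives yields the key property $(\star)$: for every $c\in\mathcal{U}$, the set $\{\alpha<\om_1:c\sse b_\alpha\}$ is finite.

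Using $(\star)$, I would define $f:\mathcal{U}\ra[\om_1]^{<\om}$ by $f(c)=\{\alpha<\om_1:c\sse b_\alpha\}$, which is well defined by $(\star)$. Monotonicity is immediate: if $c\contains c'$ (so $c'$ is larger than $c$ in $(\mathcal{U},\contains)$) and $c\sse b_\alpha$, then $c'\sse c\sse b_\alpha$, so $f(c)\sse f(c')$. For convergence, let $X\sse\mathcal{U}$ be cofinal in $(\mathcal{U},\contains)$, i.e.\ a filter base for $\mathcal{U}$, and let $F=\{\alpha_1,\dots,\alpha_k\}\in[\om_1]^{<\om}$ be arbitrary. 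Since $\mathcal{U}$ is a filter, $b_{\alpha_1}\cap\dots\cap b_{\alpha_k}\in\mathcal{U}$, so there is $c\in X$ with $c\sse b_{\alpha_1}\cap\dots\cap b_{\alpha_k}$; then $F\sse f(c)\in f[X]$. Hence $f[X]$ is cofinal in $[\om_1]^{<\om}$, so $f$ is convergent and witnesses $[\om_1]^{<\om}\le_T\mathcal{U}$, as required.

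The argument is short, so essentially all of its weight rests on the single extraction step $(\star)$: the translation of ``$h$ is a Tukey map'' into ``each member of $\mathcal{U}$ sits below only finitely many of the $b_\alpha$''. That is the only place where the hypothesis is used, and it is the step I would be most careful about; once $(\star)$ is in hand, the definition of $f$ and the verifications of monotonicity and convergence are routine, requiring only attention to the two order conventions --- that ``larger'' in $(\mathcal{U},\contains)$ means ``smaller under $\sse$'', and that ``cofinal in $(\mathcal{U},\contains)$'' means ``a filter base''. I do not anticipate any genuine obstacle beyond keeping these conventions straight.
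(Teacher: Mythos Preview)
Your argument is correct. It differs from the paper's proof in one structural respect: rather than using the Tukey map $g$ on all of $[\om_1]^{<\om}$ and setting $f(u)=\bigcup\{s\in[\om_1]^{<\om}:g(s)\contains u\}$ (the standard ``Tukey dual'' construction), you restrict attention to the values $b_\alpha=h(\{\alpha\})$ on singletons and define $f(c)=\{\alpha:c\sse b_\alpha\}$. Your approach exploits the specific combinatorics of $[\om_1]^{<\om}$---that any infinite family of singletons is already unbounded, and that every element is a finite join of singletons---to get $(\star)$ and the cofinality check directly. The paper's construction is the general one (it works whenever the codomain is a join-semilattice in which bounded sets have joins), while yours is more elementary and tailored to this target; in return, your $(\star)$ makes the well-definedness and convergence verifications a touch more transparent. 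Either way the proof is short, and your identification of $(\star)$ as the only substantive step is accurate.
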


\begin{proof}
Suppose $\mathcal{U}\ge_T[\om_1]^{<\om}$.
Then there is a Tukey map $g:[\om_1]^{<\om}\ra\mathcal{U}$ mapping unbounded subsets of $([\om_1]^{<\om},\sse)$ to unbounded subsets of  $(\mathcal{U},\contains)$.
For each $u\in\mathcal{U}$, define $f(u)=\bigcup\{s\in[\om_1]^{<\om}:g(s)\contains u\}$.
Note that $\{s\in[\om_1]^{<\om}:g(s)\contains u\}=g^{-1}(\{w\in\mathcal{U}:w\contains u\})$ which is  the $g$-preimage of a bounded subset of $\mathcal{U}$.
 Since $g$ is a Tukey map,
 $\{s\in[\om_1]^{<\om}:g(s)\contains u\}$ is bounded by some member of $[\om_1]^{<\om}$;
so $f(u)$ is well-defined.

If $u\contains v$ are in $\mathcal{U}$,
then  
$g^{-1}\{w\in\mathcal{U}:w\contains u\}\sse g^{-1}\{w\in\mathcal{U}:w\contains v\}$.
Therefore,
 $\bigcup g^{-1}\{w\in\mathcal{U}:w\contains u\}\sse\bigcup g^{-1}\{w\in\mathcal{U}:w\contains v\}$;
so $f(u)\sse f(v)$.
Hence, $f$ is monotone.

To see that $f$ is a convergent map, it suffices to check that the $f$-image of $\mathcal{U}$ is cofinal in $[\om_1]^{<\om}$, since $f$ is monotone.
Let $s\in[\om_1]^{<\om}$.
Then $s\in \{t\in[\om_1]^{<\om}:g(t)\contains g(s)\}$,
so
$s\sse\bigcup\{t\in[\om_1]^{<\om}:g(t)\contains g(s)\}= f(g(s))$, which is a member of $f[\mathcal{U}]$.
\end{proof}

\begin{thm}\label{thm.notaboveom_1}
 $(\mathcal{G},\contains)\not\ge_T ([\om_1]^{<\om},\sse)$.
\end{thm}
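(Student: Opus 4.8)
The plan is to argue by contradiction. Assume $(\mathcal{G},\contains)\ge_T([\om_1]^{<\om},\sse)$; by Proposition~\ref{fact.monoom_1} I may fix, in $V[\mathcal{G}]$, a monotone convergent map $f\colon\mathcal{G}\ra[\om_1]^{<\om}$. The tension to exploit is the following. For every $x\in\mathcal{G}_s$ the family $\mathcal{G}_s\re x$ of standard members of $\mathcal{G}$ contained in $x$ is cofinal in $(\mathcal{G},\contains)$ (any $v\in\mathcal{G}$ may be replaced by a standardization of $v\cap x$, which still lies in $\mathcal{G}$ and is $\sse x$), so $f[\mathcal{G}_s\re x]$ is cofinal in $([\om_1]^{<\om},\sse)$; since a cofinal subset of $([\om_1]^{<\om},\sse)$ has union $\om_1$, this gives $\bigcup\{f(u):u\in\mathcal{G}_s\text{ and }u\sse x\}=\om_1$ for every $x\in\mathcal{G}_s$. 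I will contradict this by showing that $f$, restricted to a suitable cofinal $\mathcal{G}_s\re x$, takes all its values inside a \emph{countable} subset of $\om_1$.

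To obtain that countable bound I would run the canonization argument of Theorem~\ref{thm.canon} for the $[\om_1]^{<\om}$-valued name $\dot f$, producing an $x\in\mathcal{G}_s$, a countable $C\sse\om_1$, and a monotone finitary map $\vp\colon[x]^{<\om}\ra[C]^{<\om}$ with $f(u)=\bigcup_n\vp(u\cap(i_{k_n}+1)^2)$ for every standard $u\sse x$. Granting this, $f(u)\sse\bigcup\ran(\vp)\sse C$ for all such $u$, so $\bigcup\{f(u):u\in\mathcal{G}_s,\ u\sse x\}\sse C$ is countable, contradicting the equality above; hence $(\mathcal{G},\contains)\not\ge_T([\om_1]^{<\om},\sse)$.

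The only genuinely nontrivial point — and where I expect the main difficulty — is justifying the canonization for a codomain, $[\om_1]^{<\om}$, that is not an ultrafilter on a countable base, so Theorem~\ref{thm.canon} does not apply off the shelf: in the recursion of its proof the triples $(s,t,j)$ processed at stage~$k$ would have $j$ ranging over an ordinal rather than over $\{0,\dots,i_k\}$. I would run an \emph{adaptive} variant of the construction: maintain a nondecreasing sequence $C_{-1}\sse C_0\sse C_1\sse\cdots$ of \emph{finite} subsets of $\om_1$; at stage~$k$, first extend conditions so as to decide the (forced-finite) values $\dot f(v)$ for the finitely many conditions $v$ and $v^l_k$ examined at that stage and absorb the ordinals occurring in those values into $C_k$, and then carry out the `If/Otherwise' dichotomy of Theorem~\ref{thm.canon} only for the triples $(s,t,j)$ with $j\in C_{k-1}$. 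Since only finitely many conditions are handled at each of countably many stages and each $\dot f(v)$ is forced finite, $C:=\bigcup_k C_k$ is countable, and $\vp$ is defined exactly as in Theorem~\ref{thm.canon} (from $\psi(s,t)=\{j\in C:x\forces j\in\dot f(i_{k_n},s,t,x)\}$ and $\vp(s)=\psi(s,\pi_1[s])$). What must then be checked — the analogue of property~$(\dag)$ and of Claim~\ref{claim.dagger} — is that for each $u\in\mathcal{G}_s\re x$ every ordinal that can lie in $\dot f(u)$ has already been activated, so lies in $C$, and its membership in $\dot f(u)$ is decided by the finite data $(u\cap(i_{k_n}+1)^2,\pi_1[u]\cap(i_{k_n}+1))$ once $n$ is large enough that this ordinal lies in $C_{k_n}$. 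This should follow, as in the original proof, from monotonicity of $\dot f$ together with the fact that each finite approximation $u\cap(i_{k_n}+1)^2$ of such a $u$, and the associated canonical conditions $z(i_{k_n},\cdot,\cdot,x)$ and $v^l_{k_n}$, are among the conditions whose $\dot f$-values were absorbed into $C$ during the construction. Once this is in place, $f(u)\sse C$ for all $u\in\mathcal{G}_s\re x$ and the argument concludes.
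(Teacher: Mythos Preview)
Your high-level strategy matches the paper's: reduce to showing that, below some $x\in\mathcal{G}_s$, the range of $f$ on $\mathcal{G}_s\re x$ lies inside $[C]^{<\om}$ for a countable $C\sse\om_1$, contradicting cofinality. The difficulty you flag is real, and your proposed fix does not close it.

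The gap is in the direction of monotonicity. You propose to decide, at stage $k$, the values $\dot f(v)$ for the finitely many canonical conditions $v_{k}^l$ (or $z(i_{k},s,t,x_{k}^l)$) and absorb those ordinals into $C_k$. But for a general $u\in\mathcal{G}_s\re x$ with initial data $(s,t)$ at level $k_n$ one has $u\sse v_{k_n}^l$, hence by monotonicity $f(v_{k_n}^l)\sse f(u)$, not the reverse. So the absorbed values bound $f(u)$ from \emph{below}, not from above, and nothing prevents an ordinal $j\notin C$ from appearing in $f(u)$. The analogue of Claim~\ref{claim.dagger} you invoke only controls membership of $j$'s that were actually processed, i.e.\ $j\in C$; for $j\notin C$ there is no dichotomy recorded, and your appeal to ``monotonicity together with absorption'' cannot supply one. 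Note also that the conditions $z(i_{k_n},\cdot,\cdot,x)$ depend on the diagonalized $x$, which does not exist during the construction, so they cannot literally be among the conditions whose values were absorbed at stage $k_n$.

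The paper's construction avoids this by reorganizing the recursion so that the ``Otherwise'' branch gives an upper bound valid for \emph{all} $v$ with the given initial data, not just for one witness. Concretely, for each pair $(s_k^l,t_k^l)$ one asks whether there exist $q\le v\le p_k^{l-1}$ with the prescribed initial data and an ordinal $\al$ \emph{not already recorded along the $(s,t)$-history} with $q\forces\al\in\dot f(v)$. If yes, record the least such $\al$ as $\al_k^l$ and refine; if no, set $\al_k^l=\beta$ (a fixed minimal value) and leave things alone. The point is that in the ``no'' case, \emph{every} $v$ below $p_k^{l-1}$ with those initial data satisfies $v\forces\dot f(v)\sse\{\al_m^{j_m}:m\le k\}$: this is exactly an upper bound of the kind your argument was missing. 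Finally, finiteness of $f(y)$ forces the ``no'' case to occur along every branch, since otherwise $x$ would force infinitely many distinct $\al_{k_n}^{j_n}$ into $\dot f(y)$. The countable bound is then $\{\al_m^j:m,j<\om\}$. The essential idea you are missing is to search, at each stage, over \emph{all} admissible $v$ for a \emph{new} ordinal, so that failure of the search bounds $\dot f(v)$ uniformly rather than only for a single witness.
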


\begin{proof}
Let $p\in\bP$ and $\dot{f}$ be a $\bP$-name such that  $p\forces$  ``$\dot{f}$ is a monotone map from $\dot{\mathcal{G}}$ into $[\om_1]^{<\om}$."
We shall show that there is an $x\le  p$ such that $x\forces$ ``$\dot{f}[\dot{\mathcal{G}}]$ is not cofinal in $[\om_1]^{<\om}$".
By Fact \ref{fact.monoom_1}
it will follow that, in $V[\mathcal{G}]$, $\mathcal{G}\not\ge_T[\om_1]^{<\om}$.

\it Case 1. \rm Suppose that  for all $p'\le p$, $p'\forces\dot{f}(p')=0$.
Let $a\in\bP$ with $a\sse p$ be given, and take any $r\in\bP$ with $r\sse a$.
Since $p\forces\dot{f}$ is monotone, it follows that $r\forces \dot{f}(r)\contains\dot{f}(a)$.
 $r\le p$ implies $r\forces \dot{f}(r)=0$.
Thus, $r\forces \dot{f}(a)=0$.
Since the collection of all  $r\in\bP$ such that $r\sse a$ is dense below $a$,
it follows that $a\forces \dot{f}(a)=0$.
Thus, $p\forces``\forall a\in\dot{\mathcal{G}}(a\sse p\ra\dot{f}(a)=0)$''.
Suppose $p\in\mathcal{G}$.
Then in $V[\mathcal{G}]$,
for all $b\in\mathcal{G}$,  $b\cap p\in\mathcal{G}$,
so $f(b\cap p)=0$;
hence $f(b)=0$,
 since $f$ is monotone.
Thus, $p$ forces that $\dot{f}$ is constantly $0$ on $\dot{\mathcal{G}}$.
Letting $x=p$, the first case is finished.

\it Case 2. \rm Suppose now  that there is a $p'\le p$ such that $p'\forces \dot{f}(p')\ne 0$.
Without loss of generality, we may assume $p$ has this property and $p\in\bP_s$.
For the rest of the proof, we restrict to using only standard conditions;
that is, all conditions mentioned are assumed to be in $\bP_s$.
For each  pair $r,q$ such that $r\le q\le p$ and $r$ decides $\dot{f}(q)$,
let $\beta(r,q)$ denote the ordinal in $\om_1$  such that 
$r\forces\beta(r,q)=\min(\dot{f}(q))$.
Define
\begin{equation}
\beta=\min\{\beta(r,q):r\le q\le p\mathrm{\ and\ }r\mathrm{\ decides\ }\dot{f}(q)\}.
\end{equation}

Fix a pair $p_{-1}\le x_{-1}\le p$ for which $p_{-1}$ decides $\dot{f}(x_{-1})$ and such that  $\beta(p_{-1},x_{-1})=\beta$.

\begin{claim}
For all $v\le p_{-1}$ such that $v\sse x_{-1}$, $v\forces\beta=\min(\dot{f}(v))$.
\end{claim}

\begin{proof}
Let $v\le p_{-1}$ such that $v\sse x_{-1}$.  
For each $v'\le v$, there is a $v''\le v'$ deciding $\dot{f}(v)$.
Since $v''\le v \le p$,
we have that $\beta(v'',v)\ge \beta$.
On the other hand, $v\sse x_{-1}$ and $p\forces``\dot{f}$ is monotone'' imply that $v''\forces\dot{f}(v)\contains\dot{f}(x_{-1})$.
Hence, $v''\forces\beta\in\dot{f}(v)$.
By minimality of $\beta(v'',v)$ in $\dot{f}(v)$, 
it must be the case that $\beta(v'',v)=\beta$.
By density, we have that  $v\forces\min\dot{f}(v)=\beta$.
\end{proof}

We now build a decreasing sequence $p\ge x_{-1}\ge p_{-1}\ge x_0\ge p_0\ge x_1\ge p_1\ge\dots$ of members of $\bP_s$ such that 
each $p_n$ decides everything we need to know about $x_n$.
Diagonalizing the $x_n$, we will form an $x\sse p$ in $\bP_s$ such that $x$ forces the range of $\dot{f}$ to be countable.
The construction follows the same general outline as  the one given in the proof of Theorem \ref{thm.canon}.

Let $k\in\om$ be given, and suppose   we have chosen $x_j,p_j$ for all $-1\le j<k$.
If $k=0$, let $i_0=\min(\pi_1[x_{-1}])$;
if $k>0$, let $i_k=\min(\pi_1[x_{k-1}]\setminus (i_{k-1}+1))$.
Define $S_k$ to be 
 the set of all pairs $(s,t)$ such that 
$s\in\mathcal{P}(\{i_0,\dots,i_k\}\times(i_k+1))\cap x_{-1}$,
$t\in\mathcal{P}(\{i_0,\dots,i_k\})$,
and $\pi_1[s]\sse t$.
Fix an enumeration of $S_k$ as $(s_k^l,t_k^l)$, $l\le l_k:=|S_k|-1$.
Define $x_k^{-1}=x_{k-1}$ and $p_k^{-1}=p_{k-1}$.

Suppose $l\le  l_{k}$ and we have chosen $x_{k}^{l-1}$, $p^{l-1}_{k}$, and $\al_k^{l-1}$.
We choose $p_k^l\le x_k^l$ in $\bP_s$ and $\al_k^l\in\om_1$ as follows.

{\bf  If}
there are $v,q\in\bP_s$ with $q\le v\le  p_{k}^{l-1}$ and $\al<\om_1$ such that 
\begin{enumerate}
\item[(i)]
$q$ decides $\dot{f}(v)$;
\item[(ii)]
$v\cap(i_k+1)^2=s^{l}_k$;
\item[(iii)]
$\pi_1[v]\cap(i_k+1)=t_k^l$;
\item[(iv)]
For all $i\in t_{k}^{l}$, $v(i)\setminus (i_{k}+1)\sse  x_{k}^{l-1}(i)$;
\item[(v)]
$v\cap ((i_{k},\om)\times\om)\sse x_k^{l-1}$;
\item[(vi)]
$q\forces  \al\in\dot{f}(v)\setminus(\{\beta\}\cup \{\al_m^j:m<k$, $j\le l_m$,  $s_m^{j}=s_k^l\cap(i_{m}+1)^2$, and
$t_m^{j} =t_k^l\cap(i_{m}+1)\}$);
\end{enumerate}
{\bf then} 
take 
 $\al^l_k$ to be the minimum of all $\al$ satisfying (vi) for some pair $q\le v\le p_k^{l-1}$ satisfying (i) - (vi),
and take  some pair
$q_k^l\le v_k^l\le p_{k}^{l-1}$
satisfying (i) - (vi) with $\al^l_k$.
In this case, let 
\begin{align}
x_{k}^{l}=
& \bigcup \{\{i\}\times (v_{k}^l(i)\setminus (i_k+1)) : i\in t_{k}^l\}\cr
 &\cup\bigcup\{\{i\}\times x_k^{l-1}(i)   : i\in\{i_0,\dots,i_{k}\}\setminus t_{k}^l\}\cr
&\cup
(v_{k}^l\cap ((i_{k},\om)\times\om)).\cr
\end{align}
{\bf Otherwise}, let $p^{l}_{k}=p^{l-1}_{k}$, $x_{k}^{l}=x^{l-1}_{k}$, and $\al^l_k=\beta$; and
define
\begin{equation}
v_{k}^{l}= s_k^l\cup
\bigcup \{ \{i\}\times x_{k}^{l}(i): i\in t_{k}^{l}\}
\cup(x_{k}^{l}\cap((i_{k+1},\om)\times\om)).
\end{equation}

After the $l_k$ steps, let $p_k=p_k^{l_k}$ and $x_{k}=x_{k}^{l_k}$.
The construction guarantees that  for each $k\in\om$, $p_k\le x_k\le p_{k-1}$, $x_k\sse x_{k-1}$, and moreover  for each $l\le l_k$, $v_k^l\sse x_{-1}$.

\begin{claim}\label{fact.betastop}
Suppose $\al^l_k=\beta$ and   $v\sse v_k^l$ such that  $v\le p_k^l$.
For each $m\le k$, let
$j_m\le l_m$  be the integer satisfying
$s_m^{j_m}=v\cap(i_m+1)^2$ and
$t_m^{j_m}=\pi_1[v]\cap(i_m+1)$.
Then 
$v\forces\dot{f}(v)=\{\al^{j_m}_m:m\le k\}$.
Moreover, $\al_n^{j}=\beta$ for each $n>k$ and $j\le l_n$ such that 
$s^{j}_n\cap (i_k+1)^2=s_k^l$ and $t^{j}_n\cap (i_k+1)=t_k^l$.
\end{claim}

\begin{proof}
Suppose $\al^l_k=\beta$,
and suppose
 $v\le p_k^l$, $v\sse v_k^l$  and  satisfies (ii)  and (iii)   for $k,l$.
Since $v\sse v_k^l$,
automatically $v$ also satisfies (iv) and (v) for $k,l$.
Then for  each $q\le v$ satisfying (i) and each $\al<\om_1$,
(vi) does not hold, meaning that 
$q\forces \dot{f}(v)\sse \{\al^{j_m}_m:m\le k\}$.
Since it is dense below $v$ to have a $q$ for which (i)  holds, 
it follows that $v\forces\dot{f}(v)\sse \{\al^{j_m}_m:m\le k\}$.

On the other hand, for each $m\le k$,
since 
$v\sse v_m^{j_m}$, $p_m^{j_m}\forces \al_m^{j_m}\in\dot{f}(v_k^{j_k})$, and
 $p\forces \dot{f}$ is monotone,
it follows that $p_m^{j_m}\forces \al_m^{j_m}\in\dot{f}(v)$.
Hence, $p_k^l\forces \{\al^{j_m}_m:m\le k\}\sse\dot{f}(v)$.
Since $v\le p_k^l$, $v$ also forces $\{\al^{j_m}_m:m\le k\}\sse\dot{f}(v)$.

The second half follows since  $\al^l_k=\beta$ implies  (vi) fails for the pair $k,l$.
\end{proof}

Next,
diagonalize the $x_k$ similarly as in the proof of Theorem \ref{thm.canon}.
Let $k_0=0$.
Given $k_n$,
for each $j\le k_n$, choose an integer $a_{j,n}\in
x_{k_n}(i_j)\setminus(i_{k_n}+1)$.
Then take $k_{n+1}$ so that $i_{k_n+1}>\max\{a_{j,n}:j\le k_n\}$.
Let $x=\{(i_j,a_{j,n}):n<\om,\ j\le k_n\}$.
Note that $x\sse x_{-1}$, $x\le p_{-1}$, and  $x\sse^* x_k$ for all $k<\om$. Moreover, for each $n<\om$, we have $x\setminus(i_{k_n}+1)^2\sse x_{k_n}$.

\begin{claim}\label{claim.k_n}
For each $y\sse x$
 and each $n$,
if  $l\le l_{k_n}$
satisfies
$s_{k_n}^l = y\cap(i_{k_n}+1)^2$ and 
$t_{k_n}^l=\pi_1[y]\cap(i_{k_n}+1)$,
then
$x\forces\al_{k_n}^l\in\dot{f}(y)$.
\end{claim}

\begin{proof}
Let $y\sse x$, and suppose $n$ and $l$ satisfy the hypothesis.
Recall that $x\forces \al_{k_n}^l\in\dot{f}(v_{k_n}^l)$.
Since $x$ forces $\dot{f}$ to be monotone, it follows that for
each $v\sse v_{k_n}^l$, we have 
$x\forces \al_{k_n}^l\in\dot{f}(v)$.
  $y\sse x$ implies that $y\setminus (i_{k_n}+1)^2\sse x_{k_n}$.
This along with the fact that $y\cap (i_{k_n}+1)^2= v^l_{k_n}\cap (i_{k_n}+1)^2$
and $\pi_1[y]\cap (i_{k_n}+1)=v^l_{k_n}\cap (i_{k_n}+1)$ implies $y\sse v_{k_n}^l$.
It follows that 
$x\forces \al_{k_n}^l\in\dot{f}(y)$.
\end{proof}

\begin{claim}\label{claim.finite}
Let $n<\om$ and $l\le l_n$.
If  $\al_{k_n}^{l}\ne\beta$,
then for all $m< n$,
 $\al_{k_{n}}^{l}\ne\al_{k_m}^{j_m}$,
where $j_m\le l_m$ is the integer satisfying 
$s_{k_m}^{j_m}= s_{k_n}^{l}\cap(i_{k_m}+1)^2$ and $t_{k_m}^{j_m}=t_{k_n}^l\cap(i_{k_m}+1)$.
\end{claim}

\begin{proof}
Suppose  $\al_{k_n}^{l}\ne\beta$.
In the construction of $v_{k_n}^{l}$,
 (vi)
 implies that $\al_{k_n}^{l}\not\in\{\beta\}\cup\{\al_k^j:
k<k_n$, $j\le l_k$, $s^j_k=s_{k_n}^{l}\cap(i_{k}+1)^2$, and $t_k^j=t_{k_n}^{l}\cap (i_k+1)\}$.
In particular, 
$\al_{k_n}^{l}\not\in\{\al_{k_m}^{j_m}:m<n\}$.
\end{proof}

Now suppose $y\sse x$ and $y$ is a standard condition.
 For each $n<\om$, let $j_n\le l_{k_n}$ be the integer such that
$y\cap(i_{k_n}+1)^2=s_{k_n}^{j_n}$ and $\pi_1[y]\cap(i_{k_n}+1)=t_{k_n}^{j_n}$. 
Suppose that for all $n$, $\al_{k_n}^{j_n}\ne\beta$.
Then
by  Claims \ref{claim.k_n} and  \ref{claim.finite}, 
$x$ forces that $\dot{f}(y)$ is infinite.
This contradicts that $p$ forces $\dot{f}$ to have range in $[\om_1]^{<\om}$.
Thus, there is an $n<\om$ for which  $\al_{k_n}^{j_n}=\beta$.
It follows from Claim \ref{fact.betastop} that 
$y\forces \dot{f}(y)=\{\al_m^{j_m}:m\le k_n\}$,
where $j_m\le l_m$ is the integer such that $s_m^{j_m}=y\cap(i_m+1)^2$ and $t_m^{j_m}=\pi_1[y]\cap(i_m+1)$.

Thus, $x$ forces that the range of $\dot{f}$
is countable:
For each standard $y\sse x$, $y$ forces $\dot{f}(y)$ to be a finite subset of $\{\al_m^j:m,j<\om\}$.
If $z\sse x$ is a nonstandard condition in $\bP$, 
then for each $y\in\bP_s$ such that $y\sse z$, we have $y\forces \dot{f}(y)\contains\dot{f}(z)$;
so $z$ forces $\dot{f}(z)$ is  a finite subset of $\{\al_m^j:m,j<\om\}$.
Hence, 
if $x\in\mathcal{G}$, then in $V[\mathcal{G}]$,
 every $z\in\mathcal{G}$  has the property that  $f(z)$ is a finite subset of $\{\al_m^j:m,j<\om\}$.
Therefore, $x$ forces that $\dot{f}$ does not map  $\dot{\mathcal{G}}$ cofinally into $[\om_1]^{<\om}$.
\end{proof}

\section{Tukey Maps on Generic Ultrafilters} \label{sec:nicemaps}
The results in Section \ref{sec:nicemaps} and in Section \ref{sec:notbasicallygenerated} are due to Raghavan.
These results were obtained in mid-October 2012 during the Fields
Institute's thematic program on Forcing and its applications. 

Let $\XX \subseteq \Pset(\omega)$.
Recall that a map $\phi:\XX \rightarrow \Pset(\omega)$ is said to be
\emph{monotone} if $\forall a, b \in \XX \left[b \subseteq a \implies
  \phi(b) \subseteq \phi(a)\right]$. 
Such a map is said to be \emph{non-zero} if $\forall a \in
\XX\left[\phi(a) \neq 0 \right]$. 

We will show in this section that any monotone maps defined on the generic ultrafilter $\dot{\GG}$ have a ``nice'' canonical form similar to what is obtained in Section 4 of \cite{tukey}. 
This will imply that if $\GG$ is $(\V, \bbb{P})$-generic, then in
$\V\left[\GG\right]$ there are only $\c$ many ultrafilters that are
Tukey below $\GG$. 
This gives yet another proof that the generic ultrafilter is not of
the maximal cofinal type for directed sets of size continuum. 
The proof will go through the corresponding result for the sum of
selective ultrafilters indexed by a selective ultrafilter. 
Recall the following definitions and results which appear in \cite{tukey}.
\begin{df}\label{def:psi} 
  Let $\XX \subseteq \Pset(\omega)$ and let $\phi: \XX \rightarrow
  \Pset(\omega)$. Define ${\psi}_{\phi}: \Pset(\omega) \rightarrow
  \Pset(\omega)$ by ${\psi}_{\phi}(a) = \{k \in \omega: \forall b \in
  \XX\left[a \subseteq b \implies k \in \phi(b)\right] \} =
  \bigcap\{\phi(b): b \in \XX \wedge a \subseteq b\}$, for each $a \in
  \Pset(\omega)$.
\end{df}
\begin{la} [Lemma 16 of \cite{tukey}] \label{lem:findetermines} Let
  $\U$ be basically generated by $\BB \subseteq \U$.  Suppose moreover
  that $\forall {b}_{0}, {b}_{1} \in \BB \left[\; {b}_{0} \cap {b}_{1}
    \in \BB\right]$.  Let $\phi: \BB \rightarrow \Pset(\omega)$ be a
  monotone map such that $\phi(b) \neq 0$ for every $b \in \BB$.  Let
  $\psi = {\psi}_{\phi}$.  Then for every $b \in \BB$, ${\bigcup}_{s
    \in {\left[b\right]}^{< \omega}}{\psi(s)} \neq 0$.
\end{la}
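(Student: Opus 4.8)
The plan is to argue by contradiction. Suppose $b\in\BB$ but $\bigcup_{s\in[b]^{<\omega}}\psi(s)=\emptyset$; I will produce a single $b^{*}\in\BB$ with $\phi(b^{*})=\emptyset$, contradicting the assumption that $\phi(b')\neq\emptyset$ for all $b'\in\BB$. Since $\U$ is non-principal, $b$ is infinite; fix its increasing enumeration $b=\{a_{0}<a_{1}<\cdots\}$ and set $s_{n}=\{a_{0},\dots,a_{n-1}\}$, so that $s_{0}\subseteq s_{1}\subseteq\cdots$ are finite subsets of $b$ with union $b$, and each $s_{n}$ lies in $[b]^{<\omega}$, whence $\psi(s_{n})=\emptyset$ by our assumption.

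First I would convert each equation $\psi(s_{n})=\emptyset$ into a member of $\BB$ on which $\phi$ is large. Unravelling Definition \ref{def:psi}, for every $k\le n$ there is $b_{n,k}\in\BB$ with $s_{n}\subseteq b_{n,k}$ and $k\notin\phi(b_{n,k})$. Using that $\BB$ is closed under finite intersections, put $b_{n}=\bigcap_{k\le n}b_{n,k}\in\BB$; then $s_{n}\subseteq b_{n}$, and monotonicity of $\phi$ gives $\phi(b_{n})\subseteq\phi(b_{n,k})$, so $\phi(b_{n})\cap(n+1)=\emptyset$. Next, trim this down to a subset of $b$: let $c_{n}=b_{n}\cap b$, again in $\BB$ by the finite-intersection hypothesis, with $s_{n}\subseteq c_{n}\subseteq b$ and, since $c_{n}\subseteq b_{n}$, still $\phi(c_{n})\cap(n+1)=\emptyset$. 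The inclusions $s_{n}\subseteq c_{n}\subseteq b$ force $\langle c_{n}:n\in\omega\rangle$ to converge to $b$ in the Cantor topology on $\Pset(\omega)$: for any $m$, as soon as $n\ge|b\cap(m+1)|$ we have $b\cap(m+1)\subseteq s_{n}\subseteq c_{n}\subseteq b$, so $c_{n}\cap(m+1)=b\cap(m+1)$.

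Finally I would apply basic generation. Since $\langle c_{n}\rangle\subseteq\BB$ converges to $b\in\BB\subseteq\U$, there is $X\in\cube$ with $d:=\bigcap_{n\in X}c_{n}\in\U$; since $\BB$ is a base for $\U$, choose $b^{*}\in\BB$ with $b^{*}\subseteq d$. Then $b^{*}\subseteq c_{n}$ for every $n\in X$, so by monotonicity $\phi(b^{*})\subseteq\bigcap_{n\in X}\phi(c_{n})\subseteq\bigcap_{n\in X}[n+1,\infty)=\emptyset$, the last set being empty because $X$ is infinite. This gives the contradiction $\phi(b^{*})=\emptyset$.

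The one place demanding care is the double appeal to closure of $\BB$ under finite intersections in the middle paragraph: it is used first to merge the witnesses $b_{n,k}$ into a single set of $\BB$ that pushes $\phi$ beyond $n$, and then to shrink it to a subset of $b$ while staying in $\BB$ --- and it is precisely this second shrinking that makes the sequence converge to $b$, which is what lets the definition of ``basically generated'' be invoked. Everything else is routine verification.
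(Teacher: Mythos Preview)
Your proof is correct. The paper does not actually give its own proof of this lemma; it is quoted verbatim as Lemma~16 of \cite{tukey} and used as a black box. Your argument is essentially the standard one: extract witnesses from $\psi(s_n)=\emptyset$, intersect them (using closure of $\BB$ under finite intersections) to push $\phi$ past $n$, squeeze into $b$ to force convergence, and then invoke basic generation to find a common refinement in $\U$ on which $\phi$ must vanish. The only remark worth making is that your emphasis in the final paragraph is well placed --- the second intersection with $b$ is exactly what manufactures convergence to an element of $\BB$, and without the closure hypothesis on $\BB$ this step would fail.
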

Clearly Definition \ref{def:psi} and Lemma \ref{lem:findetermines}
apply to $\Pset({\omega}^{2})$ as well with the obvious modifications.

Let $\E$ and $\langle {\VV}_{n}: n \in \omega \rangle$ be selective
ultrafilters.  Put $\VV = \scr E\sm_n\scr V_n$.  Consider ${\BB}_{\VV}
= \{b \subseteq {\omega}^{2}: {\pi}_{1}(b) \in \E \ \text{and} \
\forall n \in {\pi}_{1}(b)\left[b(n) \in {\VV}_{n}\right]\}$.  Then
the following is easy to prove.  For a more general statement see
\cite{tukey}.
\begin{la} \label{lem:bv}
$\VV$ is basically generated by ${\BB}_{\VV}$. 
Moreover 
\begin{align*}
  \forall {b}_{0}, {b}_{1} \in {\BB}_{\VV} \left[\; {b}_{0} \cap
    {b}_{1} \in {\BB}_{\VV} \right].
\end{align*}
\end{la}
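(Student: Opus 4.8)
The plan is to separate the two assertions. Closure under finite intersections, together with the fact that ${\BB}_{\VV}$ is a base for $\VV$, is a direct inspection. For ${b}_{0},{b}_{1}\in{\BB}_{\VV}$ one has ${\pi}_{1}({b}_{0}\cap{b}_{1})={\pi}_{1}({b}_{0})\cap{\pi}_{1}({b}_{1})$: every point of ${b}_{0}\cap{b}_{1}$ projects into both ${\pi}_{1}({b}_{0})$ and ${\pi}_{1}({b}_{1})$, while conversely, for $n\in{\pi}_{1}({b}_{0})\cap{\pi}_{1}({b}_{1})$ the section $({b}_{0}\cap{b}_{1})(n)={b}_{0}(n)\cap{b}_{1}(n)$ is the intersection of two sets in the nonprincipal ultrafilter ${\VV}_{n}$, hence infinite and in particular nonempty. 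Since $\E$ is a filter, ${\pi}_{1}({b}_{0}\cap{b}_{1})\in\E$, and for every $n$ in this set $({b}_{0}\cap{b}_{1})(n)\in{\VV}_{n}$, so ${b}_{0}\cap{b}_{1}\in{\BB}_{\VV}$. The same sort of bookkeeping shows ${\BB}_{\VV}\sse\VV$ (the set of $n$ with $b(n)\in{\VV}_{n}$ contains ${\pi}_{1}(b)\in\E$) and that ${\BB}_{\VV}$ is a base: if $X\in\VV$ then $A=\{n:X(n)\in{\VV}_{n}\}\in\E$, each $X(n)$ with $n\in A$ is infinite, so $b=X\restrict A\sse X$ satisfies ${\pi}_{1}(b)=A\in\E$ and $b(n)=X(n)\in{\VV}_{n}$ for $n\in A$, whence $b\in{\BB}_{\VV}$.

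For the basic-generation clause, suppose $\langle{b}_{k}:k\in\omega\rangle\sse{\BB}_{\VV}$ converges to $b\in{\BB}_{\VV}$, and put $A={\pi}_{1}(b)$, ${A}_{k}={\pi}_{1}({b}_{k})$, all in $\E$. It suffices to find an infinite $X\sse\omega$ with ${\bigcup}_{k\in X}(b\setminus{b}_{k})\notin\VV$; then, since $b\in\VV$, the set $b\setminus{\bigcup}_{k\in X}(b\setminus{b}_{k})=b\cap{\bigcap}_{k\in X}{b}_{k}$ lies in $\VV$ and is contained in ${\bigcap}_{k\in X}{b}_{k}$, so ${\bigcap}_{k\in X}{b}_{k}\in\VV$, as required. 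The only feature of convergence that is used is that each point of $b$ belongs to only finitely many of the sets $b\setminus{b}_{k}$; equivalently, for $n\in A$ one has $n\in{A}_{k}$ for all but finitely many $k$, and for every $n$ the sections ${b}_{k}(n)$ converge to $b(n)$ in ${2}^{\omega}$. I would build $X=\{{k}_{0}<{k}_{1}<\cdots\}$ in two stages. First, using that $\E$ is selective, fix a pseudo-intersection ${C}_{0}\in\E$ of the ${A}_{k}$ (which exists because $\E$ is a P-point) and note that the sets ${D}_{k}={C}_{0}\setminus{A}_{k}$ are finite and form a locally finite family on ${C}_{0}$, so that, where defined, $m\mapsto\min\{k:m\in{D}_{k}\}$ and $m\mapsto\max\{k:m\in{D}_{k}\}$ are finite-to-one; a Ramsey argument for pairs in $\E$ then gives ${C}_{1}\in\E$ along which the finite blocks $\{k:m\in{D}_{k}\}$ are pairwise disjoint and increasing, and taking $X$ to be the union of the blocks indexed by whichever of two infinite halves of ${C}_{1}$ is not in $\E$ yields an infinite $X$ with ${\bigcap}_{k\in X}{A}_{k}\in\E$. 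In the second stage I refine $X$ — which only enlarges ${\bigcap}_{k\in X}{A}_{k}$, hence keeps it in $\E$ — so that for $\E$-many $n$ the section ${\bigcap}_{k\in X}{b}_{k}(n)$ contains a set of ${\VV}_{n}$: for each such $n$ the ${b}_{k}(n)$ with $k\in X$ are already in ${\VV}_{n}$ and converge to $b(n)\in{\VV}_{n}$, so the same device applied inside the $n$-th column with ${\VV}_{n}$ in place of $\E$ settles one column, and the columns are handled together by a stepwise fusion activating one member of an enumeration of ${\bigcap}_{k\in X}{A}_{k}$ at each stage.

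The step I expect to be the real difficulty is exactly this last fusion: one must produce a \emph{single} infinite $X$ that simultaneously keeps ${\bigcap}_{k\in X}{A}_{k}$ in $\E$ and, for $\E$-many $n$, keeps ${\bigcap}_{k\in X}{b}_{k}(n)$ in ${\VV}_{n}$, whereas a careless choice of $X$ can shrink infinitely many sections ${b}_{k}(n)$ out of ${\VV}_{n}$ at once. A cleaner route that bypasses running the fusion by hand is to invoke the Dobrinen--Todorcevic observation recalled in Section~\ref{intro}, that every ultrafilter obtained from P-points by iterating the sum operation of Definition~\ref{df:filtersum} is basically generated by a base closed under finite intersections: since $\E$ and the ${\VV}_{n}$ are selective, hence P-points, $\VV=\E\sm_n{\VV}_{n}$ is of this form, and it remains only to observe that the witnessing base may be taken to be ${\BB}_{\VV}$, which by the first paragraph is a base for $\VV$ closed under finite intersections.
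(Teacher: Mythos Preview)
Your first paragraph is correct and complete; closure of ${\BB}_{\VV}$ under finite intersection and the fact that it is a base for $\VV$ are exactly the routine checks you give.

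For the basic-generation clause, the paper itself supplies no argument: it simply says the lemma is easy and refers to \cite{tukey} for a more general statement. So you are attempting considerably more than the paper does. Your sketch is in fact correct in outline, and the step you flag as ``the real difficulty'' is not one. The point you are missing is monotonicity in the right direction: if $X'\subseteq X$ then $\bigcap_{k\in X'}{b}_{k}(n)\supseteq\bigcap_{k\in X}{b}_{k}(n)$, so shrinking $X$ to accommodate a new column can only help the columns already handled. Thus the fusion succeeds by a straightforward diagonalization: enumerate $A\cap\bigcap_{k\in X}{A}_{k}=\{{n}_{0}<{n}_{1}<\cdots\}$ (which lies in $\E$), recursively choose infinite $X=X_{0}\supseteq X_{1}\supseteq\cdots$ with $\bigcap_{k\in X_{j+1}}{b}_{k}({n}_{j})\in{\VV}_{{n}_{j}}$ (your first-stage device, applied in column ${n}_{j}$ with ${\VV}_{{n}_{j}}$ in place of $\E$), pick ${k}_{j}\in X_{j}$ increasing, and set $X'=\{{k}_{j}:j\in\omega\}$. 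For each $j$ the tail of $X'$ past ${k}_{j}$ lies in $X_{j+1}$, while the finite initial segment $\{{k}_{0},\dots,{k}_{j}\}$ contributes sections ${b}_{{k}_{i}}({n}_{j})\in{\VV}_{{n}_{j}}$ because ${n}_{j}\in\bigcap_{k\in X}{A}_{k}$; hence $\bigcap_{k\in X'}{b}_{k}({n}_{j})\in{\VV}_{{n}_{j}}$ for every $j$, and $\bigcap_{k\in X'}{b}_{k}\in\VV$. Your first-stage argument is also correct once one observes (as you implicitly do) that one should take ${C}_{0}\subseteq A$, so that each $m\in{C}_{0}$ lies in only finitely many ${D}_{k}$, and that selectivity of $\E$ lets one thin ${C}_{0}$ so that the blocks $\{k:m\in{D}_{k}\}$ become pairwise disjoint; a homogeneous set of the wrong color would force infinitely many distinct values of $\min\{k:m\in{D}_{k}\}$ below a fixed bound.

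Your fallback to the Dobrinen--Todorcevic remark is a bit loose: that statement, as quoted in the introduction, gives \emph{some} base closed under finite intersections, not ${\BB}_{\VV}$ specifically, and basic generation does not transfer automatically between bases. What makes the citation legitimate is that the proof in \cite{tukey} (and in \cite{dt}) is carried out precisely for the base ${\BB}_{\VV}$; this is why the paper cites \cite{tukey} rather than \cite{dt} here. If you want to keep the fallback, cite \cite{tukey} and note that the argument there is for this very base.
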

Thus Lemma \ref{lem:findetermines} can be applied to any sum over a
selective ultrafilter of selective ultrafilters.  We will do this
below to some selective ultrafilters that are generically added to a
ground model.
\begin{thm} \label{thm:nicemaps} Let $\GG$ be $(\V, \bbb{P})$-generic.
  In $\V\left[\GG\right]$, let $\phi: \GG \rightarrow \Pset(\omega)$
  be a monotone non-zero map.  Then there exist $P \subseteq
  {\left[{\omega}^{2} \right]}^{< \omega}$ and $\psi: P \rightarrow
  \omega$ such that
\begin{enumerate} 
 \item
  $\forall a \in \GG\left[P \cap {\left[a\right]}^{< \omega} \neq 0 \right]$.
\item $\forall a \in \GG \exists b \in \GG \cap
  {\left[a\right]}^{\omega}\forall s \in P \cap {\left[b\right]}^{<
    \omega}\left[\psi(s) \in \phi(b)\right]$.
\end{enumerate}
\end{thm}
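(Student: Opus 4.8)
The plan is to obtain this from the canonization already proved for $\GG$ in Theorem~\ref{thm.canon}. Since $\phi$ is monotone, that theorem supplies an $x\in\mathcal{G}_s$ and a monotone finitary map $\vp:[x]^{<\omega}\ra[\omega]^{<\omega}$ (in the sense of Definition~\ref{defn.represents}) with $\phi(u)=\bigcup_{n<\omega}\vp(u\cap n^2)$ for every $u\in\mathcal{G}_s\re x$. I would then take
\[
P=\{s\in[x]^{<\omega}:\vp(s)\neq 0\}\subseteq[\omega^2]^{<\omega},\qquad \psi(s)=\min\vp(s)\ \text{for}\ s\in P,
\]
so that the whole problem reduces to checking (1) and (2) for this $P$ and $\psi$. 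The role of the hypotheses is then clear: the non-zeroness of $\phi$ will guarantee that $P$ meets $[a]^{<\omega}$ for each $a\in\GG$, while the monotonicity of $\vp$ will guarantee that the values $\psi(s)$ lie in $\phi(b)$ for a suitable $b$.

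For the verification, fix $a\in\GG$ and let $b$ be the standardization of $a\cap x$ (which makes sense since $a,x\in\GG\subseteq\bP$). Standardizing only deletes finite fibers, so $b\equiv a\cap x$ modulo $\scr F^{\otimes2}$; since $a\cap x\in\GG$ and $\GG\supseteq\scr F^{\otimes2}$ we get $b\in\GG$, and clearly $b\sse a$ and $b\sse x$, so $b\in\mathcal{G}_s\re x$. \emph{Property (1):} because $\phi$ is non-zero, $\phi(b)=\bigcup_n\vp(b\cap n^2)\neq 0$, so $\vp(b\cap n^2)\neq 0$ for some $n$; then $s:=b\cap n^2$ is a finite subset of $b\sse a$ with $\vp(s)\neq 0$, so $s\in P\cap[a]^{<\omega}$. \emph{Property (2):} the same $b$ belongs to $\GG\cap[a]^{\omega}$, and for any $s\in P\cap[b]^{<\omega}$, choosing $m$ with $s\sse m\times m$ gives $s\sse b\cap m^2$, whence, by monotonicity of $\vp$ and the representation formula, $\psi(s)=\min\vp(s)\in\vp(s)\sse\vp(b\cap m^2)\sse\bigcup_k\vp(b\cap k^2)=\phi(b)$. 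This gives (1) and (2).

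Granting Theorem~\ref{thm.canon}, the only point needing care is the interaction with standardization: Theorem~\ref{thm.canon} controls $\phi$ only on standard sets below $x$, so one must be sure that replacing an arbitrary $a\in\GG$ by a standard $b\sse x$ does not destroy the finite sets $s$ being tracked -- which it does not, since only whole finite fibers and $\scr F^{\otimes2}$-null pieces are removed. If instead one wants a proof independent of Theorem~\ref{thm.canon} -- as the preparatory Lemmas~\ref{lem:findetermines} and~\ref{lem:bv} suggest -- one would transfer the basically-generated case: assuming $\mathrm{CH}$ in $\V$ (harmless after a $\sigma$-closed, real-preserving preliminary forcing, since (1)--(2) are downward absolute), fix $p\in\bP$ forcing $\dot\phi$ monotone non-zero, define in $\V$ the monotone non-zero map $\Phi(c)=\{k:\exists\,c'\le c\;(c'\forces\check k\in\dot\phi(\check c))\}$ on conditions, realize a given standard $b_0\le p$ inside the base $\BB_\VV$ of a sum $\VV=\scr E\sm_n\scr V_n$ of pairwise non-isomorphic selective ultrafilters (so $\pi_1(b_0)\in\scr E$ and $b_0(n)\in\scr V_n$ for $n\in\pi_1(b_0)$), and apply Lemmas~\ref{lem:findetermines} and~\ref{lem:bv} to $\Phi$ on the base $\BB_\VV$ below $b_0$ to extract a finite $s\sse b_0$ and a $k$ with $k\in\Phi(c)$ for all $c\in\BB_\VV$ below $b_0$ that contain $s$; one then translates this back into a statement forced about $\dot\GG$. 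On this route the main obstacle is precisely that last step: the intersection produced by Lemma~\ref{lem:findetermines} ranges over $\BB_\VV$ rather than over $\GG$, so one must exploit that $\VV$ occurs only as an auxiliary object in $\V$ while the conditions of $\BB_\VV$ lying below $b_0$ are dense below $b_0$ in the separative quotient, so that the combinatorics computed with $\VV$ in fact control $\dot\GG$ below $b_0$.
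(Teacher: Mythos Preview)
Your primary argument, deriving the theorem directly from Theorem~\ref{thm.canon}, is correct. Given the finitary representation $\vp$ of $\phi$ on $\mathcal{G}_s\re x$, your choices $P=\{s\in[x]^{<\omega}:\vp(s)\ne 0\}$ and $\psi(s)=\min\vp(s)$ work exactly as you say: the standardization of $a\cap x$ supplies a single witness $b\in\mathcal{G}_s\re x$ for both (1) and (2), and monotonicity of $\vp$ does the rest.

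This is, however, a genuinely different route from the paper's. The paper does \emph{not} invoke Theorem~\ref{thm.canon}; instead it argues by contradiction. Assuming no $P,\psi$ work below some ${p}_{0}$, it defines the monotone non-zero map $\chi(p)=\{k:\exists q\le p\;[q\forces k\in\dot\phi(p)]\}$ on $\bbb P$, enumerates all triples $\langle p,A,\psi\rangle$, and for each builds a condition $q_\alpha\le p_\alpha$ either avoiding $A_\alpha$ or having some $s\in A_\alpha\cap[q_\alpha]^{<\omega}$ with $\psi_\alpha(s)\notin\chi(q_\alpha)$. Then it \emph{generically adds} selective ultrafilters $\scr E$ and $\scr V_n$ (via $\Pset(\omega)/\scr F$ and a countable product) so that some $q_\alpha$ lands in the base $\BB_\VV$ of the sum $\VV=\scr E\sm_n\scr V_n$; Lemma~\ref{lem:findetermines} applied to $\chi\restriction\BB_\VV$ then produces the contradiction. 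Your approach is shorter and more transparent but leans on the substantial machinery of Theorem~\ref{thm.canon}; the paper's approach is self-contained relative to the basically-generated technology of \cite{tukey} and Section~\ref{sum}, and in particular makes the role of Lemmas~\ref{lem:findetermines} and~\ref{lem:bv} essential rather than decorative. Your second sketch points in the paper's direction, but note that the paper does not reduce to $\mathrm{CH}$ in $\V$; it instead forces the needed selectives over $\V$ and uses a density argument to place some $q_\alpha$ in $\BB_\VV$, which is precisely how the ``$\BB_\VV$ versus $\GG$'' obstacle you flag is resolved.
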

\begin{proof}
Suppose that the theorem fails.
Fix $\dot{\phi} \in {\V}^{\bbb{P}}$ such that 
\begin{align*}
 \forces {\dot{\phi}: \dot{\GG} \rightarrow \Pset(\omega) \ \text{is a
     monotone non-zero map}}. 
\end{align*}
Fix a standard ${p}_{0} \in \bbb{P}$ such that for any $P \subseteq
{\left[ {\omega}^{2} \right]}^{< \omega}$ and $\psi: P \rightarrow
\omega$,
\begin{align*}
  {p}_{0} \; \forces \; ``\text{either} & \ \exists a \in
  \dot{\GG}\left[P \cap {\left[a\right]}^{< \omega} = 0 \right] \\ &
  \text{or} \ \exists a \in \dot{\GG} \forall b \in \dot{\GG} \cap
  {\left[a\right]}^{\omega} \exists s \in P \cap {\left[b\right]}^{<
    \omega}\left[\psi(s) \notin \dot{\phi}(b)\right]''.
\end{align*}
Let $\{\langle {p}_{\alpha}, {A}_{\alpha}, {\psi}_{\alpha} \rangle:
\alpha < {\c}^{\V} \}$ enumerate all triples $\langle p, A, \psi
\rangle$ such that $p \in \bbb{P}$ and $p \leq {p}_{0}$, $A \subseteq
{\left[ {\omega}^{2} \right]}^{< \omega}$, and $\psi: A \rightarrow
\omega$.  Define $\chi: \bbb{P} \rightarrow \Pset(\omega)$ by $\chi(p)
= \left\{k \in \omega: \exists q \leq p \left[q \forces k \in
    \dot{\phi}(p)\right]\right\}$.  Observe that if $q \leq p$, then
$q \forces p \in \dot{\GG}$, and hence $q \forces \dot{\phi}(p) \
\text{is defined}$.  Next, it is easy to check that $\chi$ is
monotone.  Moreover, $p \forces \dot{\phi}(p) \neq 0$.  Therefore, for
some $q \leq p$ and $k \in \omega$, $q \forces k \in \dot{\phi}(p)$,
whence $k \in \chi(p)$.  Thus $\chi$ is monotone and non-zero.  Now
build a sequence $\langle {q}_{\alpha}: \alpha < {\c}^{\V} \rangle$
with the following properties:
\begin{enumerate}
\item[(3)] ${q}_{\alpha} \in \bbb{P}$, ${q}_{\alpha}$ is standard, and
  ${q}_{\alpha} \subseteq {p}_{\alpha}$.
\item[(4)] either ${A}_{\alpha} \cap {\left[{q}_{\alpha}\right]}^{<
    \omega} = 0$ or for some $s \in {A}_{\alpha} \cap
  {\left[{q}_{\alpha}\right]}^{< \omega}$, ${\psi}_{\alpha}(s) \notin
  \chi({q}_{\alpha})$.
\end{enumerate} 
To see how to build such a sequence, fix $\alpha < {\c}^{\V}$.  Let
$\GG$ be $(\V, \bbb{P})$-generic with ${p}_{\alpha} \in \GG$.  Since
${p}_{\alpha} \leq {p}_{0}$, in $\V\left[\GG\right]$ either there is
$a \in \GG$ such that ${A}_{\alpha} \cap {\left[a\right]}^{< \omega} =
0$ or there is $a \in \GG$ such that for all $b \in \GG \cap
{\left[a\right]}^{\omega}$, there exists $s \in {A}_{\alpha} \cap
{\left[b\right]}^{< \omega}$ such that ${\psi}_{\alpha}(s) \notin
\dot{\phi}\left[\GG\right](b)$.  Suppose that the first case happens.
Let ${q}_{\alpha}$ be a standard element of $\bbb{P}$ such that
${q}_{\alpha} \subseteq {p}_{\alpha} \cap a$.  Then
${\left[{q}_{\alpha}\right]}^{< \omega} \cap {A}_{\alpha} \subseteq
{\left[a\right]}^{< \omega} \cap {A}_{\alpha} = 0$.

Now suppose that the second case happens in $\V\left[\GG\right]$.
Working in $\V\left[\GG\right]$ fix $a \in \GG$ as in the second case.
Let $b \in \GG$ be standard such that $b \subseteq {p}_{\alpha} \cap
a$.  Since $b \in \GG \cap {\left[a\right]}^{\omega}$ there is $s \in
{A}_{\alpha} \cap {\left[b\right]}^{< \omega}$ such that
${\psi}_{\alpha}(s) \notin \dot{\phi}\left[\GG\right](b)$.  Find
${q}^{\ast} \in \GG$ such that (in $\V$) ${q}^{\ast} \forces
{\psi}_{\alpha}(s) \notin \dot{\phi}(b)$.  Let $q \in \GG$ be standard
so that $q \subseteq b \cap {q}^{\ast}$.  Back in $\V$, define
${q}_{\alpha}$ as follows.  For $n \in {\pi}_{1}(s)$, put
${q}_{\alpha}(n) = b(n)$.  If $n \in \omega - {\pi}_{1}(s)$, then
${q}_{\alpha}(n) = q(n)$.  Note that ${q}_{\alpha} \in \bbb{P}$, it is
standard, and $s \subseteq {q}_{\alpha} \subseteq b \subseteq
{p}_{\alpha}$.  Moreover, if $\langle n, m \rangle \in {q}_{\alpha} -
q$, then $n \in {\pi}_{1}(s)$.  As ${\pi}_{1}(s)$ is finite,
${q}_{\alpha} - q \in {\left( {\F}^{\otimes2}\right)}^{\ast}$.
Therefore, ${q}_{\alpha} \leq q$ and ${q}_{\alpha} \forces
{\psi}_{\alpha}(s) \notin \dot{\phi}(b)$.  Note that $s \in
{A}_{\alpha} \cap {\left[{q}_{\alpha}\right]}^{< \omega}$.  To see
that ${\psi}_{\alpha}(s) \notin \chi({q}_{\alpha})$, suppose for a
contradiction that there is $r \leq {q}_{\alpha}$ such that $r \forces
{\psi}_{\alpha}(s) \in \dot{\phi}({q}_{\alpha})$.  As ${q}_{\alpha}
\subseteq b$, $r \forces {\psi}_{\alpha}(s) \in \dot{\phi}(b)$, which
is impossible.  This completes the construction of ${q}_{\alpha}$.

We wish to apply Lemma \ref{lem:findetermines} to a sum of selective
ultrafilters indexed by another selective ultrafilters.  These
selective ultrafilters are obtained generically as follows.  Let $\E$
be $(\V, \Pset(\omega) / \F)$-generic with ${\pi}_{1}({p}_{0}) \in
\E$.  In $\V\left[\E\right]$ consider the poset $\bbb{Q} = {\left(
    \Pset(\omega) / \F \right)}^{\omega}$.  Define ${x}_{0} \in
\bbb{Q}$ as follows.  For any $n \in {\pi}_{1}({p}_{0})$, ${x}_{0}(n)
= {p}_{0}(n)$.  For any $n \notin {\pi}_{1}({p}_{0})$, ${x}_{0}(n) =
\omega$.  Let $G$ be $(\V\left[\E\right], \bbb{Q})$-generic with
${x}_{0} \in G$.  In $\V[\E][G]$ define for each $n \in \omega$,
${\VV}_{n} = \{x(n): x \in G\}$.  It is clear that $\E$ and the
${\VV}_{n}$'s are selective ultrafilters in
$\V\left[\E\right]\left[G\right]$.  Put $\VV = \scr E\sm_n\scr V_n$.
Then by Lemma \ref{lem:bv} $\VV$ is basically generated by
${\BB}_{\VV}$ and $\forall {b}_{0}, {b}_{1} \in {\BB}_{\VV} \left[\;
  {b}_{0} \cap {b}_{1} \in {\BB}_{\VV} \right]$.  Note that
${\BB}_{\VV} \subseteq \VV \subseteq \bbb{P}$.  Put $\phi = \chi
\restrict {\BB}_{\VV}$.  The hypotheses of Lemma
\ref{lem:findetermines} are satisfied.  Put $A = \{s \in
{\left[{\omega}^{2}\right]}^{< \omega}: {\psi}_{\phi}(s) \neq 0 \}$.
Define $\psi: A \rightarrow \omega$ by $\psi(s) =
\min({\psi}_{\phi}(s))$ for any $s \in A$.  We claim that there exists
$\alpha < {\c}^{\V}$ such that ${q}_{\alpha} \in {\BB}_{\VV}$ and
${A}_{\alpha} = A$ and ${\psi}_{\alpha} = \psi$.  Suppose for a moment
that this claim is true.  Applying Lemma \ref{lem:findetermines} to
${q}_{\alpha}$ find $s \in {\left[{q}_{\alpha}\right]}^{< \omega}$
such that ${\psi}_{\phi}(s) \neq 0$.  So $s \in {A}_{\alpha} \cap
{\left[{q}_{\alpha}\right]}^{< \omega}$.  Moreover, by the definition
of ${\psi}_{\phi}$, for any $t \in {A}_{\alpha} \cap
{\left[{q}_{\alpha}\right]}^{< \omega}$, ${\psi}_{\phi}(t) \subseteq
\phi({q}_{\alpha}) = \chi({q}_{\alpha})$.  This means that for every
$t \in {A}_{\alpha} \cap {\left[{q}_{\alpha}\right]}^{< \omega}$,
${\psi}_{\alpha}(t) \in \chi({q}_{\alpha})$.  But this contradicts the
way ${q}_{\alpha}$ was constructed.

To prove the claim first note that $A$ and $\psi$ are in $\V$.  In
$\V\left[\E\right]$ define $D(A, \psi)$ to be the collection of all $y
\in \bbb{Q}$ such that
\begin{align*}
  \exists a \in \E \exists \alpha < {\c}^{\V}
  \left[{\pi}_{1}({q}_{\alpha}) = a, y \restrict a = {q}_{\alpha},
    {A}_{\alpha} = A, \ \text{and} \ {\psi}_{\alpha} = \psi \right].
\end{align*}  
We argue that $D(A, \psi)$ is dense below ${x}_{0}$.  Fix $x \in
\bbb{Q}$ with $x \leq {x}_{0}$.  Note that $x \in \V$.  Working in
$\V$ define $D(x, A, \psi) = \{{\pi}_{1}({q}_{\alpha}): \alpha <
{c}^{\V}, {q}_{\alpha} \subseteq x \restrict \omega, {A}_{\alpha} = A,
\ \text{and} \ {\psi}_{\alpha} = \psi \}$.  To see that $D(x, A,
\psi)$ is dense below ${\pi}_{1}({p}_{0})$ fix $a \in
{\left[{\pi}_{1}({p}_{0})\right]}^{\omega}$.  Put $p = x \restrict a$
and note that $p \in \bbb{P}$ and that $p \leq {p}_{0}$.  Therefore,
there exists $\alpha < {\c}^{\V}$ such that ${p}_{\alpha} = p$,
${A}_{\alpha} = A$, and ${\psi}_{\alpha} = \psi$.  Thus ${q}_{\alpha}
\subseteq x \restrict a \subseteq x \restrict \omega$.  Also
${\pi}_{1}({q}_{\alpha}) \subseteq a$.  Therefore
${\pi}_{1}({q}_{\alpha})$ is as needed.  Back in $\V\left[\E\right]$,
fix $a \in \E$ and $\alpha < {\c}^{\V}$ such that
${\pi}_{1}({q}_{\alpha}) = a$, ${q}_{\alpha} \subseteq x \restrict
\omega$, ${A}_{\alpha} = A$, and ${\psi}_{\alpha} = \psi$.  For $n \in
a$, put $y(n) = {q}_{\alpha}(n)$.  For $n \in \omega - a$, put $y(n) =
x(n)$.  Then $y \in \bbb{Q}$ and $y \leq x$.  It is clear that $y \in
\bbb{Q}$ and that $y \leq x$.  Also $y \restrict a = {q}_{\alpha}$ and
so it is clear that $y$ is as needed.  So in
$\V\left[\E\right]\left[G\right]$, there is $y \in G$, $a \in \E$, and
$\alpha < {\c}^{\V}$ such that ${\pi}_{1}({q}_{\alpha}) = a$, $y
\restrict a = {q}_{\alpha}$, ${A}_{\alpha} = A$, and ${\psi}_{\alpha}
= \psi$.  Since ${\pi}_{1}({q}_{\alpha}) = a \in \E$ and for all $n
\in {\pi}_{1}({q}_{\alpha})$, ${q}_{\alpha}(n) = y(n) \in {\VV}_{n}$,
${q}_{\alpha} \in {\BB}_{\VV}$, and we are done.
\end{proof}
Now we show that the conclusion of Theorem 17 of \cite{tukey}, which
was proved there to hold for all ultrafilters that are basically
generated by a base that is closed under finite intersections, also
holds for the generic ultrafilter $\dot{\GG}$.
\begin{df} \label{def:UP} Let $\U$ be an ultrafilter on
  ${\omega}^{2}$, and let $P \subseteq {\left[{\omega}^{2}\right]}^{<
    \omega} - \{0\}$.  We define ${\U}(P) = \{A \subseteq P: \exists a
  \in \U\left[P \cap {\left[a\right]}^{< \omega} \subseteq A
  \right]\}$.
\end{df}   
If $\forall a \in \U\left[\lc P \cap {\left[a \right]}^{< \omega} \rc
  = \omega\right]$, then ${\U}(P)$ is a proper, non-principal filter
on $P$.  The following theorem says that any Tukey reduction from
$\dot{\GG}$ is given by a Rudin-Keisler reduction from $\dot{\GG}(P)$
for some $P$.
\begin{thm} \label{thm:canonical} Let $\GG$ be $(\V,
  \bbb{P})$-generic.  In $\V\left[\GG\right]$, let $\VV$ be an
  arbitrary ultrafilter so that $\VV \; {\leq}_{T} \; \GG$.  Then
  there is $P \subseteq {\left[{\omega}^{2}\right]}^{< \omega} -
  \{0\}$ such that
  \begin{enumerate}
    \item
    $\forall t, s \in P \left[t \subseteq s \implies t = s \right]$		
    \item
    $\GG(P) \; {\equiv}_{T} \; \GG$
    \item
    $\VV \; {\leq}_{RK} \; \GG(P)$
  \end{enumerate}
\end{thm}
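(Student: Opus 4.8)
The plan is to follow the proof of Theorem~17 of \cite{tukey}, reading off $P$ and the Rudin--Keisler reduction directly from the canonization in Theorem~\ref{thm:nicemaps}. Everything takes place in $\V[\GG]$. We may assume $\VV$ is an ultrafilter on $\omega$, and moreover non-principal: if $\VV$ is principal, take $P$ to be the set of all singletons of $\omega^2$, so that $\GG(P)$ is canonically isomorphic to $\GG$ and (1)--(3) are immediate. So assume $\VV$ is non-principal. Since $\VV\leq_T\GG$, Fact~6 of \cite{dt} gives a monotone convergent map $\phi\colon\GG\to\VV$; as $\VV$ is non-principal each $\phi(a)$ is infinite, so $\phi$ is non-zero, and by convergence $\phi[\GG]$ is a base for $\VV$, i.e.\ $\forall v\in\VV\,\exists a\in\GG\,[\phi(a)\subseteq v]$. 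First I would apply Theorem~\ref{thm:nicemaps} to $\phi$, obtaining $P_0\subseteq[\omega^2]^{<\omega}$ and $\psi_0\colon P_0\to\omega$ with properties (1) and (2). Note that $\emptyset\notin P_0$: otherwise, for each $v\in\VV$ choose $a$ with $\phi(a)\subseteq v$ and then $b$ as in (2) for this $a$; since $\emptyset\in P_0\cap[b]^{<\omega}$, property (2) gives $\psi_0(\emptyset)\in\phi(b)\subseteq v$, and as $v$ ranges over $\VV$ this forces $\psi_0(\emptyset)\in\bigcap\VV=\emptyset$, absurd. I would then let $P$ be the set of $\subseteq$-minimal elements of $P_0$ (which exist since $P_0$ consists of finite sets). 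Then $P$ is an antichain with $\emptyset\notin P$, so conclusion (1) holds, and with $\psi:=\psi_0\restrict P$ the pair $P,\psi$ still satisfies (1) and (2) of Theorem~\ref{thm:nicemaps}: property (1) because any $s\in P_0\cap[a]^{<\omega}$ contains a minimal $t\subseteq s\subseteq a$ in $P$, and property (2) because restricting to the smaller set $P$ only weakens a universal statement. Finally, property (1) for $P$ makes $\GG(P)$ a proper filter on $P$ --- hence a directed set under $\supseteq$ --- so conclusion (2) is a meaningful assertion.

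Next I would verify conclusion (3): that $\psi\colon P\to\omega$ witnesses $\VV\leq_{RK}\GG(P)$, i.e.\ $v\in\VV\iff\psi^{-1}(v)\in\GG(P)$. If $v\in\VV$, pick $a_0$ with $\phi(a_0)\subseteq v$ and then $b\in\GG\cap[a_0]^{\omega}$ as in (2); since $b\subseteq a_0$ and $\phi$ is monotone, $\psi(s)\in\phi(b)\subseteq\phi(a_0)\subseteq v$ for all $s\in P\cap[b]^{<\omega}$, so $P\cap[b]^{<\omega}\subseteq\psi^{-1}(v)$ with $b\in\GG$, whence $\psi^{-1}(v)\in\GG(P)$. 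Conversely, if $\psi^{-1}(v)\in\GG(P)$ but $v\notin\VV$, then $\omega\setminus v\in\VV$, so by the forward direction $\psi^{-1}(\omega\setminus v)\in\GG(P)$ too; intersecting the two witnessing members of $\GG$ and using (1) to find $s\in P$ inside that intersection yields $\psi(s)\in v$ and $\psi(s)\notin v$, a contradiction.

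Then I would establish conclusion (2). The inequality $\GG(P)\leq_T\GG$ is witnessed by the monotone map $e\colon\GG\to\GG(P)$, $e(a)=P\cap[a]^{<\omega}$, which is cofinal in $\GG(P)$ by the definition of $\GG(P)$, hence convergent. For $\GG\leq_T\GG(P)$ I would use $h\colon\GG(P)\to\Pset(\omega^2)$, $h(A)=\bigcup A$: it is monotone, and cofinal in $\GG$ since $h(P\cap[c]^{<\omega})=\bigcup(P\cap[c]^{<\omega})\subseteq c$ for $c\in\GG$. The hard part --- though still short --- is seeing that $h$ actually lands in $\GG$. Given $A\in\GG(P)$, fix $a\in\GG$ with $P\cap[a]^{<\omega}\subseteq A$; it suffices that $\bigcup(P\cap[a]^{<\omega})\in\GG$. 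If not, then $a^{\circ}:=a\setminus\bigcup(P\cap[a]^{<\omega})\in\GG$ (as $\GG$ is an ultrafilter containing $a$); but every $s\in P\cap[a^{\circ}]^{<\omega}$ lies in $P\cap[a]^{<\omega}$, hence inside $\bigcup(P\cap[a]^{<\omega})$, which is disjoint from $a^{\circ}$, forcing $s=\emptyset\in P$ --- impossible --- so $P\cap[a^{\circ}]^{<\omega}=\emptyset$, contradicting (1). Hence $h$ is monotone and cofinal into $\GG$, so convergent, giving $\GG\leq_T\GG(P)$ and thus $\GG(P)\equiv_T\GG$.

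That completes the proof. The argument is essentially formal once Theorem~\ref{thm:nicemaps} is available --- it mirrors exactly the passage from the canonization of Section~4 of \cite{tukey} to its Theorem~17 --- and the only genuinely new computation is the last one, that $\bigcup(P\cap[a]^{<\omega})\in\GG$. I expect the main practical difficulty to be bookkeeping: keeping $P$ an antichain, keeping $\emptyset$ out of $P$ so that $\GG(P)$ behaves like $\GG$, and correctly matching up which monotone cofinal (hence convergent) maps witness the two Tukey inequalities.
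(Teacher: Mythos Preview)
Your proof is correct and follows essentially the same route as the paper's: apply Theorem~\ref{thm:nicemaps}, show the empty set is not in the resulting family, pass to the $\subseteq$-minimal elements to obtain $P$, verify $\bigcup(P\cap[a]^{<\omega})\in\GG$, and read off (1)--(3). The only cosmetic differences are that the paper proves $\GG\leq_T\GG(P)$ by showing the single map $a\mapsto P\cap[a]^{<\omega}$ is simultaneously convergent and Tukey (rather than introducing your second map $h$), and it checks both directions of the RK biconditional directly rather than reducing one to the other via complementation; your explicit handling of the principal case is a point the paper glosses over.
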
  
\begin{proof}
  The proof is almost the same as the proof of Theorem 17 of
  \cite{tukey}.  Work in $\V\left[\GG\right]$.  Fix an ultrafilter
  $\VV$ and a map $\phi: \GG \rightarrow \VV$ which is monotone and
  cofinal in $\VV$.  Since $\phi$ is monotone and non-zero, fix $A
  \subseteq {\left[{\omega}^{2}\right]}^{< \omega}$ and $\psi: A
  \rightarrow \omega$ as in Theorem \ref{thm:nicemaps}.  First we
  claim that $0 \notin A$.  Indeed suppose for a contradiction that $0 \in A$ and let $k = \psi(0)$.  Let $e \in \VV$ be such that $k
  \notin e$ and let $a \in \GG$ be such that $\phi(a) \subseteq e$.
  By (2) of Theorem \ref{thm:nicemaps} there is $b \in \GG \cap
  {\left[a\right]}^{\omega}$ such that for all $s \in A \cap
  {\left[b\right]}^{< \omega}$, $\psi(s) \in \phi(b)$.  However, $0
  \in A \cap {\left[b\right]}^{< \omega}$, and so $k = \psi(0) \in
  \phi(b) \subseteq \phi(a) \subseteq e$, a contradiction.  Thus $0
  \notin A$.  Define
\begin{align*}
  P = \{s \in A: s \ \text{is minimal in} \ A \ \text{with respect to}
  \ \subseteq \}.
\end{align*}
It is clear that $P \subseteq {\left[{\omega}^{2}\right]}^{< \omega} -
\{0\}$ and that $P$ satisfies (1) by definition.

Next, for any $a \in \GG$, ${\bigcup}{\left(P \cap {\left[a\right]}^{< \omega}\right)} \in \GG$.
To see this, fix $a \in \GG$, and suppose that $a - \left({\bigcup}{\left(P \cap {\left[a\right]}^{< \omega}\right)}\right) \in \GG$.  By (1) of Theorem \ref{thm:nicemaps}, fix $s \in A$ with $s \subseteq a - \left({\bigcup}{\left(P \cap {\left[a\right]}^{< \omega}\right)}\right)$.
However there is $t \in P$ with $t \subseteq s$, whence $t = 0$, an impossibility.
It follows from this that for each $a \in \GG$, $P \cap {\left[a\right]}^{< \omega}$ is infinite.

Next, verify that $\GG(P) \; {\equiv}_{T} \; \GG$.
Define $\chi: \GG \rightarrow \GG(P)$ by $\chi(a) = P \cap {\left[a\right]}^{< \omega}$, for each $a \in \GG$.  This map is clearly monotone and cofinal in $\GG(P)$.  So $\chi$ is a convergent map.  On the other hand, $\chi$ is also Tukey.
To see this, fix $\XX \subseteq \GG$, unbounded in $\GG$.
Assume that $\{\chi(a): a \in \XX\}$ is bounded in $\GG(P)$.
So there is $b \in \GG$ such that $P \cap {\left[b\right]}^{< \omega}
\subseteq P \cap {\left[a\right]}^{< \omega}$ for each $a \in \XX$.
However $c = {\bigcup}{\left(P \cap {\left[b\right]}^{< \omega}\right)} \in \GG$.
Now, it is clear that $c \subseteq a$, for each $a \in \XX$, a contradiction.

Next, check that $\VV \; {\leq}_{RK} \; \GG(P)$.  Define $f: P
\rightarrow \omega$ by $f = \psi \restrict P$.  Fix $e \subseteq
\omega$, and suppose first that ${f}^{-1}(e) \in \GG(P)$.  Fix $a \in
\GG$ with $P \cap {\left[a\right]}^{< \omega} \subseteq {f}^{-1}(e)$.
If $e \notin \VV$, then $\omega - e \in \VV$, and there exists $c \in
\GG$ with $\phi(c) \subseteq \omega - e$.  By (2) of Theorem
\ref{thm:nicemaps} fix $b \in \GG \cap {\left[a \cap
    c\right]}^{\omega}$ such that for all $s \in A \cap
{\left[b\right]}^{< \omega}$, $\psi(s) \in \phi(b)$.  By (1) of
Theorem \ref{thm:nicemaps}, fix $s \in A \cap {\left[b\right]}^{<
  \omega}$.  Fix $t \subseteq s$ with $t \in P$.  Let $k = f(t) =
\psi(t)$.  As $t \subseteq s \subseteq b \subseteq a$, $t \in P \cap
{\left[a\right]}^{< \omega} \subseteq {f}^{-1}(e)$.  Thus $k \in e$.
On the other hand, since $t \in A \cap {\left[b\right]}^{< \omega}$,
$\psi(t) \in \phi(b)$.  So $k \in \phi(b) \subseteq \phi(c) \subseteq
\omega - e$, a contradiction.

Next, suppose that $e \in \VV$.  By cofinality of $\phi$, there is $a
\in \GG$ such that $\phi(a) \subseteq e$.  Applying (2) of Theorem
\ref{thm:nicemaps}, fix $b \in \GG \cap {\left[a\right]}^{\omega}$
such that for all $s \in A \cap {\left[b\right]}^{< \omega}$, $\psi(s)
\in \phi(b)$.  Now, if $s \in P \cap {\left[b\right]}^{< \omega}$,
then $f(s) = \psi(s) \in \phi(b) \subseteq \phi(a) \subseteq e$.
Therefore, $P \cap {\left[b\right]}^{< \omega} \subseteq {f}^{-1}(e)$,
whence ${f}^{-1}(e) \in \GG(P)$.
\end{proof}
An immediate corollary of Theorem \ref{thm:canonical} is that if $\GG$
is $(\V, \bbb{P})$-generic, then in $\V\left[\GG\right]$, $\{\VV: \VV
\ \text{is an ultrafilter on} \ \omega \ \text{and} \ \VV \;
{\leq}_{T} \; \GG\}$ has size $\c$.  This is because there are only
$\c$ many sets $P \subseteq {\left[ {\omega}^{2} \right]}^{< \omega} -
\{0\}$, and for each such $P$ there are only $\c$ many ultrafilters
that are RK below $\GG(P)$.

\section{Generic Ultrafilters are not Basically
  Generated} \label{sec:notbasicallygenerated} 

In this section, we show that the generic ultrafilter is not basically
generated.  This is the first consistent example of an ultrafilter
that is not basically generated and whose cofinal type is not maximal.
Thus our result establishes the consistency of the statement
\begin{align*}
  ``\exists \U\left[\U \ \text{is not basically generated} \
    \text{and} \ {\left[{\omega}_{1}\right]}^{< \omega} \;
    {\not\leq}_{T} \; \U\right]''.
\end{align*}

Cardinal invariants of the Boolean algebra $\Pset({\omega}^{2}) /
{\F}^{\otimes2}$ were considered in \cite{tisomega1} and
\cite{hernandez}.  Recall that for a Boolean algebra $\BB$,
$\mathfrak{t}(\BB)$ is the least $\kappa$ such that there is a
sequence $\langle {b}_{\alpha}: \alpha < \kappa \rangle$ of elements
of $\BB \setminus \{0\}$ such that for all $\alpha < \beta <
\kappa\left[{b}_{\alpha} \geq {b}_{\beta}\right]$ and there does not
exist $b \in \BB \setminus \{0\}$ such that $\forall \alpha < \kappa
\left[b \leq {b}_{\alpha}\right]$.  $\mathfrak{h}(\BB)$ is the
distributivity number of $\BB$.  Szyma\'{n}ski and Zhou showed in
\cite{tisomega1} that $\mathfrak{t}\left(\Pset({\omega}^{2}) /
  {\F}^{\otimes2}\right)$ is provably equal to ${\omega}_{1}$ in
$\ZFC$.  Hern{\'a}ndez-Hern{\'a}ndez showed in \cite{hernandez} that
it is consistent to have $\mathfrak{h}\left(\Pset({\omega}^{2}) /
  {\F}^{\otimes2}\right) < \mathfrak{h}\left(\Pset(\omega) \slash
  \Fin\right)$.  Our proof of Theorem \ref{thm:notbasicallygenerated}
is partly inspired by these well-known facts, though our construction
is different.

For ease of notation, throughout this section, we use $\I$ to denote
the dual ideal to ${\F}^{\otimes2}$.  In other words, $\I = {\left(
    {\F}^{\otimes2} \right)}^{\ast}$.  Also, let $a \in \cube$.  If
$\A \subseteq \Pset(a)$, $\I(\A)$ denotes the ideal on $a$ generated
by $\A$ together with the Fr\'echet ideal on $a$.
\begin{thm} \label{thm:notbasicallygenerated}
$\forces {\dot{\GG} \ \text{is not basically generated}}$.
\end{thm}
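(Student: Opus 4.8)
The plan is to argue by contradiction inside the forcing, in the style of the proof of Theorem~\ref{thm:nicemaps}. Suppose $\forces\dot{\mathcal G}$ is not basically generated fails; then, by the maximal principle, there are a condition $p_0\in\bP$ and a name $\dot{\mathcal B}$ such that $p_0\forces$``$\dot{\mathcal B}$ is a base for $\dot{\mathcal G}$ witnessing Definition~\ref{df:basicallygenerated}''. By Lemma~\ref{la:noreals} the forcing is $\sigma$-closed and adds no reals, so $\cube$, and more generally every potential sequence $\langle b_n:n\in\om\rangle\sse\Pset(\om^2)$ together with a potential limit $b$, already lives in $V$. Consequently it suffices to manufacture, below $p_0$, a condition $q$ and $\bP$-names $\dot b,\langle\dot b_n:n\in\om\rangle$ such that $q$ forces: $\dot b,\dot b_n\in\dot{\mathcal B}$ for all $n$; $\dot b_n\to\dot b$ in the Cantor topology on $2^{\om^2}$; and $\bigcap_{n\in Y}\dot b_n\notin\dot{\mathcal G}$ for every infinite $Y\sse\om$ (all from $V$). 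Since $\dot{\mathcal G}\contains\F^{\otimes2}$ by Lemma~\ref{force-in-G}, the last clause is in particular guaranteed once one forces each such intersection to have only finitely many infinite columns.

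The condition and the names will be produced by a fusion, of the kind used in the proofs of Theorems~\ref{thm.canon} and \ref{thm.notaboveom_1}: working always with standard conditions, one builds a $\sse$-decreasing sequence $q_0\ge q_1\ge\cdots$ and an increasing sequence of ``marked'' columns $i_0<i_1<\cdots$ with $i_k\in\pi_1(q_k)$, and takes $q$ to be the diagonal through the $q_k$ along the marked columns; simultaneously one commits, at stage $n$, the name $\dot b_n$ and its behaviour on $i_0,\dots,i_n$. Convergence $\dot b_n\to\dot b$ is arranged cheaply, by forcing $\dot b_n$ to be supported on the marked columns and to agree there with $\dot b$ on $i_0,\dots,i_n$, so that every coordinate is eventually frozen; here we use only that $\dot{\mathcal B}$ is a base, to locate members of $\dot{\mathcal B}$ below prescribed conditions realizing these agreements up to inclusion. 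The delicate requirement is the failure of basic generation: no matter which infinitely many of the $\dot b_n$ are intersected, the result must leave $\dot{\mathcal G}$, and---crucially---one cannot achieve this by any ``columnwise'' scheme. Indeed, convergence forces the modification suffered by each fixed column $i_k$ to occur for only finitely many $n$, so an infinite sub-intersection, restricted to $i_k$, is governed by a \emph{finite} intersection of the committed sections there; making every such finite intersection finite (as membership in $\I$ on that column would demand) would already force the \emph{finite} sub-intersections of the $\dot b_n$ out of $\dot{\mathcal G}$, contradicting that $\dot{\mathcal G}$ is a filter. Thus the shrinking of distinct columns must be \emph{coupled}.

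The coupling is exactly what is available because $\dot{\mathcal G}$ is the generic ultrafilter of the \emph{Fubini square} $\Pset(\om^2)/\F^{\otimes2}$ and not the generic sum $\scr E\sm_n\scr V_n$ of Lemma~\ref{lem:bv}: a base $\dot{\mathcal B}$ for $\dot{\mathcal G}$ admits no columnwise description of the form ``$b(i)\in\scr V_i$'', so at each stage of the fusion, for every finite pattern on finitely many marked columns and every $q$ already built, density yields a member of $\dot{\mathcal B}$ below $q$ fitting that pattern up to the required inclusions. This is the point at which the combinatorics of the Fr\'echet ideal enter, through the auxiliary ideals $\I(\A)$ on a single column introduced just above: at a marked column $i_k$, having committed the $i_k$-sections of $\dot b_n$ for $n<k$ to lie below some $e_k\in\cube$ that the construction has forced into $\dot{\mathcal G}$, one applies Lemma~\ref{indep} to $e_k$ to pick an independent system of two-piece partitions and distributes the ``damage'' inflicted by the various $\dot b_n$ across the marked columns so that, for every finite $F$, the family of sections over $F$ still generates a proper ideal $\I(\A)$ at all but finitely many marked columns---leaving room to keep refining, this being the analogue for $\bP$ of the elementary fact that $\mathfrak t(\bP)$ is uncountable (used here via $\sigma$-closure together with a cardinality count of possible sections)---while, for every infinite $Y$, the damage overlaps along cofinitely many marked columns enough that $\bigcap_{n\in Y}\dot b_n$ is forced to have only finitely many infinite columns, also invoking that its first projection is forced into $\pi_1(\dot{\mathcal G})=\scr U$, selective (hence a P-point) by Proposition~\ref{p1-gen}. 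The main obstacle is to carry out this book-keeping so that a \emph{single} $q$ defeats \emph{every} infinite $Y\in V$ at once---rather than one $Y$ at a time---which is where a transfinite enumeration of all relevant ground-model objects, in the manner of the proof of Theorem~\ref{thm:nicemaps}, has to be woven into the fusion. Once this is done, $q$ forces the desired counterexample to Definition~\ref{df:basicallygenerated}, and since $p_0$ was arbitrary we conclude $\forces\dot{\mathcal G}$ is not basically generated.
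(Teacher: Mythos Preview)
Your proposal is a plan rather than a proof, and the step you yourself flag as ``the main obstacle''---arranging that a \emph{single} condition $q$ defeats \emph{every} infinite $Y\subseteq\omega$ simultaneously---is never carried out. You invoke Lemma~\ref{indep} and a ``transfinite enumeration in the manner of Theorem~\ref{thm:nicemaps}'', but neither is made to do concrete work: Lemma~\ref{indep} produces an infinite set almost disjoint from any selection of pieces, which points in the wrong direction for what you need on a column, and the enumeration in Theorem~\ref{thm:nicemaps} has length $\mathfrak c$ and builds a family $\langle q_\alpha\rangle$ tailored to a different purpose; it is not something one can ``weave into'' an $\omega$-length fusion producing one diagonal condition. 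The appeals to selectivity of $\pi_1(\dot{\GG})$ and to $\mathfrak t(\bP)$ are also idle here. More seriously, your scheme of making $b_n$ agree with $b$ on columns $i_0,\dots,i_n$ and ``damaging'' later columns runs into exactly the obstruction you diagnose: on column $i_k$ the intersection $\bigcap_{n\in Y}b_n$ is governed by the finitely many $n\in Y\cap k$, and you must arrange that for every infinite $Y$ these finite intersections are cofinally (in $k$) finite, while for every finite $F$ enough room remains to continue the fusion. Nothing in the proposal shows how to achieve this coupling.

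The paper takes a different and cleaner route: it does not attempt to build the witnessing convergent $\omega$-sequence directly. Instead it constructs, by recursion of length $\omega_1$, sequences $\langle x_\alpha\rangle_{\alpha<\omega_1}$ and $\langle p_\alpha\rangle_{\alpha<\omega_1}$ with $p_\alpha\subseteq x_\alpha$, $p_\alpha\forces x_\alpha\in\dot{\BB}$, $x_\alpha\le p_\xi$ for all $\xi<\alpha$, and the key clause~(5): for each column $n$, the set $\{\xi<\alpha:|(x_\alpha\cap x_\xi)(n)|=\omega\}$ is finite. This is maintained columnwise by choosing, at each marked column, an infinite subset that meets each earlier non-almost-containing $x_\xi$ only finitely while remaining co-infinite in the complement of the relevant countably generated ideal (conditions (7)--(8)), so that future stages still have room. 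Once the $\omega_1$-sequence exists, second countability of $\Pset(\omega^2)\times\Pset(\omega^2)$ yields $\delta<\omega_1$ and $\alpha_0<\alpha_1<\cdots<\delta$ with $\langle x_{\alpha_i},p_{\alpha_i}\rangle\to\langle x_\delta,p_\delta\rangle$. Then $p_\delta$ forces all of these into $\dot{\BB}$, and basic generation would produce an infinite $X$ and a standard $q\le p_\delta\subseteq x_\delta$ with $q\subseteq x_{\alpha_i}$ for all $i\in X$; any column of $q$ then violates clause~(5) at stage $\delta$. The point is that the $\omega_1$-length construction replaces your need to anticipate all infinite $Y$ in advance by the single local requirement~(5), and a compactness argument extracts the convergent sequence afterward. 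If you want to salvage your approach, this is the missing idea.
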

\begin{proof}
Let $\dot{\BB} \in \VP$ be such that
\begin{enumerate}
  \item
  $\forces \dot{\BB} \subseteq \dot{\GG}$
  \item
  $\forces \forall a \in \dot{\GG} \exists b \in \dot{\BB}\left[b
    \subseteq a \right]$ 
\end{enumerate}
Let ${p}^{\ast} \in \bbb{P}$ be standard such that
\begin{align*}
  {p}^{\ast} \forces ``&\text{every convergent sequence from} \
  \dot{\BB}  \\ & \text{contains an infinite sub-sequence bounded in}
  \ \dot{\GG}'' 
\end{align*}
Now build two sequences $\{{p}_{\alpha}: \alpha < {\omega}_{1}\}$ and
$\{{x}_{\alpha}: \alpha < {\omega}_{1}\}$ with the following
properties.
\begin{enumerate}
\item[(3)] ${p}_{\alpha} \subseteq {p}^{\ast}$, both ${p}_{\alpha}$
  and ${x}_{\alpha}$ are elements of $\bbb{P}$, ${p}_{\alpha}$ is
  standard, ${p}_{\alpha} \subseteq {x}_{\alpha}$, and ${p}_{\alpha}
  \forces {x}_{\alpha} \in \dot{\BB}$.
\item[(4)] $\forall \xi < \alpha \left[{x}_{\alpha} \leq
    {p}_{\xi}\right]$ (therefore, $\forall \xi < \alpha
  \left[{p}_{\alpha} \leq {x}_{\alpha} \leq {p}_{\xi}\right]$).
\item[(5)] $\forall n \in \omega \left[\{\xi < \alpha: \lc \left(
      {x}_{\alpha} \cap {x}_{\xi} \right) (n) \rc = \omega \} \
    \text{is finite}\right]$.
\item[(6)] for each $\alpha < {\omega}_{1}$ and $n \in
  {\pi}_{1}({p}_{\alpha})$ let $F(\alpha, n) = \{\xi \leq \alpha:
  {p}_{\alpha}(n) \; {\subseteq}^{\ast} \; {x}_{\xi}(n)\}$.  Note that
  $\alpha \in F(\alpha, n)$.  Let $G(\alpha, n) = \{{p}_{\alpha}(n)
  \cap {x}_{\xi}(n): \xi \in {\omega}_{1} - F(\alpha, n)\}$.  Then
  $\I(G(\alpha, n))$ is a proper ideal on ${p}_{\alpha}(n)$ for each
  $\alpha < {\omega}_{1}$ and $n \in {\pi}_{1}({p}_{\alpha})$.
\end{enumerate}
Suppose for a moment that such sequences can be constructed.  Let
$\delta < {\omega}_{1}$ and $\{{\alpha}_{0} < {\alpha}_{1} < \dotsb \}
\subseteq \delta$ be such that $\{\langle {x}_{{\alpha}_{i}},
{p}_{{\alpha}_{i}} \rangle: i \in \omega\}$ converges to $\langle
{x}_{\delta}, {p}_{\delta} \rangle$ (with respect to the usual
topology on $\Pset(\omega) \times \Pset(\omega)$).  Note that for each
$i \in \omega$, ${p}_{\delta} \leq {p}_{{\alpha}_{i}}$.  Therefore
${p}_{\delta} \forces \{{x}_{{\alpha}_{i}}: i < \omega\} \cup
\{{x}_{\delta}\} \subseteq \dot{\BB}$.  Since ${p}_{\delta} \leq
{p}^{\ast}$ and since $\bbb{P}$ does not add any countable sets of
ordinals, there exist $X \in \cube$ and a standard $q \in \bbb{P}$
such that $q \subseteq {x}_{\delta}$ and $\forall i \in X \left[q
  \subseteq {x}_{{\alpha}_{i}}\right]$.  Fix $n \in {\pi}_{1}(q)$.
Then for each $i \in X$, $q(n) \subseteq {x}_{\delta}(n) \cap
{x}_{{\alpha}_{i}}(n)$.  So $\{{\alpha}_{i}: i \in X\} \subseteq
\{\alpha < \delta: \lc \left( {x}_{\delta} \cap {x}_{\alpha} \right)
(n) \rc = \omega \}$, contradicting (5).

To see how to build such sequences, note first that if $\delta \leq
{\omega}_{1}$ is a limit ordinal and if for each $\beta < \delta$ the
sequences $\langle {x}_{\alpha}: \alpha < \beta \rangle$ and $\langle
{p}_{\alpha}: \alpha < \beta \rangle$ do not contain any witnesses
violating clauses (3)-(6), then the sequences $\langle {x}_{\alpha}:
\alpha < \delta \rangle$ and $\langle {p}_{\alpha}: \alpha < \delta
\rangle$ do not contain any such witnesses either.  We may thus
concentrate on extending two such given sequences by one step.
Therefore, fix $\alpha < {\omega}_{1}$ and assume that $\langle
{x}_{\xi}: \xi < \alpha \rangle$ and $\langle {p}_{\xi}: \xi < \alpha
\rangle$ are given to us.  We only need to worry about finding
${x}_{\alpha}$ and ${p}_{\alpha}$.  First if $\alpha = 0$, then fix a
$(\V, \bbb{P})$-generic $\GG$ with ${p}^{\ast} \in \GG$.  In
$\V\left[\GG\right]$ fix ${x}_{0} \in \dot{\BB}\left[\GG\right]$ with
${x}_{0} \subseteq {p}^{\ast}$.  In $\V$, fix a standard ${p}_{0} \in
\bbb{P}$ such that ${p}_{0} \subseteq {x}_{0}$ and ${p}_{0} \forces
{x}_{0} \in \dot{\BB}$.  It is clear that (3) is satisfied, and
(4)-(6) are trivially true.  So assume $\alpha > 0$.  Let
$\{{\xi}_{n}: n \in \omega\}$ enumerate $\alpha$, possibly with
repetitions.  For each $n \in \omega$, let ${\zeta}_{n} =
\max\{{\xi}_{i}: i \leq n\}$.  Note that for each $i \leq n$,
${p}_{{\zeta}_{n}} \leq {p}_{{\xi}_{i}}$.  So it is possible to find a
sequence of elements of $\omega$, $\{{k}_{0} < {k}_{1} < \dotsb \}$, such that for each $n \in \omega$, ${k}_{n} \in
{\pi}_{1}({p}_{{\zeta}_{n}})$ and for each $i \leq n$,
${p}_{{\zeta}_{n}}({k}_{n}) \; {\subseteq}^{\ast} \;
{p}_{{\xi}_{i}}({k}_{n})$.  Define $p \subseteq \omega \times \omega$
as follows.  If $m \notin \{{k}_{0} < {k}_{1} < \dotsb\}$, then $p(m)
= 0$.  Suppose $m = {k}_{n}$.  Put $G({\zeta}_{n}, m, \alpha) =
\{{p}_{{\zeta}_{n}}(m) \cap {x}_{\xi}(m): \xi \in \alpha -
F({\zeta}_{n}, m)\}$.  By (6) $\I(G({\zeta}_{n}, m, \alpha))$ is a
proper ideal on ${p}_{{\zeta}_{n}}(m)$.  Since this ideal is countably
generated, it is possible to find $p(m) \in
{\left[{p}_{{\zeta}_{n}}(m)\right]}^{\omega}$ such that
\begin{enumerate}
\item[(7)] for all $a \in \I(G({\zeta}_{n}, m, \alpha))$, $\lc p(m)
  \cap a \rc < \omega$
\item[(8)] for all $a \in \I(G({\zeta}_{n}, m, \alpha))$, $\lc \left(
    \omega - a \right) \cap \left(\omega - p(m) \right)\rc = \omega$.
\end{enumerate}
Note that $p \in \bbb{P}$.  Furthermore, note that if $i \in \omega$,
then for any $n \geq i$, $p({k}_{n}) \subseteq
{p}_{{\zeta}_{n}}({k}_{n}) \; {\subseteq}^{\ast} \;
{p}_{{\xi}_{i}}({k}_{n})$.  Hence for all $\xi < \alpha$, $p \leq
{p}_{\xi}$.  Next, fix $m \in \omega$ and suppose that $(p \; \cap \;
{x}_{\xi})(m)$ is infinite for some $\xi < \alpha$.  Then $m =
{k}_{n}$ for some (unique) $n$ and $\xi \in F({\zeta}_{n}, m)$.
However $F({\zeta}_{n}, m)$ must be a finite set.  This is because if
$\xi \in F({\zeta}_{n}, m)$, then $\xi \leq {\zeta}_{n}$ and
${p}_{{\zeta}_{n}}(m) \; {\subseteq}^{\ast} \; {x}_{\xi}(m) \cap
{x}_{{\zeta}_{n}}(m)$, and so since $m \in
{\pi}_{1}({p}_{{\zeta}_{n}})$, $F({\zeta}_{n}, m) \subseteq
\{{\zeta}_{n}\} \cup \{\xi < {\zeta}_{n}: \lc \left( {x}_{{\zeta}_{n}}
  \cap {x}_{\xi} \right) (m) \rc = \omega\}$, which is a finite set.
So for any $m \in \omega$, $\{\xi < \alpha: \lc \left(p \cap
  {x}_{\xi}\right)(m)\rc = \omega\}$ is finite.  Finally, note that $p
\subseteq {p}^{\ast}$.

Let $\GG$ be $(\V, \bbb{P})$-generic with $p \in \GG$.  In
$\V\left[\GG\right]$, let ${x}_{\alpha} \in \dot{\BB}\left[\GG\right]$
with ${x}_{\alpha} \subseteq p$.  In $\V$, let ${p}_{\alpha} \in
\bbb{P}$ be standard such that ${p}_{\alpha} \subseteq {x}_{\alpha}
\subseteq p$ and ${p}_{\alpha} \forces {x}_{\alpha} \in \dot{\BB}$.
It is clear that (3)-(5) are satisfied by $\langle {x}_{\xi}: \xi \leq
\alpha \rangle$ and $\langle {p}_{\xi}: \xi \leq \alpha\rangle$.

We only need to check that (6) is satisfied.  There are several cases
to consider here.  First fix $m \in \omega$ and suppose that $m \in
{\pi}_{1}({p}_{\alpha})$.  As ${p}_{\alpha} \subseteq p$, $m =
{k}_{n}$ for some (unique) $n \in \omega$.  Now if $\xi < \alpha$ and
${p}_{\alpha}(m) \; {\subseteq}^{\ast} \; {x}_{\xi}(m)$, then $p(m)
\cap {x}_{\xi}(m)$ is infinite and so $\xi \in F({\zeta}_{n}, m)$.  On
the other hand if $\xi \in F({\zeta}_{n}, m)$, then ${p}_{\alpha}(m)
\subseteq p(m) \subseteq {p}_{{\zeta}_{n}}(m) \; {\subseteq}^{\ast} \;
{x}_{\xi}(m)$, whence $\xi \in F(\alpha, m)$.  Therefore, $F(\alpha,
m) = \{\alpha\} \cup F({\zeta}_{n}, m)$.  Put $G(\alpha, m, \alpha +
1) = \{{p}_{\alpha}(m) \cap {x}_{\xi}(m): \xi \in \left( \alpha + 1
\right) - F(\alpha, m)\}$.  By (7) it is clear that $\I(G(\alpha, m,
\alpha + 1))$ is the Frechet ideal on ${p}_{\alpha}(m)$.  This takes
care of $\alpha$.  Next, suppose $\xi < \alpha$ and $m \in
{\pi}_{1}({p}_{\xi})$.  Put $G(\xi, m, \alpha) = \{{p}_{\xi}(m) \cap
{x}_{\zeta}(m): \zeta \in \alpha - F(\xi, m)\}$ and put $G(\xi, m,
\alpha + 1) = \{{p}_{\xi}(m) \cap {x}_{\zeta}(m): \zeta \in \left(
  \alpha + 1\right) - F(\xi, m)\}$.  We know that $\I(G(\xi, m,
\alpha))$ is a proper ideal on ${p}_{\xi}(m)$ and it is clear that
$\I(G(\xi, m, \alpha)) = \I(G(\xi, m, \alpha + 1))$ unless
${p}_{\xi}(m) \cap {x}_{\alpha}(m) \notin \I(G(\xi, m, \alpha))$.
Suppose this is the case.  In particular, ${p}_{\xi}(m) \cap
{x}_{\alpha}(m)$ is infinite.  Since ${x}_{\alpha}(m) \subseteq p(m)$
and ${p}_{\xi}(m) \subseteq {x}_{\xi}(m)$, it follows that $m =
{k}_{n}$ for some (unique) $n$ and $\xi \in F({\zeta}_{n}, m)$.
Moreover, if $\xi < {\zeta}_{n}$, then since ${\zeta}_{n} \in \alpha -
F(\xi, m)$ and since ${x}_{\alpha}(m) \subseteq p(m) \subseteq
{p}_{{\zeta}_{n}}(m) \subseteq {x}_{{\zeta}_{n}}(m)$, we have that
${p}_{\xi}(m) \cap {x}_{\alpha}(m) \subseteq {p}_{\xi}(m) \cap
{x}_{{\zeta}_{n}}(m) \in \I(G(\xi, m, \alpha))$.  Therefore, $\xi =
{\zeta}_{n}$.  Thus we need to show that $\I(G({\zeta}_{n}, m, \alpha
+ 1))$ is a proper ideal on ${p}_{{\zeta}_{n}}(m)$.  For this it
suffices to show that $\omega - \left( {p}_{{\zeta}_{n}}(m) \cap
  {x}_{\alpha}(m)\right) \notin \I(G({\zeta}_{n}, m, \alpha))$.  Note
that since ${x}_{\alpha}(m) \subseteq p(m) \subseteq
{p}_{{\zeta}_{n}}(m)$, ${p}_{{\zeta}_{n}}(m) \cap {x}_{\alpha}(m) =
{x}_{\alpha}(m)$.  However it is clear from (8) that $\omega -
{x}_{\alpha}(m) \notin \I(G({\zeta}_{n}, m, \alpha))$ and we are done.
\end{proof}

\end{document}